\title{One-dimensional subgroups and connected components in non-abelian $p$-adic definable groups}
\author{Will Johnson and Ningyuan Yao}
\DeclareMathOperator*{\forkindep}{\raise0.2ex\hbox{\ooalign{\hidewidth$\vert$\hidewidth\cr\raise-0.9ex\hbox{$\smile$}}}}
\newcommand{\alg}{\mathrm{alg}}
\newcommand{\Ad}{\operatorname{Ad}}
\newcommand{\ad}{\operatorname{ad}}
\newcommand{\im}{\operatorname{im}}
\newcommand{\Lie}{\operatorname{Lie}}
\newcommand{\GL}{\operatorname{GL}}
\newcommand{\Gal}{\operatorname{Gal}}
\newcommand{\Aut}{\operatorname{Aut}}
\newcommand{\Th}{\operatorname{Th}}
\newcommand{\id}{\operatorname{id}}
\newcommand{\acl}{\operatorname{acl}}
\newcommand{\img}{\operatorname{im}}
\newcommand{\Inn}{\operatorname{Inn}}
\newcommand{\Qp}{\mathbb{Q}_p}
\newcommand{\pCF}{p\text{CF}}
\newtheorem{theorem}{Theorem}[section] 
\newtheorem{lemma}[theorem]{Lemma}
\newtheorem{corollary}[theorem]{Corollary}
\newtheorem{fact}[theorem]{Fact}
\newtheorem{proposition-eh}[theorem]{Proposition(?)}
\newtheorem*{theorem-star}{Theorem}
\newtheorem*{conjecture-star}{Conjecture}
\newtheorem*{lemma-star}{Lemma}
\newtheorem*{claim-star}{Claim}
\theoremstyle{definition}
\newtheorem{definition}[theorem]{Definition}
\newtheorem{remark}[theorem]{Remark}
\newtheorem{claim}[theorem]{Claim}
\newtheorem*{acknowledgment}{Acknowledgments}
\newtheorem*{warning}{Warning}
\newcommand{\Qq}{\mathbb{Q}}
\newcommand{\an}{\textrm{an}}
\newcommand{\Rr}{\mathbb{R}}
\newcommand{\Zz}{\mathbb{Z}}
\newcommand{\Kk}{\mathbb{K}}
\newcommand{\Gg}{\mathbb{G}}
\newcommand{\Mm}{\mathbb{M}}
\newcommand{\Oo}{\mathcal{O}}
\newcommand{\sq}{\subseteq}
\newenvironment{claimproof}[1][\proofname]
               {
                 \proof[#1]
                 
               }
               {
                 \endproof
               }
\begin{document}
\maketitle

\begin{abstract}
  We generalize two of our previous results on abelian definable
  groups in $p$-adically closed fields
  \cite{J-Y-Non-compact,jy-abelian} to the non-abelian case.  First,
  we show that if $G$ is a definable group that is not definably
  compact, then $G$ has a one-dimensional definable subgroup which is
  not definably compact.  This is a $p$-adic analogue of the
  Peterzil-Steinhorn theorem for o-minimal theories
  \cite{Peterzil-Steinhorn}.  Second, we show that if $G$ is a group
  definable over the standard model $\Qq_p$, then $G^0 = G^{00}$.  As
  an application, definably amenable groups over $\Qq_p$ are open
  subgroups of algebraic groups, up to finite factors.  We also prove
  that $G^0 = G^{00}$ when $G$ is a definable subgroup of a linear
  algebraic group, over any model.
\end{abstract}

\section{Introduction}

This paper continues our earlier work
\cite{J-Y-Non-compact,jy-abelian} on definable groups in the theory
$\pCF$ of $p$-adically closed fields.  We prove two main results.  The
first is as follows: 
\begin{theorem} \label{intro1}
  Let $G$ be a definable group in a $p$-adically closed field.  If $G$
  is not definably compact, then $G$ contains a one-dimensional
  definable subgroup $H$ which is not definably compact.
\end{theorem}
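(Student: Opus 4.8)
The plan is to reduce to the abelian case, which is already available: by the abelian analogue of the Peterzil--Steinhorn theorem \cite{jy-abelian}, any definable abelian group that is not definably compact contains a one-dimensional definable subgroup that is not definably compact. Consequently it is enough to produce inside $G$ \emph{any} definable abelian subgroup that is not definably compact, since the abelian result then supplies the desired $H$. I would carry out this reduction by induction on $\dim G$, the base case $\dim G = 1$ being trivial ($H = G$), using two structural facts that I take as available in $\pCF$: that definable compactness passes to definable subgroups and to definable quotients — so that an extension of definably compact groups is definably compact, and a quotient of a non-definably-compact group by a definably compact normal subgroup is again not definably compact — and that the image of a definably compact group under a definable homomorphism is definably compact.

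With these in hand the inductive step splits along the center $Z(G)$. If $Z(G)$ is not definably compact, then $Z(G)$ is an abelian non-definably-compact subgroup and we are done by the abelian case. If $Z(G)$ is definably compact but $\dim Z(G) > 0$, then $\pi \colon G \to G/Z(G)$ has non-definably-compact image and $\dim(G/Z(G)) < \dim G$; applying the inductive hypothesis to $G/Z(G)$ yields a one-dimensional non-definably-compact subgroup $\bar L \le G/Z(G)$, and $L := \pi^{-1}(\bar L)$ is then a definable subgroup of $G$ that surjects onto $\bar L$, hence is not definably compact, and on which I can recurse (its dimension is $1 + \dim Z(G)$, which is strictly below $\dim G$ unless $\dim(G/Z(G)) = 1$). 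The low-dimensional and nilpotent configurations left over — in particular when $G/Z(G)$ is itself one-dimensional — I expect to handle by the same machinery applied to the derived subgroup and to centralizers $C_G(g)$: in a class-$2$ situation each commutator map $x \mapsto [x,g]$ is a definable homomorphism into $Z(G)$, which forces the relevant centralizers to have definably compact quotients and thereby isolates an abelian non-compact section.

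The main obstacle is the remaining case, where $Z(G)$ is finite and $G$ admits no proper positive-dimensional normal definable subgroup through which to descend — the semisimple analogue. Here I would use the adjoint representation $\Ad \colon G \to \GL(\Lie(G))$, whose kernel centralizes the identity component $G^{0}$ and is therefore finite once $Z(G)$ is finite, to realize $G$ up to a finite kernel as a definable subgroup of the linear algebraic group $\GL(\Lie(G))$. This reduces the theorem to its instance for definable subgroups of linear algebraic groups over a $p$-adically closed field, which I regard as the genuine hard core and which is closely tied to the linear-group results announced in the introduction. To finish that case I would invoke the structure theory of linear algebraic groups over $\Qp$: a non-definably-compact such group cannot be anisotropic reductive, so it contains either a nontrivial one-parameter unipotent subgroup (a copy of $\mathbb{G}_a$) or a nontrivial $\Qp$-split torus (a copy of $\mathbb{G}_m$), each of which is a one-dimensional non-definably-compact subgroup; pulling this back along the finite-kernel map $\Ad$ produces the required $H \le G$. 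The delicate point, and where I expect the real work to lie, is that a \emph{definable} subgroup of $\GL_n(\Qp)$ need not be algebraic, so relating its definable non-compactness to the split-torus/unipotent dichotomy requires genuine model-theoretic input — controlling its Zariski closure and the commensurability between the definable group and an algebraic one — rather than algebraic structure theory alone.
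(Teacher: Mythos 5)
Your overall architecture --- split $G$ via the adjoint representation into a commutative-by-finite kernel plus an affine image, then attack the affine piece with the structure theory of linear algebraic groups --- is essentially the paper's strategy, but two of your steps have genuine gaps. First, your resolution of the affine case fails as stated. If $G$ is a definable, Zariski-dense, non-compact subgroup of $V(\Qp)$ and $U \cong \Gg_a$ (or a split torus) is a one-dimensional algebraic subgroup of $V$, then $G \cap U(\Qp)$ is merely an open subgroup of $U(\Qp) \cong \Qp$, which may well be a compact ball $p^n\Zz_p$; so the one-dimensional non-compact subgroup of the \emph{algebraic} group does not pull back to one of the \emph{definable} group. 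You flag this as ``where the real work lies,'' but that is precisely the content you would need to supply. The paper's fix is different: take $B$ a maximal connected split solvable subgroup of $V$, use compactness of $V(\Qp)/B(\Qp)$ to show $G/(G\cap B(\Qp))$ is compact, conclude $G \cap B(\Qp)$ is non-compact, and then induct on the derived length of $B$ until the (nearly) abelian case applies. Note also that this argument, and any appeal to local compactness or to one-parameter subgroups, only makes sense over the standard model; your proposal never addresses how to transfer the conclusion to an arbitrary $p$-adically closed field $K$. That transfer is a nontrivial step in the paper (the notion of ``special counterexample'' and the definability of definable compactness in families), and cannot be skipped.

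Second, your claim that $\ker(\Ad)$ is finite once $Z(G)$ is finite is false in the $p$-adic setting: $G$ is totally disconnected, there is no connected identity component to generate from a neighborhood of $1$, and e.g.\ for $G = \Zz_p \rtimes \{\pm 1\}$ the kernel of $\Ad$ contains the infinite open subgroup $\Zz_p$ while $Z(G)$ is trivial. All one gets is that $\ker(\Ad)$ is commutative-by-finite, so you must treat the extension $1 \to \ker(\Ad) \to G \to \im(\Ad) \to 1$ with an infinite kernel. Relatedly, the extension steps throughout your argument (preimages of one-dimensional non-compact abelian subgroups) produce groups that are compact-by-abelian rather than abelian, so the plain abelian Peterzil--Steinhorn theorem does not apply to them directly; you need the ``nearly abelian'' version (a definably compact normal subgroup with abelian quotient), which is what the cited prior work actually proves. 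Your centre-based induction also quietly applies the inductive hypothesis to the interpretable (not definable) group $G/Z(G)$, which needs justification in $\pCF$; the adjoint representation is used in the paper precisely to keep everything definable.
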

See Subsections~\ref{sec:defcom}--\ref{sec:pcf} for definitions of the
relevant terms.  The analogous statement for definable groups in
o-minimal structures is the classic Peterzil-Steinhorn theorem
\cite{Peterzil-Steinhorn}.  The abelian case of Theorem~\ref{intro1}
was the main result of \cite{J-Y-Non-compact}.

Our second main result concerns the model-theoretic connected
components $G^0$ and $G^{00}$.  Recall that if $G$ is a definable
group in a monster model of an NIP theory such as $p$CF, then the collection of definable
(resp.\@ type-definable) subgroups of finite (resp.\@ bounded) index
is bounded, and the intersection is denoted $G^0$ (resp.\@ $G^{00}$)
\cite[\S8.1]{NIPguide}.  We always have $G^{00} \subseteq G^0$, and
the inclusion can be strict.  For example, if $G$ is the circle group
in RCF, then $G^0 = G$ but $G^{00}$ is an infinitesimal neighborhood
of the identity element.  Our second main theorem shows that this
does \emph{not} happen for groups definable over $\Qp$:
\begin{theorem} \label{intro2}
  Let $\Kk$ be a highly saturated elementary extension of $\Qp$ and
  let $G$ be a $\Qp$-definable group in $\Kk$.  Then $G^0 = G^{00}$.
\end{theorem}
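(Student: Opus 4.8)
The plan is to recast the statement in topological terms and then reduce to the definably compact case. Since $G^{00}\subseteq G^0$ always holds, it suffices to prove $G^0\subseteq G^{00}$, equivalently that the compact group $\bar G:=G/G^{00}$, equipped with the logic topology, is totally disconnected (profinite). In a compact group the identity component is the intersection of all open subgroups, and the open subgroups of $\bar G$ are exactly the images of the finite-index definable subgroups of $G$; their intersection is $G^0/G^{00}$. Thus the whole theorem is the single assertion that the identity component $G^0/G^{00}$ of $\bar G$ is trivial. I would argue by contradiction: if $G^0/G^{00}\neq 1$, then it is a nontrivial connected compact group, and by Peter--Weyl it admits a nontrivial \emph{connected compact Lie group} as a continuous quotient. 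Pulling this quotient back along $G^0\to G^0/G^{00}$, one finds that a nontrivial connected compact Lie group appears as a subquotient of $\bar G$ concentrated near the identity; the whole goal is to rule this out over $\Qp$.

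The engine is the definably compact case, which I would treat as the base case. For a definably compact $\Qp$-definable group $K$, the quotient $K/K^{00}$ is a compact $p$-adic analytic group, and compact $p$-adic analytic groups are profinite, hence totally disconnected; thus $K^0=K^{00}$ and no connected Lie subquotient can occur. This is the $p$-adic analogue of Pillay's conjecture, and it is where the hypothesis that $G$ is defined over the \emph{standard} model $\Qp$ is essential: over a general $\Kk$ the value group supplies genuinely connected directions (as in the $\RCF$ circle example from the introduction, where $G^0\neq G^{00}$), whereas over $\Qp$ the residue field is finite and everything compact is profinite. To pass from the compact case to the general case I would invoke Theorem~\ref{intro1}: if $G$ is not definably compact it contains a one-dimensional subgroup $H$ that is not definably compact, and such an $H$ is abelian-by-finite, so that $H^0=H^{00}$ by the abelian case \cite{jy-abelian}. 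The idea is that these non-compact one-dimensional directions (modelled on $(\Kk,+)$ or on the valuation, as with $\Oo$ and $\Kk^\times$) contribute nothing to the identity component, so one peels them off and inductively reduces $\dim G$ until only the definably compact part---which alone governs the connected component---remains.

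The main obstacle is precisely this inductive reduction in the non-compact case: the one-dimensional subgroup $H$ furnished by Theorem~\ref{intro1} need not be normal, so there is no immediate definable quotient $G/H$ on which to induct, and passing to $G^0$ is awkward because $G^0$ is only type-definable. I expect the real work to lie in showing that $G^0/G^{00}$ is already \emph{seen} inside a definably compact open subgroup $K\leq G$---so that the local, near-identity structure on which the identity component depends is compact and therefore profinite---and in verifying that passing to such a $K$ neither creates nor destroys connected directions (the relation between $K^{00}$ and $G^{00}$ for an open, possibly infinite-index $K$ requires care, as the additive examples show). Establishing the compact base case in the non-abelian setting, and organizing the removal of the non-compact one-dimensional subgroups into a clean induction that respects $G^{00}$, are the two points I would expect to demand the most effort; the abelian results of \cite{jy-abelian} and the non-compact one-dimensional subgroups of Theorem~\ref{intro1} are the inputs that make the reduction feasible.
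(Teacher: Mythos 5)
Your opening reductions are sound and match the paper's Section~\ref{chg-sec}: $G^0=G^{00}$ is equivalent to profiniteness of $G/G^{00}$, and Peter--Weyl (via Fact~\ref{kfact}) lets one detect failure through a continuous map onto an infinite compact Lie group. But the two pillars your argument rests on are both unsupported, and the second one is a genuine gap. First, the ``base case'' that a definably compact $\Qp$-definable group $K$ has $K/K^{00}$ a compact $p$-adic analytic group is precisely the $p$-adic analogue of Pillay's conjecture (that $K^{00}$ equals the infinitesimal subgroup, so $K/K^{00}\cong K(\Qp)$). That is a substantial theorem in its own right, not a remark; you would need to prove it or cite it, and the paper deliberately does not route through it --- indeed the definably compact pieces that arise (non-split tori, $\Oo^\times$) are handled directly via $p$-adic logarithms rather than by appeal to any general compact case. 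Second, and fatally, the reduction of the non-compact case to the compact case has no mechanism. The one-dimensional non-compact subgroup $H$ supplied by Theorem~\ref{intro1} need not be normal, so there is no quotient to induct on; knowing $H^0=H^{00}$ for a non-normal $H$ gives no information about $G^0$ versus $G^{00}$ (the only transfer principle available, Corollary~\ref{2-to-3}, requires a short exact sequence). Your fallback --- that $G^0/G^{00}$ is ``seen inside'' a definably compact open subgroup --- also fails: a non-compact group need not have such a subgroup of finite index, and for infinite-index open $K\le G$ the relation between $K^{00}$ and $G^{00}$ breaks down exactly as you fear (e.g.\ $G=(\Kk,+)$ has $G^{00}=G$ while a ball $a\Oo$ has $(a\Oo)^{00}=\bigcap_n p^na\Oo$). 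You correctly identify this as the main obstacle, but identifying an obstacle is not the same as overcoming it. Note also that the paper never uses Theorem~\ref{intro1} in the proof of Theorem~\ref{intro2}; the two results are independent.

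What the paper actually does is orthogonal to a compact/non-compact dichotomy. The adjoint representation gives a $\Qp$-definable short exact sequence $1\to A\to G\to H\to 1$ with $A=\ker(\Ad)$ abelian-by-finite and $H=\im(\Ad)$ affine (Lemma~\ref{abelian-affine}). The kernel is handled by the already-known abelian case over $\Qp$ \cite[Theorem~4.2]{jy-abelian}. For the affine image, one passes to the analytic expansion $\Qq_{p,\an}$, uses the compact-group machinery (Corollary~\ref{to-abelian}) to reduce to an abelian definable subgroup of a linear algebraic group, decomposes the connected abelian linear algebraic group into layers of type $\Gg_a$, $\Gg_m$, and irreducible non-split tori (Fact~\ref{decompose}), and treats each layer by classifying definable subgroups of $(\Kk^n,+)$ and by using $p$-adic logarithms to transport the torus cases into the additive case; the result transfers back to $p$CF by Corollary~\ref{reducts}. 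The pieces are then reassembled with Corollary~\ref{2-to-3}. If you want to salvage your strategy, the missing ingredient is a normal series with $\Qp$-definable (or at least interpretable) quotients on which to run your induction --- which is exactly what the adjoint representation and Fact~\ref{decompose} provide.
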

We previously proved the abelian case in
\cite[Theorem~4.2]{jy-abelian}.  The main goal of the current note,
then, is to deal with the non-abelian cases of Theorems~\ref{intro1}
and \ref{intro2}.

In the course of proving Theorem~\ref{intro2}, we need to prove the
following variant, which is interesting in its own right:
\begin{theorem} \label{intro3}
  Let $\Kk$ be a highly saturated $p$-adically closed field, let $G$
  be a linear algebraic group over $\Kk$, and let $H \subseteq G(\Kk)$
  be a definable subgroup.  Then $H^0 = H^{00}$.
\end{theorem}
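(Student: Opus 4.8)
The plan is to prove the statement by induction on $\dim G$ (Zariski dimension), using an extension principle to reduce to ``atomic'' cases, which will be either commutative (covered by the cited abelian result) or almost simple. Before anything else I would record the \emph{extension lemma}: for a short exact sequence of definable groups $1 \to N \to H \xrightarrow{\pi} Q \to 1$ with $N \trianglelefteq H$, if $N^0 = N^{00}$ and $Q^0 = Q^{00}$ then $H^0 = H^{00}$. The cleanest proof passes to the logic topology, where $H/H^{00}$ is a compact Hausdorff group whose identity component is exactly $H^0/H^{00}$; thus $H^0 = H^{00}$ is equivalent to $H/H^{00}$ being totally disconnected. Using the standard fact $\pi(H^{00}) = Q^{00}$, one gets a short exact sequence of compact groups
\[
1 \to N/(N \cap H^{00}) \to H/H^{00} \to Q/Q^{00} \to 1 .
\]
Here $Q/Q^{00}$ is totally disconnected by hypothesis, and since $N^{00} \subseteq N \cap H^{00}$ (because $N \cap H^{00}$ is a bounded-index type-definable subgroup of $N$), the kernel $N/(N \cap H^{00})$ is a continuous quotient of the totally disconnected group $N/N^{00} = N/N^0$, hence itself totally disconnected. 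An extension of a totally disconnected compact group by a totally disconnected compact group is totally disconnected (the identity component maps trivially into the quotient, so it lies in the totally disconnected kernel and is therefore trivial), so $H/H^{00}$ is totally disconnected and $H^0 = H^{00}$.

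Next comes the reduction. Replacing $G$ by the Zariski closure $\overline{H}$, I may assume $H$ is Zariski dense in $G$. If $G$ admits a connected normal algebraic subgroup $M$ with $0 < \dim M < \dim G$, I set $N = H \cap M(\Kk)$ and $Q = \pi(H) \le (G/M)(\Kk)$ for the quotient $\pi \colon G \to G/M$; then $N$ is a definable subgroup of the smaller group $M(\Kk)$ and $Q$ is a definable subgroup of $(G/M)(\Kk)$, so both satisfy $N^0 = N^{00}$ and $Q^0 = Q^{00}$ by induction, and the extension lemma gives $H^0 = H^{00}$. (One technical point is that $M$ may only be defined over a finite extension of $\Kk$, which is handled by choosing a $\Gal$-stable subgroup, e.g.\ working with the almost-direct-product decomposition of the semisimple part whose factors are permuted by the Galois action.) Since any connected algebraic group that is neither commutative nor almost simple has such an intermediate connected normal subgroup---a term of its derived series, its unipotent radical, its radical, or a proper subproduct of its almost-simple factors---the induction reduces us to two atomic cases: $G$ commutative, and $G$ almost simple. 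The commutative case is exactly the cited abelian result, and for $G$ almost simple I apply the extension lemma to the finite center $Z$ (for which $Z^0 = Z^{00}$ holds trivially) to reduce to $G$ of adjoint type.

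The remaining and hardest case is $G$ simple with $H \le G(\Kk)$ a Zariski-dense definable subgroup; being Zariski dense, $H$ has full dimension and is therefore open in $G(\Kk)$. For each root subgroup $U_\alpha \cong (\Kk,+)$ of $G$, the intersection $H \cap U_\alpha$ is an open definable, hence abelian, subgroup, so the cited abelian result applies; and since $H^{00} \cap U_\alpha$ is a bounded-index type-definable subgroup of $H \cap U_\alpha$, we get $(H \cap U_\alpha)^{00} \subseteq H^{00}$. The goal is then to show that these pieces, together with the infinitesimal structure of $H$, already exhaust $H^0$. This separates into the noncompact directions---where $H \cap U_\alpha$ contains a divisible line and so contributes the full $U_\alpha$ to $H^{00} = H^0$---and the compact directions, which are governed by the finite congruence quotients $G(\Oo) \to G(\Oo/p^n)$ together with the observation that the infinitesimal congruence subgroup is assembled, via the Baker--Campbell--Hausdorff map, out of copies of the divisible group $(\mu,+)$ with $\mu = \bigcap_n p^n\Oo$, which therefore has no proper bounded-index type-definable subgroup.

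I expect the main obstacle to be precisely this simple case: showing uniformly, for an \emph{arbitrary} open definable subgroup $H$ of a simple $p$-adic group, that the compact group $H/H^{00}$ is totally disconnected. The difficulty is to reconcile the compact and noncompact directions in a single argument and to prove that the infinitesimal (congruence) part of $H$ admits no proper bounded-index type-definable subgroup. This is where the divisibility of $(\mu,+)$ and the $p$-adically-closed-field fact that the residue rings $\Oo/p^n\Oo$ are finite do the essential work, in contrast with the real-closed case, where the analogous quotient is a connected torus and $H^0 = H^{00}$ genuinely fails.
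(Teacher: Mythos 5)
Your extension lemma is correct and is exactly the paper's Corollary~\ref{2-to-3}, and the d\'evissage along normal algebraic subgroups parallels what the paper does (for abelian $V$) in Theorem~\ref{qpan-aff}. But there are two genuine gaps, one in each of your ``atomic'' cases.

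First, the commutative case is \emph{not} ``exactly the cited abelian result.'' The abelian theorem from the earlier paper (\cite[Theorem~4.2]{jy-abelian}) applies only to groups definable over the standard model $\Qq_p$, whereas the groups produced by your induction --- $H \cap M(\Kk)$, images in $(G/M)(\Kk)$, and ultimately definable subgroups of $\Gg_a$, $\Gg_m$, or a non-split torus over $\Kk$ --- are definable with arbitrary parameters from the saturated model (e.g.\ $a\Oo \subseteq (\Kk,+)$ for nonstandard $a$). This is precisely the point of the Warning in Section~\ref{chg-sec}, and closing this gap is the bulk of the paper's work: one passes to the analytic expansion $\Qq_{p,\an}$ to get $p$-adic logarithms, classifies all definable subgroups of $(\Kk^n,+)$ as direct sums of copies of $0$, $\Oo$, $\Kk$ after a linear change of basis (Lemma~\ref{subs-of-K^n}), uses the logarithm to reduce $\Gg_m$ and non-split tori to the additive case (Theorems~\ref{mult-g00} and \ref{torus-g00}), and then transfers back to the ring language via Corollary~\ref{reducts}. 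None of this is supplied by your citation, so your base case is unproved.

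Second, your almost-simple case is a sketch, and you say yourself it is the main obstacle. The paper shows this case can be avoided entirely: by compact Hausdorff group theory (Corollary~\ref{to-abelian}), if $H/H^{00}$ is not profinite there is a continuous surjection onto an infinite subgroup of some $\mathrm{O}(n)$, which contains a non-torsion element $f(g)$; taking $A$ to be the center of the centralizer of $g$ gives an \emph{abelian} definable subgroup $A \subseteq H$ with $A^0 \neq A^{00}$, and $A$ is still affine. So the whole theorem reduces to the abelian affine case, and no analysis of root subgroups, congruence filtrations, or Baker--Campbell--Hausdorff is needed. If you want to salvage your outline, replace the commutative/almost-simple dichotomy by this single reduction to abelian subgroups, and then supply the classification-plus-logarithm argument for abelian affine groups over the saturated model.
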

The assumption that $G$ is a \emph{linear} algebraic group is
essential; Onshuus and Pillay show that $E(\Kk)^0 \ne E(\Kk)^{00}$ for
certain elliptic curves \cite[Proposition~3.7]{O-P}.

As in \cite{jy-abelian}, Theorem~\ref{intro2} implies the following
weak classification of definably amenable groups over $\Qp$:
\begin{theorem}
  Let $G$ be a definably amenable group defined in $\Qp$.  There is a
  finite index definable subgroup $E \subseteq G$ and a finite normal
  subgroup $F \lhd E$ such that the quotient $E/F$ is isomorphic to an
  open subgroup of an algebraic group over $\Qp$.
\end{theorem}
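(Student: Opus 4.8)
The plan is to follow the strategy of the abelian case \cite{jy-abelian}, using Theorem~\ref{intro2} to supply the equality $G^0 = G^{00}$ and using the hypothesis of definable amenability to replace the invariant measure that commutativity provided for free there (every abelian group being amenable). First I would recall the general NIP theory of definably amenable groups \cite[\S8]{NIPguide}: since $G$ is definably amenable, it carries a left-invariant global Keisler measure, $G^{00}$ exists, and $G/G^{00}$ is a compact Hausdorff topological group in the logic topology. Applying Theorem~\ref{intro2} gives $G^0 = G^{00}$, so that $G/G^{00} = G/G^0 = \varprojlim_{H} G/H$, the inverse limit over definable finite-index subgroups $H \leq G$; hence $G/G^{00}$ is profinite, and in particular totally disconnected. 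This is the $p$-adic counterpart of the absence of the RCF ``circle'' phenomenon: $G$ has no connected compact Lie part, and this is precisely what will force the eventual algebraic group to be matched by $G$ up to an open, finite-index, finite-kernel correspondence (note that an open subgroup of an algebraic group may itself be compact, e.g.\@ $\Zz_p \leq \Gg_a(\Qp)$, consistently with $G/G^{00}$ being profinite).

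Next I would invoke the structure theory of groups definable over $\Qp$. By Onshuus--Pillay \cite{O-P}, such a $G$ carries a definable $p$-adic analytic group structure and is, near the identity, locally isomorphic to the $\Qp$-points of an algebraic group through its Lie algebra. The task is then to promote this local analytic picture to a global one: to produce an algebraic group $H$ over $\Qp$, a finite-index definable subgroup $E \leq G$, and a finite normal $F \lhd E$ with $E/F$ definably isomorphic to an open subgroup of $H(\Qp)$. Here I would run the dévissage template of \cite{jy-abelian}, passing to a suitable finite-index definable subgroup (one cannot use $G^0$, which is generally not of finite index --- already $(\Zz_p,+)^0$ is trivial), analyzing the commutator subgroup and the solvable radical, disposing of the abelian subquotients by the already-established abelian classification, and reassembling. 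At each stage the profiniteness of $G/G^{00}$ guarantees that the surviving ``compact directions'' are totally disconnected and hence impose no obstruction to algebraicity, so that the homomorphism $E \to H(\Qp)$ ultimately produced has open image and finite kernel $F$.

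The main obstacle is exactly the step that commutativity trivialized in \cite{jy-abelian}. In the abelian case one linearizes: an abelian $p$-adic analytic group splits, up to finite factors, into a profinite part and an open subgroup of a $\Qp$-vector group, and $A^0 = A^{00}$ controls this splitting directly. No such global linearization is available in the non-abelian case, so one must instead control the interaction between the reductive quotient and the radical, track how the finite kernels $F$ and the finite indices $[G:E]$ accumulate through the dévissage, and --- the most delicate point --- verify that definable amenability is inherited by each relevant subquotient (it passes readily to definable quotients and to finite-index subgroups, but its behaviour along an arbitrary definable normal subgroup such as the radical requires genuine justification). I expect the bulk of the new work beyond Theorem~\ref{intro2} to lie in this inheritance check and in reassembling the algebraic groups obtained for the subquotients into a single algebraic envelope $H$ for $E/F$, together with the verification that the resulting map is open.
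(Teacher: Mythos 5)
Your opening move coincides with the paper's: Theorem~\ref{intro2} (i.e.\ Theorem~\ref{main-thm-2}(2)) gives $G^0 = G^{00}$, hence $G/G^{00}$ is profinite. But everything after that point is where the content of the statement actually lives, and your proposal does not supply it. The paper's proof is a one-line citation of \cite[Corollary~4.3]{jy-abelian}: the classification of definably amenable groups over $\Qp$ had already been \emph{reduced} to the single assertion that $G^0 = G^{00}$ for $\Qp$-definable groups (that reduction is what Corollary~4.3 of \cite{jy-abelian} packages), so once the non-abelian case of that assertion is in hand the corollary follows verbatim, with no new d\'evissage. You instead propose to rebuild the globalization from scratch --- local algebraic structure near the identity, then a d\'evissage through the solvable radical and the reductive quotient --- and you explicitly leave the two hardest steps unproven: (i) that definable amenability is inherited by the relevant subquotients, and (ii) that the algebraic envelopes obtained for the subquotients can be reassembled into a single algebraic group $H$ with $E/F$ isomorphic to an open subgroup of $H(\Qp)$. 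Step (ii) is a genuine obstruction, not bookkeeping: an extension, formed in the definable category, of an open subgroup of an algebraic group by another such group is not automatically an open subgroup of an algebraic group, and controlling exactly this is what the reduction in \cite{jy-abelian} exists to handle. As written, your argument establishes only the profiniteness of $G/G^{00}$, which is necessary but far from sufficient.

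Two smaller points. The local algebraicity of groups definable in $\Qp$ that you invoke is due to Hrushovski and Pillay \cite{Hru-Pil} (see also \cite{Pillay-G-in-p}), not to \cite{O-P}, which this paper cites only for the elliptic curves with $E^0 \ne E^{00}$. And the claim that profiniteness of $G/G^{00}$ ``imposes no obstruction to algebraicity'' is an expectation, not an argument; the role that $G^0 = G^{00}$ plays in the actual reduction is more specific than excluding a connected compact Lie quotient.
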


\begin{remark}
  Tracing through the proofs of Theorems~\ref{intro2}--\ref{intro3},
  one can see that in $p$-adically closed fields, instances of $G^0
  \ne G^{00}$ must be built out of primitive instances $G$ for which
  (1) $G$ is abelian, (2) $G$ is \emph{not} $\Qq_p$-definable, and (3)
  $G$ is \emph{not} a definable subgroup of a linear algebraic group.
  These constraints might be strong enough that one could classify all
  such groups, perhaps using the techniques of \cite{acosta}.  This
  would lead to a better understanding of the structure of $G/G^{00}$
  for general definable groups $G$ in $p$CF.
\end{remark}

\begin{remark}
  The term ``$p$-adically closed field'' is often used for the more
  general class of fields elementarily equivalent to finite extensions
  $K/\Qp$.  All of our theorems generalize to this broader context.
  For simplicity we will only consider the case of $\Th(\Qp)$.  In
  most cases, the proofs generalize with minimal changes.  We leave
  the details as an exercise to the reader.
  \emph{However}, in Section~\ref{sec:last}, some of the intermediate lemmas
  fail to generalize, as explained in Remark~\ref{start-problem}.
  Nevertheless, the main theorems \emph{do} successfully generalize,
  for reasons explained in Remark~\ref{finite-extension}.
\end{remark}


\subsection{Notation and conventions}
Let $\mathcal{L}$ be a first-order language and $M$ be an $\mathcal{L}$-structure. The letters $x, y, z$ will denote finite tuples of variables, and $a, b, c$ will denote finite tuples from $M$.  For a subset $A$ of $M$, $\mathcal{L}_A$ is the language obtained from $\mathcal{L}$ by adjoining constants for elements of $A$. For an $\mathcal{L}_M$-formula $\phi(x)$, $\phi(M)$ denotes the definable subset of $M^{|x|}$ defined by $\phi$. A set $X$ is definable in $M$ if there is an $\mathcal{L}_M$-formula $\phi(x)$ such that $X = \phi(M)$.
If $M\prec N$, and $X\sq M^n$ is defined by a formula $\psi$ with parameters from $M$, then
  $X(N)$ will denote the definable set $\psi(N)$.  We will distinguish between definable and interpretable in the current paper.

\subsection{Outline} 
In Section~\ref{sec:prelim}, we review the notions of definable
compactness, $p$-adic definable groups, and $p$-adic algebraic
groups, as well as some useful tools. In Section~\ref{sec:old2}, we prove Theorem~\ref{intro1}, the
$p$-adic Peterzil-Steinhorn theorem.  In Section~\ref{chg-sec} we
collect some useful information on compact Hausdorff groups and apply it to the groups $G/G^{00}$.
Finally, in Section~\ref{sec:last} we use this machinery to prove
Theorems~\ref{intro2} and \ref{intro3}.

\section{Preliminaries} \label{sec:prelim}

\subsection{Definable Compactness} \label{sec:defcom}

We recall some notions from \cite{wj-o-minimal}.  Let $M$ be an arbitrary structure. A {\em definable topology} on a definable set $X  \sq M^n$ is a
topology with a (uniformly) definable basis of opens. A {\em definable topological space} is a
definable set with a definable topology. A definable topological space $X$ is {\em definably compact} if for any definable family ${\cal F}=\{Y_t \mid t\in T\}$ of non-empty closed sets $Y_t\sq  X$, if ${\cal F}$ is downwards directed, then $\bigcap {\cal F}\neq \emptyset$.  A definable subset $D \subseteq X$ is \emph{definably compact} if it is definably compact as a subspace.

\begin{fact}[{\cite[Section~3.1]{wj-o-minimal}}]
\begin{enumerate}
\item If $X$ is a compact definable topological space, then $X$ is definably compact.
\item If $f :X\to Y$ is definable and continuous, and $X$ is definably compact, then the
image $f(X)\sq Y$ is definably compact.
\item If $X$ is definably compact and $Y\sq X$ is closed and definable, then $Y$ is definably
compact.
\item If $X$ is Hausdorff and $Y \subseteq X$ is definable and definably compact, then $Y$ is closed.
\item If $X_1, X_2$ are definably compact spaces, then $X_1 \times X_2$ is definably compact.
\item If $X$ is a definable topological space and $Y_1, Y_2\sq X$ are definably compact, then
$Y_1\cup Y_2$ is definably compact.
\end{enumerate}
\end{fact}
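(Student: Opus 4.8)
The common thread is that definable compactness concerns downward-directed definable families of closed sets, so each part is proved by transporting such a family between spaces. I would dispatch parts (1)--(3) first, as they are essentially immediate. For (1), a downward-directed family has the finite intersection property (directedness places a single member inside any finite sub-intersection, and members are nonempty), so ordinary topological compactness already forces the total intersection to be nonempty. For (2), given a downward-directed definable family $\{Z_t\}$ of nonempty closed subsets of $f(X)$, the preimages $\{f^{-1}(Z_t)\}$ are closed (by continuity), nonempty, definable, and still downward-directed; definable compactness of $X$ yields a point in their intersection, whose image lies in $\bigcap_t Z_t$. For (3), a closed subset of the closed definable set $Y$ is already closed in $X$, so any witnessing family for $Y$ is a witnessing family for $X$, and definable compactness of $X$ applies verbatim.

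Part (4) is the first that requires a genuine idea. To show $Y$ is closed I would fix $x\notin Y$ and consider the definable family $\{\,\overline{B}\cap Y : B \text{ a basic open with } x\in B\,\}$, where $\overline{(\cdot)}$ denotes topological closure. Its members are closed subsets of $Y$, and the family is downward directed: for basic opens $B_1,B_2\ni x$ there is a basic $B_3\ni x$ with $B_3\subseteq B_1\cap B_2$, whence $\overline{B_3}\cap Y\subseteq(\overline{B_1}\cap Y)\cap(\overline{B_2}\cap Y)$. If some member is empty, the corresponding $B$ is a neighborhood of $x$ missing $Y$, so $x\notin\overline{Y}$. Otherwise definable compactness of $Y$ produces a point $y\in Y$ lying in $\overline{B}$ for every basic $B\ni x$; Hausdorffness then forces $y=x$ (any $y\neq x$ is separated from $x$ by disjoint opens $U\ni x$, $V\ni y$, and for a basic $B$ with $x\in B\subseteq U$ the open $V$ witnesses $y\notin\overline{B}$), contradicting $x\notin Y$. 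One point worth recording is that closure is a definable operation here: $z\in\overline{D}$ iff every basic open containing $z$ meets $D$, a first-order condition over the uniformly definable basis, so $\{\overline{B}\cap Y\}$ is genuinely a definable family.

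For part (6) I would use a cofinality dichotomy. Given a downward-directed definable family $\{Z_t\}$ of nonempty closed subsets of $Y_1\cup Y_2$, either the definable set $\{t : Z_t\cap Y_1\neq\emptyset\}$ is downward cofinal in $\{Z_t\}$, or it is not. In the first case the traces $\{Z_t\cap Y_1\}$ over that cofinal set form a downward-directed definable family of nonempty closed subsets of $Y_1$, so definable compactness of $Y_1$ gives a common point, which by cofinality lies in $\bigcap_t Z_t$. In the second case there is some $t_0$ below which every $Z_t$ misses $Y_1$, hence lies in $Y_2$; then the cofinal subfamily $\{Z_t : Z_t\subseteq Z_{t_0}\}$ is a downward-directed definable family of nonempty closed subsets of $Y_2$, and definable compactness of $Y_2$ finishes.

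Part (5) is the main obstacle, being a definable Tychonoff statement for a binary product. My plan is to prove a definable tube lemma: if $X_2$ is definably compact and $Z\subseteq X_1\times X_2$ is closed and definable, then $\pi_1(Z)$ is closed in $X_1$. Granting this, the family $\{\pi_1(Z_t)\}$ attached to a downward-directed family $\{Z_t\}$ of nonempty closed subsets of $X_1\times X_2$ is a downward-directed definable family of nonempty \emph{closed} subsets of $X_1$; definable compactness of $X_1$ yields $a\in\bigcap_t\pi_1(Z_t)$, the fibers $\{b : (a,b)\in Z_t\}$ then form a downward-directed definable family of nonempty closed subsets of $X_2$, and definable compactness of $X_2$ produces $b$ with $(a,b)\in\bigcap_t Z_t$. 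The delicate step is the tube lemma itself: covering the slice $\{a\}\times X_2$ by basic product-opens disjoint from $Z$ and extracting a uniform neighborhood of $a$ requires passing between the directed-intersection formulation of definable compactness and an open-cover formulation, while keeping the covering family definable. I expect this translation, rather than any of the set-theoretic manipulations above, to be where the real work lies; it is precisely the package of results established in \cite{wj-o-minimal}, which is why we cite that reference rather than reproving it here.
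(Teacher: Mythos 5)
The paper does not prove this Fact at all---it is quoted verbatim from \cite[Section~3.1]{wj-o-minimal}---so there is no in-paper argument to compare against; what can be assessed is whether your reconstruction is sound. Parts (1)--(4) and (6) are correct and complete as written: the finite-intersection-property observation for (1), the preimage transport for (2), the ``closed in $Y$ implies closed in $X$'' transport for (3), the directed family of sets $\overline{B}\cap Y$ for (4) (together with the correct remark that closure is a uniformly definable operation over the definable basis), and the cofinality dichotomy for (6) all go through; in particular your case split in (6) correctly handles the point that the trace family $\{Z_t\cap Y_1\}$ is only directed when the relevant index set is cofinal.

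The one step you leave open---the definable tube lemma in part (5)---is genuinely needed for your argument, but your worry that it forces a detour through an open-cover formulation of definable compactness is unfounded: it can be proved directly inside the directed-family framework. Given $Z\subseteq X_1\times X_2$ closed and definable with $X_2$ definably compact, and $a\in\overline{\pi_1(Z)}$, consider the family
\[
F_B \;=\; \overline{\pi_2\bigl(Z\cap(B\times X_2)\bigr)} \qquad (B \text{ a basic open of } X_1 \text{ with } a\in B),
\]
indexed by the definable set of basic opens containing $a$. Each $F_B$ is a nonempty closed definable subset of $X_2$ (nonempty because $a\in\overline{\pi_1(Z)}$ forces $B\cap\pi_1(Z)\neq\emptyset$), and the family is downward directed since the basis is. Definable compactness of $X_2$ yields $b\in\bigcap_B F_B$, and then $(a,b)\in Z$: otherwise some basic box $B\times C$ around $(a,b)$ misses $Z$, whence $C$ witnesses $b\notin F_B$. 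Thus $a\in\pi_1(Z)$ and $\pi_1(Z)$ is closed. With this inserted, your proof of (5) is complete, and the whole proposal stands as a self-contained verification of the cited Fact.
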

\begin{remark}\label{rmk-definably-compact}
Suppose $X$ is a definable topological space in a structure $M$, and $N \succ M$.
Then $X(N)$ is naturally a definable topological space in the structure $N$, and $X(N)$
is definably compact if and only if $X$ is definably compact. In other words, definable
compactness is invariant in elementary extensions.
\end{remark}

\subsection{$\pCF$ and definable groups} \label{sec:pcf}
 Let $p$ be a prime and $\Qp$ the field of $p$-adic numbers. We call the complete theory of $\Qp$, in the language of rings, the theory of \emph{$p$-adically closed fields}, written $\pCF$. For any $K\models \pCF$,  $\Oo(K)$ will denote the valuation ring and
$\Gamma_K$ will denote the value group, which is an elementary extension of $(\mathbb{Z},+,<)$. Let $v: K\to\Gamma_K\cup\{\infty\}$ be the valuation map and
\[
B(a, \alpha) = \{x \in \Qp \mid  v(x-a)\geq \alpha\}
\]
for $a\in K$ and $\alpha \in\Gamma_K\cup\{\infty\}$.  Then $K$ is topological field with basis given by the sets $B(a, \alpha)$. The $p$-adic field $\Qp$ is locally compact.  We call $X\sq K$ \emph{bounded} if there is $\alpha\in\Gamma_K$ such that $X$ is a subset of some $n$-dimensional ball $B(0, \alpha)^n$. 
\begin{fact}[{\cite[Lemmas~2.4--2.5]{J-Y-Non-compact}}]
    Let  $X$  be a definable subset of $K^n$.  Then $X$ is definably compact iff $X$ is closed and bounded.
\end{fact}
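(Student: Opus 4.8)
The plan is to prove the two directions separately, establishing necessity (definably compact $\Rightarrow$ closed and bounded) directly inside $K$, and reducing sufficiency (closed and bounded $\Rightarrow$ definably compact) to the standard model $\Qp$, where genuine topological compactness is available.

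First I would note that $K^n$ with its valuation topology is a Hausdorff definable topological space (the balls $B(a,\alpha)$ form a uniformly definable basis). Hence if $X$ is definably compact, then $X$ is closed by part~(4) of the itemized Fact on definable compactness. It remains to show that $X$ is bounded, and here I argue by contraposition. If $X$ is unbounded, then for every $\alpha \in \Gamma_K$ there is $x \in X$ with $v(x_i) < \alpha$ for some coordinate $i$. Indexing by $t \in K^\times$, set
\[
Y_t = \{x \in X : v(x_i) \le v(t) \text{ for some } i\}.
\]
Each set $\{x : v(x_i) \le v(t)\}$ is clopen, since its complement is the open ball $\{x : v(x_i) > v(t)\}$; so each $Y_t$ is a closed subset of $X$. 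Unboundedness makes each $Y_t$ nonempty, and the family is totally ordered by inclusion via $v(t)$, hence downward directed. Finally $\bigcap_t Y_t = \emptyset$, since a point $x$ in the intersection would satisfy $\min_i v(x_i) \le v(t)$ for all $t \in K^\times$, impossible because $\Gamma_K \equiv (\Zz,+,<)$ has no least element. This contradicts definable compactness, so $X$ is bounded.

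For the converse, suppose $X$ is closed and bounded, say $X = \phi(K;b)$ for a formula $\phi(x;y)$ and parameter $b$. The key point is that definable compactness transfers from the standard model. Closedness, boundedness, and --- for each formula $\theta(x;z,w)$ that could define a family --- the corresponding instance of definable compactness are each expressible by a first-order condition on $y$, call these $\chi_{\mathrm{cl}}(y)$, $\chi_{\mathrm{bd}}(y)$, and $\chi_\theta(y)$. It therefore suffices to show, for every $\theta$, that
\[
\pCF \vdash \forall y\, \bigl(\chi_{\mathrm{cl}}(y) \wedge \chi_{\mathrm{bd}}(y) \to \chi_\theta(y)\bigr).
\]
Because $\pCF = \Th(\Qp)$ is complete, I may verify this in $\Qp$. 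There a closed and bounded definable set is a closed subset of some $(p^{-k}\Oo(\Qp))^n = (p^{-k}\Zz_p)^n$, which is compact, and a closed subset of a compact set is compact; by part~(1) of the Fact it is then definably compact, so $\chi_\theta$ holds of every parameter in $\Qp$ satisfying $\chi_{\mathrm{cl}} \wedge \chi_{\mathrm{bd}}$. This yields the displayed implication, and hence $\chi_\theta(b)$ holds in $K$ for all $\theta$; that is, $X$ is definably compact.

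The main obstacle is the converse direction, and specifically the bookkeeping behind the reduction to $\Qp$: one must check that definable compactness, although an infinite conjunction ranging over all definable families, decomposes into individual first-order instances $\chi_\theta$ that transfer along elementary equivalence, so that completeness of $\pCF$ can be invoked. The genuinely topological input --- Heine--Borel for $\Qp^n$, resting on compactness of $\Zz_p$ --- is thereby confined to the standard model, where it is an elementary fact.
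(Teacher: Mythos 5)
Your argument is correct, but note that the paper does not actually prove this statement: it is imported as a Fact from \cite[Lemmas~2.4--2.5]{J-Y-Non-compact}, so there is no in-paper proof to compare against, and your write-up is a legitimate self-contained substitute. Both directions check out. For necessity, closedness follows from part (4) of the quoted Fact since $K^n$ is Hausdorff, and your family $Y_t$ is genuinely a uniformly definable, downward-directed (indeed totally ordered) family of nonempty relatively closed subsets of an unbounded $X$ with empty intersection, the emptiness resting correctly on $\Gamma_K\equiv(\Zz,+,<)$ having no least element (the case $\min_i v(x_i)=\infty$ is also excluded, since such an $x$ lies in no $Y_t$). For sufficiency, the one point that needed care --- that each instance of definable compactness for a fixed family formula $\theta$ is a single first-order condition $\chi_\theta(y)$ on the parameter --- holds because ``$\bigcap\mathcal{F}\neq\emptyset$'' is just $\exists x\,\forall t\in T\,(x\in Y_t)$, and closedness, directedness, and nonemptiness of the members are likewise elementary; this is the same mechanism underlying the paper's remark that definable compactness is invariant under elementary extensions. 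Granting that, completeness of $\pCF$ plus Heine--Borel for $\Qq_p^n$ (compactness of $p^{-k}\Zz_p$) and part (1) of the quoted Fact finish the job. Your route confines all genuine topology to the standard model and replaces whatever machinery the cited source uses (it works with characterizations of definable compactness developed there) by a purely logical transfer argument, which is arguably more elementary, at the cost of the formula-by-formula bookkeeping you rightly flag as the main obstacle.
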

 An $n$-dimensional \emph{definable $C^k$-manifold} over $K$ is a Hausdorff definable topological
space $X$  with a finite covering by open sets each homeomorphic to an open definable subset of $K^{n}$ with transition maps definable and $C^{k}$. 


By a \emph{definable group} over $K$, we mean a definable set with a definable group operation.  By adapting the methods of \cite{Pillay-G-in-p}
one sees that for any group $G$ definable in $K$ and for any $k<\omega$, $G$ can be definably equipped with the structure of a definable $C^{k}$-manifold in $K$ with respect to which the group structure is $C^{k}$.  Moreover, this $C^k$-manifold structure is unique.
We will always use this manifold structure when making topological statements about $G$.  For example, $G$ is ``definably compact'' if it is definably compact with respect to this $C^k$-manifold structure.

As observed in Proposition 2.1 of \cite{Hru-Pil}, $p$-adically closed fields are \emph{geometric fields}, in the sense that (1) they have uniform finiteness and (2) model-theoretic algebraic closure agrees with field-theoretic algebraic closure:
\begin{equation*}
  a \in \acl(F) \iff a \in F^{\alg}.
\end{equation*}
Consequently, there is a sensible dimension theory for definable sets.
Assuming $X\sq K^n$ is definable over a set $A$,  then the dimension $\dim(X)$ can be described as the maximum of  $\dim(\bar a/A)$  as $\bar a$ ranges over points of $X(N)$, where $N$ is an $|A|^+$-saturated elementary extension of $K$. The dimension $\dim(X)$ coincides with the algebro-geometric dimension of the Zariski closure of $X$.

\subsection{Algebraic Groups}

Let $K$ be a $p$-adically closed field and $L$ be an algebraically closed field containing $K$. Let $G$ be an algebraic group definable in $L$, with parameters from $K$,  which means that the variety structure as well as the group structure are given by data over $K$.  (See \cite{Pillay-ACF} for more details.) Then $G$ and its group operation are defined by quantifier-free formulas over $K$ in the language of rings. The $K$-points $G(K)$ of the algebraic group $G$ is of course a definable group in $K$. By a ``definable subgroup of an algebraic group,'' we mean a definable subgroup of $G(K)$ for some algebraic group $G$.


\subsection{Centralizer-connected groups}
Let $(G,\cdot)$ be a definable group in a $p$-adically closed field $K$.
The folowing definition is standard:
\begin{definition}
  $G$ is \emph{centralizer-connected} if there is no $a \in G$ such
  that the centralizer $Z_G(a)$ is a proper subgroup of $G$ of finite
  index.
\end{definition}
The proofs of the next two theorems are variants of the proof of
\cite[Proposition~2.3]{P-Y-commu-by-fin}.
\begin{theorem} \label{cc-g0}
  Let $G'$ be the intersection of all finite-index centralizers in
  $G$.  Then $G'$ is a definable subgroup of finite index in $G$.
  Moreover, $G'$ is centralizer-connected.
\end{theorem}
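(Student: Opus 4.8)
The plan is to identify $G'$ with the centralizer of a single definable subgroup, namely the FC-center $D := \{a \in G : [G : Z_G(a)] < \infty\}$, and then to establish definability and finite index separately. Observe first that $G' = \bigcap_{a \in D} Z_G(a) = Z_G(D)$, so once $D$ is known to be definable, $G' = \{g \in G : ga = ag \text{ for all } a \in D\}$ is visibly definable as well. Thus the theorem reduces to two points: (i) $D$ is a definable subgroup, and (ii) $Z_G(D)$ has finite index.

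For (i), the main tool is the uniform finiteness of $\pCF$ (it is a geometric field). The conjugacy classes $C_a := \{g a g^{-1} : g \in G\}$ form a definable family of subsets of $G$, and by the orbit--stabilizer count $|C_a| = [G : Z_G(a)]$. By uniform finiteness there is a single integer $M$ with $|C_a| \le M$ whenever $C_a$ is finite; equivalently, every finite-index centralizer has index at most $M$. Hence $D = \{a : [G : Z_G(a)] \le M\}$, which is definable since ``index $\le M$'' is a first-order condition on $a$. That $D$ is a subgroup is routine: $Z_G(a^{-1}) = Z_G(a)$, and $[G : Z_G(ab)] \le [G : Z_G(a) \cap Z_G(b)] \le M^2 < \infty$, so $D$ is closed under products and inverses.

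For (ii), I expect the main obstacle. A naive approach fails: an intersection of infinitely many finite-index subgroups need not be finite index (as the chain $p^n \Zz_p$ in $(\Qp,+)$ shows), so I cannot simply descend through a chain of centralizers. Instead I will combine two finiteness inputs. Since $\pCF$ is NIP, the Baldwin--Saxl condition gives a bound $N$ such that every finite intersection $\bigcap_{a \in S} Z_G(a)$ equals a subintersection over some $S_0 \subseteq S$ with $|S_0| \le N$. Applying uniform finiteness to the definable family whose $\bar a = (a_1,\dots,a_N)$-th member is $\{(g a_1 g^{-1}, \dots, g a_N g^{-1}) : g \in G\}$ --- a set of size $[G : \bigcap_{i=1}^N Z_G(a_i)]$ --- I obtain a uniform bound $M'$ on $[G : Z_G(S_0)]$ for all $S_0 \subseteq D$ with $|S_0| \le N$. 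Together with Baldwin--Saxl, this bounds $[G : Z_G(S)]$ by $M'$ for \emph{every} finite $S \subseteq D$. Now the family $\{Z_G(S) : S \subseteq D \text{ finite}\}$ is downward directed, since $Z_G(S_1) \cap Z_G(S_2) = Z_G(S_1 \cup S_2)$, and all its members have index at most $M'$; hence these (integer) indices attain a maximum at some finite $S^* \subseteq D$. For any finite $S \supseteq S^*$ we then get $Z_G(S) = Z_G(S^*)$, so $Z_G(S^*) \subseteq Z_G(a)$ for every $a \in D$, giving $Z_G(S^*) = Z_G(D) = G'$. Thus $G'$ is definable of finite index.

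Finally, centralizer-connectedness of $G'$ is immediate from the description $G' = Z_G(D)$: if some $a \in G'$ had $Z_{G'}(a) = G' \cap Z_G(a)$ of finite index in $G'$, then $[G : Z_G(a)] \le [G : Z_{G'}(a)] = [G : G'] \cdot [G' : Z_{G'}(a)] < \infty$, so $a \in D$ and therefore $G' = Z_G(D) \subseteq Z_G(a)$, forcing $Z_{G'}(a) = G'$. Hence no finite-index centralizer in $G'$ is proper.
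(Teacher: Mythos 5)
Your proof is correct, but it is organized rather differently from the paper's. The paper first uses uniform finiteness to bound the index of any finite-index centralizer by some $n$, and then invokes the NIP fact that a uniformly definable family of subgroups of index $\le n$ is \emph{finite} (via Baldwin--Saxl); thus there are only finitely many finite-index centralizers, and $G'$ is a finite intersection, immediately definable and of finite index. You instead never show the family of finite-index centralizers is finite: you get definability of $G'$ for free from the identity $G' = Z_G(D)$ with the FC-center $D$ definable (a nice structural point the paper does not isolate), and you get finite index by combining raw Baldwin--Saxl (reduction to $N$-fold subintersections) with a second application of uniform finiteness to the family of simultaneous-conjugation orbits of $N$-tuples, followed by a directedness/maximal-index argument showing the intersection stabilizes at some $Z_G(S^*)$. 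The paper's route is shorter and yields the stronger conclusion that only finitely many finite-index centralizers exist; yours uses only the two primitive finiteness principles (uniform finiteness and Baldwin--Saxl) in their standard forms and would transfer verbatim to any NIP geometric theory without passing through the ``bounded-index families are finite'' lemma. Your centralizer-connectedness argument at the end coincides with the paper's.
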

\begin{proof}
  Recall that $G$ acts on itself via conjugation, $Z_G(a)$ is the
  stabilizer of $a$, and the orbit of $a$ is the conjugacy class
  $a^G$.  Thus the index of $Z_G(a)$ in $G$ is the size of $a^G$, and
  $Z_G(a)$ has finite index if and only if $a^G$ is finite.  The
  theory of $p$-adically closed fields has uniform finiteness, so
  there is some $n$ such that
  \begin{equation*}
    |a^G| < \infty \implies |a^G| \le n
  \end{equation*}
  for every $a \in G$.  Equivalently,
  \begin{equation*}
    |G : Z_G(a)| < \infty \implies |G : Z_G(a)| \le n.
  \end{equation*}
  In an NIP theory such as $p$CF, if $G$ is a definable group and $\phi(x,y)$ is a formula and $n$ is an integer, then the family
  \begin{align*}
    \{H :~ & H \text{ is a subgroup of } G,\\
    & H \text{ is defined by } \phi(x,b) \text{ for some } b, \\
    & \text{and } |G : H| \le n\}
  \end{align*}
  is finite.\footnote{The intersection of this family has finite index
  by the Baldwin-Saxl theorem for NIP theories \cite[Theorem~8.3 and
    the following discussion]{NIPguide}.  Therefore the family is
  finite.}  Consequently, once we have a uniform bound $n$ on the
  index of finite-index centralizers, it follows that there are only
  finitely many finite-index centralizers.  Thus, the group $G' =
  \bigcap \{Z_G(a) : a \in G, ~ |G : Z_G(a)| < \infty\}$ is definable
  and has finite index.

  Next, suppose that $G'$ fails to be centralizer-connected, witnessed
  by some element $a \in G'$ such that
  \begin{equation*}
    1 < |G' : Z_{G'}(a)| < \infty.
  \end{equation*}
  Then $Z_{G'}(a)$ has finite index in $G'$, which has finite index in
  $G$.  As $Z_G(a)$ contains $Z_{G'}(a)$, we see that $Z_G(a)$ also
  has finite index in $G$.  Then $G' \subseteq Z_G(a)$ by choice of
  $G'$, implying that every element of $G'$ commutes with $a$.  This
  makes $Z_{G'}(a) = G'$.
\end{proof}
\begin{theorem} \label{cc2}
  Suppose $(G,\cdot)$ is centralizer-connected and non-abelian.
  \begin{enumerate}
  \item $\dim(Z_G(a)) < \dim(G)$ for every $a \in G \setminus Z(G)$.
  \item $\dim(Z(G)) < \dim(G)$.
  \end{enumerate}
\end{theorem}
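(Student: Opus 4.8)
The plan is to prove part (1) directly and then obtain part (2) as an immediate consequence. The engine in both cases is the orbit--stabilizer dimension formula for the conjugation action, exactly as in the proof of Theorem~\ref{cc-g0}: for $a \in G$, the conjugation map $f_a : G \to G$, $g \mapsto g a g^{-1}$, has image the conjugacy class $a^G$ and fibers equal to the left cosets of $Z_G(a)$, so that $|a^G| = |G : Z_G(a)|$. Since $a^G = f_a(G)$ is definable and all fibers of $f_a$ have dimension $\dim Z_G(a)$, additivity of dimension in $p$CF (a geometric theory, whose dimension agrees with that of the Zariski closure) yields
\[
  \dim G = \dim(a^G) + \dim Z_G(a).
\]

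For part (1), I would fix $a \in G \setminus Z(G)$. Then $a$ fails to commute with some element of $G$, so $Z_G(a)$ is a \emph{proper} subgroup of $G$. Because $G$ is centralizer-connected, $Z_G(a)$ cannot be a proper subgroup of finite index; hence $|G : Z_G(a)| = \infty$, and therefore $a^G$ is infinite. In a geometric theory a definable set is finite if and only if it has dimension $0$, so $\dim(a^G) \ge 1$. Substituting into the displayed equation gives $\dim Z_G(a) = \dim G - \dim(a^G) \le \dim G - 1 < \dim G$, as required.

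Part (2) is then immediate. Since $G$ is non-abelian, there exists $a \in G \setminus Z(G)$. The center satisfies $Z(G) \subseteq Z_G(a)$, so monotonicity of dimension together with part (1) gives $\dim Z(G) \le \dim Z_G(a) < \dim G$.

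The only nontrivial input is the additivity statement $\dim G = \dim(a^G) + \dim Z_G(a)$, i.e.\ the fiber-dimension theorem applied to the definable map $f_a$ with equidimensional fibers; this is the step where I would be careful. It holds in $p$CF because the theory is geometric, with a well-behaved, definable, additive dimension, so no genuine obstacle arises. Equivalently, one could isolate the key point as the standard lemma that a definable subgroup $H \le G$ satisfies $\dim H = \dim G$ if and only if $[G : H] < \infty$, and apply it to $H = Z_G(a)$.
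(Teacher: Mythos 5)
Your proposal is correct and follows essentially the same route as the paper: both rest on the orbit--stabilizer identity $\dim(G) = \dim(Z_G(a)) + \dim(a^G)$ from the interpretable bijection $a^G \cong G/Z_G(a)$, combined with centralizer-connectedness to rule out a proper finite-index centralizer; you argue directly where the paper argues by contradiction, but these are just contrapositives of one another. Part (2) is word-for-word the paper's argument.
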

\begin{proof}
  \begin{enumerate}
  \item There is an interpretable bijection between the conjugacy
    class $a^G$ and the set of cosets $G/Z_G(a)$.  By dimension
    theory,
    \begin{equation*}
      \dim(G) = \dim(Z_G(a)) + \dim(a^G).
    \end{equation*}
    Suppose that $\dim(Z_G(a)) = \dim(G)$ for the sake of
    contradiction. Then $\dim(a^G) = 0$, implying that $a^G$ is finite
    and $Z_G(a)$ has finite index in $G$.  As $G$ is
    centralizer-connected, $Z_G(a) = G$.  But then $a \in Z(G)$.
  \item Take any $a \in G \setminus Z(G)$.  Then $Z(G) \subseteq
    Z_G(a)$, so $\dim(Z(G)) \le \dim(Z_G(a)) < \dim(G)$. \qedhere
  \end{enumerate}
\end{proof}
We will also need the following related facts from \cite{P-Y-commu-by-fin}:
\begin{fact}\label{commu-open-neibor}
  Let $G$ be a group definable in $K$.  If $G$ has a commutative open neighborhood of the identity, then $G$ is commutative-by-finite.
\end{fact}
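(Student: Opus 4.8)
The plan is to extract from the commutative neighborhood an open abelian subgroup, and then to promote it to a \emph{finite-index} abelian subgroup using a dimension count on conjugacy classes together with the Baldwin--Saxl finiteness already exploited in the proof of Theorem~\ref{cc-g0}. First I would replace the given commutative open neighborhood $U$ of the identity by the subgroup $H = \langle U \rangle$ it generates. Since $U$ is open and contains the identity, $H$ is an open subgroup: for $h \in H$ the translate $hU \subseteq H$ is an open neighborhood of $h$. Since any two elements of $U$ commute, $U \subseteq Z_G(U)$, and as $Z_G(U)$ is a subgroup it contains $H$; symmetry of commuting then gives $U \subseteq Z_G(H)$, hence $H \subseteq Z_G(H)$, i.e.\ $H$ is abelian. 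Thus $H$ is an open abelian subgroup, and in particular $\dim(H) = \dim(G)$ because $H$ has nonempty interior.

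The main obstacle is that an open subgroup need not have finite index --- for instance $\Oo(K)$ is an open subgroup of infinite index in $(K,+)$ --- so the conclusion does \emph{not} follow merely from the existence of an open abelian subgroup. The key point is that finiteness of index must be detected through \emph{home-sort} conjugacy classes rather than through coset spaces. Concretely, for each $a \in H$ the centralizer $Z_G(a)$ contains the open set $H$, so $\dim(Z_G(a)) = \dim(G)$; by the dimension formula $\dim(G) = \dim(Z_G(a)) + \dim(a^G)$ used in the proof of Theorem~\ref{cc2}, the conjugacy class $a^G$ has dimension $0$. Since $a^G$ is a definable subset of $G$ and $K$ is a geometric field, a dimension-$0$ definable set is finite, so $a^G$ is finite and $Z_G(a)$ has finite index in $G$. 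Now $Z_G(H) = \bigcap_{a \in H} Z_G(a)$ is an intersection of a uniformly definable family of finite-index subgroups, so by the Baldwin--Saxl argument for NIP theories (exactly as in Theorem~\ref{cc-g0}) it equals a finite subintersection; hence the subgroup $C := Z_G(H)$ has finite index in $G$.

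It remains to locate a finite-index abelian subgroup inside $C$. By construction $H \subseteq Z(C)$, so $Z(C)$ is again an open subgroup of full dimension. Running the same argument one step down, for every $c \in C$ the centralizer $Z_C(c)$ contains the open set $H$, so $c^C$ has dimension $0$ and is therefore finite; thus $Z(C) = \bigcap_{c \in C} Z_C(c)$ is a finite subintersection of finite-index subgroups of $C$ and so has finite index in $C$. Then $Z(C)$ is an abelian subgroup of finite index in $C$, hence of finite index in $G$, which shows that $G$ is commutative-by-finite.

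I expect the first and last paragraphs to be essentially routine; the genuine content is the middle step, where the potential infinitude of the index is defeated by passing from coset spaces to the home-sort conjugacy classes $a^G$ and invoking the geometric-field implication ``$\dim = 0 \Rightarrow$ finite'' together with NIP finiteness of finite-index centralizers.
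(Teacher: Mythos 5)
Your argument is correct. Note first that the paper itself offers no proof of Fact~\ref{commu-open-neibor}: it is imported verbatim from \cite{P-Y-commu-by-fin}, so there is no in-paper proof to compare against. What you have written is a sound self-contained reconstruction, and it is built from exactly the toolkit the paper deploys in the adjacent Theorems~\ref{cc-g0} and \ref{cc2} (which the authors describe as variants of the relevant proposition of \cite{P-Y-commu-by-fin}): the additivity $\dim(G)=\dim(Z_G(a))+\dim(a^G)$, the geometric-field implication that a zero-dimensional definable set is finite, and the uniform-finiteness-plus-Baldwin--Saxl argument showing that a uniformly definable family of finite-index subgroups is finite. Your identification of the real obstacle (openness of a subgroup does not give finite index, e.g.\ $\Oo$ in $(K,+)$) and your workaround via home-sort conjugacy classes are exactly right; the two-stage descent $G \supseteq C = Z_G(H) \supseteq Z(C)$ is needed and works. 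Three small points you should make explicit: (i) the open neighborhood can be shrunk to a \emph{definable} basic open $U$, so that $Z_G(U)=Z_G(H)$ is an intersection over a definable index set and each $Z_G(a)$ is genuinely an instance of one formula; (ii) before invoking the finiteness of the family of finite-index centralizers you need the uniform bound on the indices, which comes from uniform finiteness applied to the definable family $\{a^G\}_{a\in G}$ of finite sets (this is in the proof of Theorem~\ref{cc-g0}, so your pointer there suffices, but say so); and (iii) ``commutative-by-finite'' is usually read with a normal abelian subgroup, so finish by replacing $Z(C)$ with its normal core in $G$, a finite intersection of conjugates which is still abelian and of finite index.
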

\begin{fact} \label{one-dim}
  Let $G$ be a group definable in $K$.  If $\dim(G) = 1$, then $G$ is
  commutative-by-finite.
\end{fact}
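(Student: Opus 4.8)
The plan is to reduce the statement to the standard model $\Qp$, where $G$ is a genuine $p$-adic Lie group, and there produce a commutative open neighborhood of the identity so as to apply Fact~\ref{commu-open-neibor}. For the reduction I use the centralizer machinery. By Theorem~\ref{cc-g0}, the intersection $G'$ of all finite-index centralizers in $G$ is a definable, finite-index, centralizer-connected subgroup; moreover uniform finiteness gives a uniform bound on the index of finite-index centralizers, so $G'$ is \emph{uniformly} definable from the parameters defining $G$. I claim that $G$ is commutative-by-finite if and only if $G'$ is abelian. One direction is clear, since $[G : G'] < \infty$. Conversely, if $G$ is commutative-by-finite then so is $G'$; any finite-index abelian subgroup $A \le G'$ satisfies $A \subseteq Z_{G'}(a)$ for each $a \in A$, so each such $Z_{G'}(a)$ has finite index and hence equals $G'$ by centralizer-connectedness, giving $A \subseteq Z(G')$. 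Thus $Z(G')$ has finite index, and the same reasoning applied to an arbitrary $g \in G'$ shows $Z_{G'}(g) = G'$; hence $G' = Z(G')$ is abelian.

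Now "$\dim G = 1$" and "$G'$ is abelian" are both first-order conditions on the parameter defining $G$ (dimension is definable in families in $p$CF, and $G'$ is uniformly definable), so the assertion of the lemma is a first-order schema over $\pCF$, and it suffices to verify it in $\Qp$. There, $G$ carries the structure of a one-dimensional $p$-adic Lie group (via the manifold structure adapted from \cite{Pillay-G-in-p}). Its Lie algebra $\Lie(G)$ is one-dimensional, hence abelian, so the exponential map identifies a neighborhood of $0$ in $\Lie(G)$ with an abelian open subgroup of $G$; concretely, $G$ has an open subgroup isomorphic to $(\Zz_p, +)$. In particular $G$ has a commutative open neighborhood of the identity, and Fact~\ref{commu-open-neibor} yields that $G$ is commutative-by-finite. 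This settles the statement over $\Qp$, and hence, by transfer, over every model of $\pCF$.

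The main obstacle is exactly what the transfer step circumvents: in a non-standard $K$ the group $G$ is guaranteed only a $C^k$-structure, and passing from the one-dimensional (hence abelian) Lie algebra to an honestly abelian open subgroup is not immediate without local analyticity. Rather than develop $p$-adic Lie theory over non-standard models, the plan exploits that commutative-by-finiteness has been rendered first-order via the centralizer-connected subgroup $G'$. The two remaining points are bookkeeping rather than geometry: (i) that $G'$ and the dimension function are uniformly definable across the relevant definable family, both consequences of uniform finiteness and the definability of dimension in $p$CF; and (ii) that the equivalence "commutative-by-finite $\iff$ $G'$ abelian" is uniform enough to be packaged as a single first-order schema. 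With these in hand, the one-dimensional $\Qp$-Lie-group computation does all the real work.
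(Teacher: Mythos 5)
Your argument is correct, but note that the paper does not prove this statement at all: it is recorded as a \emph{Fact} imported from \cite{P-Y-commu-by-fin}, so there is no in-paper proof to compare against. Judged on its own merits, your reconstruction is sound and uses only tools the paper already has on hand. The reduction of ``commutative-by-finite'' to the first-order condition ``$G'$ is abelian'' via the centralizer-connected subgroup $G'$ of Theorem~\ref{cc-g0} is valid in both directions (the key step being that in a centralizer-connected group, a finite-index centralizer must be the whole group), and $G'$ is indeed uniformly definable once uniform finiteness supplies a bound on the index of finite-index centralizers; together with definability of dimension in families, this makes the statement a first-order schema and legitimizes the transfer to $\Qp$. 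The endgame over $\Qp$ --- one-dimensional Lie algebra is abelian, hence by \cite[Corollary~18.18]{schneider} $G$ has an open subgroup isomorphic to an open subgroup of $(\Qp,+)$, hence Fact~\ref{commu-open-neibor} applies --- is exactly the strategy the paper itself uses in the proof of Theorem~\ref{trivial-adjoint} (reduce to the standard model, pass to the Lie algebra, invoke Schneider, finish with Fact~\ref{commu-open-neibor}). One remark: the $G'$ machinery, while correct, is more than you need. The statement ``$G$ has a commutative open neighborhood of the identity'' is itself expressible by first-order sentences once the definable $C^k$-manifold structure is fixed (this is precisely how the paper transfers part (1) of Theorem~\ref{trivial-adjoint} to $\Qp$), so you could transfer that property directly and apply Fact~\ref{commu-open-neibor} back in the arbitrary model $K$, bypassing the equivalence with ``$G'$ abelian'' entirely. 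Your route buys a transfer target that is manifestly first-order without any appeal to uniformity of the manifold structure, at the cost of the extra centralizer bookkeeping; both are legitimate.
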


\subsection{``Affine'' groups and the adjoint action}


Let $K$ be a $p$-adically closed field.
Let $G$ be a group definable in $K$ of dimension $n$. For $g\in G$ the
map $\Inn(g): x\mapsto gxg^{-1}$ is a $C^k$ automorphism of $G$ and
thus has a differential $d(\Inn(g))_{1}$ at the
identity $1 \in G$.
The differential $d(\Inn(g))_1$ is a linear map on the tangent space $T_1 G \to T_1 G$.
The map $\Ad: g \mapsto d(\Inn(g))_{\id_G}$ is a definable group homomorphism from $G$ to $\GL(T_1 G)$, called the \emph{adjoint representation} of $G$.
\begin{theorem} \label{trivial-adjoint}
  Suppose $\Ad : G \to \GL(T_1 G)$ is trivial.
  \begin{enumerate}
  \item There is a commutative open neighborhood $U$ of $1 \in G$.
  \item $G$ is commutative-by-finite.
  \end{enumerate}
\end{theorem}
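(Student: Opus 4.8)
The plan is to prove (1) first and then obtain (2) immediately from Fact~\ref{commu-open-neibor}. For (1), I want to show that the commutator map
$$c : G \times G \to G, \qquad c(x,y) = xyx^{-1}y^{-1},$$
is locally constant near $(1,1)$ with value $1$; any product ball on which $c \equiv 1$ is then a commutative open neighborhood of the identity. Conceptually, the hypothesis $\Ad \equiv \id$ says that the Lie bracket on $T_1 G$ vanishes, and for a $p$-adic group a vanishing bracket should force local commutativity; but rather than invoke Lie-theoretic machinery I would extract the statement directly from the differential calculus of the group law.

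First I would record two elementary facts about differentials at $1$ in a $C^k$-group: the multiplication $m$ satisfies $dm_{(1,1)}(u,v) = u + v$, and consequently inversion $\iota$ satisfies $d\iota_1 = -\id$ (differentiate $m(x,\iota(x)) \equiv 1$). Writing $[g,x] = \Inn(g)(x)\cdot x^{-1}$ and using the hypothesis $d(\Inn(g))_1 = \id$, these give
$$\partial_x [g,x]\big|_{x=1} = d(\Inn(g))_1 + d\iota_1 = \id - \id = 0$$
for every $g \in G$. This is only the base case; the main point is to propagate the vanishing of $\partial_x[g,x]$ from $x=1$ to all $x$. For this I would use the commutator identity
$$[g,xy] = [g,x]\cdot \Inn(x)\big([g,y]\big),$$
differentiating both sides in $y$ at $y=1$. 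On the right, the factor $[g,x]$ is constant and $\Inn(x)([g,y])$ has $y$-differential $d(\Inn(x))_1 \circ \partial_y[g,y]|_{y=1} = 0$, so the right side has zero $y$-differential; on the left, the chain rule gives $\partial_z[g,z]|_{z=x}\circ d(L_x)_1$, where $L_x$ is left translation. Since $d(L_x)_1$ is invertible, $\partial_z[g,z]|_{z=x} = 0$ for all $g$ and all $x$. A symmetric argument (or the identity $[g,z] = [z,g]^{-1}$ together with smoothness of inversion, which uses $d(\Inn(z))_1 = \id$) shows the partial in the first variable vanishes everywhere too, so the full differential $dc$ is identically zero on $G \times G$.

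It then remains to pass from $dc \equiv 0$ to local constancy of $c$. Here I would work in a chart sending $1 \mapsto 0$ and invoke the $p$-adic fact that a $C^1$ map between balls with identically vanishing derivative is locally constant. Since $c(1,1) = 1$ and $c$ is $C^k$ (hence $C^1$), there is a ball $B$ around $(1,1)$ on which $c$ is constantly $1$; choosing an open neighborhood $U$ of $1$ with $U \times U \subseteq B$ yields $[x,y] = 1$ for all $x,y \in U$, which is exactly (1). Finally (2) is immediate: $U$ is a commutative open neighborhood of $1$, so $G$ is commutative-by-finite by Fact~\ref{commu-open-neibor}.

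The step I expect to require the most care is the passage from a vanishing differential to local constancy. In the $p$-adic setting, mere pointwise differentiability with zero derivative does \emph{not} imply local constancy, so it is essential to use that the group operations—and hence $c$—are genuinely $C^1$ (indeed $C^k$) and to invoke the $C^1$ version of the statement. The only other point to watch is that the neighborhood $U$ is uniform (independent of the chosen points), which is automatic here since local constancy is applied once, to the single map $c$ at the single point $(1,1)$.
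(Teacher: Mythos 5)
Your differential computation is correct and nicely self-contained: the cocycle identity $[g,xy] = [g,x]\cdot \Inn(x)([g,y])$ really does propagate the vanishing of $\partial_y[g,y]$ from $y=1$ (where it follows from $d(\Inn(g))_1 = \id$ and $d\iota_1 = -\id$) to all points, and the inversion trick handles the other variable, so $dc \equiv 0$ on $G\times G$. The gap is exactly at the step you flagged as delicate, and it is fatal as stated: it is \emph{not} true that a $p$-adic $C^1$ map with identically vanishing derivative is locally constant, even for $C^1$ in the strict (Schikhof) sense in which the difference quotient extends continuously to the diagonal. The standard counterexample is $f : \Zz_p \to \Zz_p$ defined on digit expansions by $f\left(\sum_n a_n p^n\right) = \sum_n a_n p^{2n}$: one checks $|f(x)-f(y)| = |x-y|^2$, so $f$ is injective, while the difference quotient satisfies $|\Phi_1 f(x,y)| = |x-y| \to 0$ at the diagonal, so $f$ is $C^1$ with $f' \equiv 0$ everywhere, yet $f$ is nowhere locally constant. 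Passing to $C^k$ for larger finite $k$ does not repair this; only local analyticity does (a convergent power series whose derivative vanishes on a ball is constant there). So the inference from $dc \equiv 0$ to $c \equiv 1$ near $(1,1)$ does not go through with the regularity you have invoked.

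The argument can be salvaged, but only by importing exactly the ingredient the paper uses: first reduce to $K = \Qp$ (part (1) is a conjunction of first-order sentences, so it transfers), where $G$ carries a locally analytic $p$-adic Lie group structure and the commutator map is locally analytic; then your computation plus analyticity finishes. At that point your route is a genuine, more hands-on alternative to the paper's proof, which instead shows that the Lie algebra of $G$ is abelian (either by identifying $\Lie(\Ad)$ with $\ad = [\,\cdot\,,-]$, or by observing that triviality of $\Ad$ makes every invariant vector field simultaneously left- and right-invariant, and such fields commute) and then invokes \cite[Corollary~18.18]{schneider} to produce an abelian open subgroup. As written, however, your proof rests on a false statement about $p$-adic $C^1$ functions and is incomplete.
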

\begin{proof}
  By Fact~\ref{commu-open-neibor}, it suffices to prove part (1).  The
  statement of (1) can be expressed via infinitely-many first-order
  sentences, so we may assume that $K = \Qp$.  Then $G$ is a $p$-adic
  Lie group.  By \cite[Corollary~18.18]{schneider}, it suffices to
  show that the Lie algebra of $G$ is abelian, i.e.,
  trivial.\footnote{If $\Lie(G)$ is trivial, then $\Lie(G) \cong
  \Lie(\Qp^n)$, so \cite[Corollary~18.18]{schneider} gives isomorphic
  open subgroups of $U_1 \subseteq G$ and $U_2 \subseteq \Qp^n$.  The
  isomorphism $U_1 \cong U_2$ shows that $U_1$ is abelian.}  That is,
  we must show that $[s,t] = 0$ for $s, t \in \Lie(G)$.

  The correct way to see this is to apply the functor $\Lie(-)$ from
  Lie groups to Lie algebras to the morphism $\Ad : G \to \GL(T_1 G)$.
  The result is known to be the adjoint representation
  \begin{gather*}
    \ad : \Lie(G) \to \mathfrak{gl}(\Lie(G)) \\
    \ad(s) = [s,-]
  \end{gather*}
  (though we had trouble finding a reference for this fact in the $p$-adic Lie group
  setting).  The triviality of $\Ad : G \to \GL(T_1 G)$ implies
  triviality of $\ad(-)$, which means that $[s,t] = 0$ for any $s$ and
  $t$.

  Here is a different proof.  The fact that $G$ acts trivially on the
  tangent space implies that any vector $s \in T_1G$ extends
  uniquely to a vector field $\xi_s$ on $G$ that is both left and
  right invariant.  Indeed, if $\lambda_g$ and $\rho_g$ denote left
  and right multiplication by $g \in G$, then $\lambda_g^{-1} \circ
  \rho_g$ fixes $s$ by triviality of the adjoint representation
  $\Ad(-)$, and so $\lambda_g(s) =\rho_g(s)$.

  The Lie algebra structure on $T_1G$ is induced by the Lie algebra
  structure on right-invariant vector fields:
  \begin{equation*}
    \xi_{[s,t]} = [\xi_s,\xi_t].
  \end{equation*}
  (See \cite[p.\@ 100, Definition]{schneider}.)  However, a
  left-invariant vector field $\xi_1$ commutes with a right-invariant
  vector field $\xi_2$, by an easy calculation related to the fact
  that the action of $G$ on the left commutes with the action of $G$
  on the right.  Since $\xi_s$ and $\xi_t$ are both left-invariant and
  right-invariant, they commute.  Therefore, the Lie algebra of $G$
  is abelian.
\end{proof}
Recall that an algebraic group $G$ is said to be ``linear'' if it is
an algebraic subgroup of $\GL_n$ for some $n$.  Analogously,
\begin{definition} \label{def-aff}
  A definable group $G$ is \emph{affine} if $G$ is a definable
  subgroup of $\GL_n(K)$ for some $n$.
\end{definition}
Perhaps ``linear'' would have been a better term than ``affine'', but
it seemed helpful to use separate terminology for the two
concepts---one is a property of algebraic groups and one is a property
of definable groups.  At any rate, the two concepts are related as
follows:
\begin{enumerate}
\item If $H$ is a linear algebraic group, then any definable subgroup
  $G \subseteq H(K)$ is an affine definable group.  In other words,
  affine definable groups are exactly the definable subgroups of
  linear algebraic groups.
\item If $G \subseteq GL_n(K)$ is an affine definable group, then the
  Zariski closure of $G$ in $\GL_n$ is a linear algebraic group.
\end{enumerate}
Note that Theorem~\ref{intro3} is a statement about affine definable
groups.
\begin{lemma}\label{abelian-affine}
If $G$ is a definable group in $K$, then there is a definable short exact sequence of $K$-definable groups
\[
1\rightarrow A\rightarrow G\stackrel{\pi}{\rightarrow} H\rightarrow 1,
\] 
where $A$ is commutative-by-finite, and $H$ is an affine group.
\end{lemma}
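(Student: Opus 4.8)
The plan is to take $\pi$ to be the adjoint representation $\Ad \colon G \to \GL(T_1 G)$ introduced above, and to set $A = \ker(\Ad)$ and $H = \im(\Ad) = \Ad(G)$. Since $\Ad$ is a definable homomorphism, $A$ is a definable normal subgroup of $G$ and $H = \Ad(G)$ is a definable subset of $\GL(T_1 G)$, hence a definable subgroup. Choosing a $K$-basis of the $n$-dimensional vector space $T_1 G$ identifies $\GL(T_1 G)$ with $\GL_n(K)$, so $H$ is a definable subgroup of $\GL_n(K)$ and is therefore affine in the sense of Definition~\ref{def-aff}. By construction $\ker(\Ad) = A$ and $\im(\Ad) = H$, so the sequence $1 \to A \to G \xrightarrow{\Ad} H \to 1$ is a definable short exact sequence of $K$-definable groups. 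It remains only to show that $A$ is commutative-by-finite.

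For this I would apply Theorem~\ref{trivial-adjoint} to $A$, viewed as a definable group in its own right. The content is to check that the adjoint representation $\Ad_A \colon A \to \GL(T_1 A)$ of $A$ is trivial; triviality of $\Ad_A$ then produces a commutative open neighborhood of $1$ in $A$ and hence, via Fact~\ref{commu-open-neibor}, shows that $A$ is commutative-by-finite. To see that $\Ad_A$ is trivial, note that for $g \in A$ the inner automorphism $\Inn_A(g)$ is simply the restriction of $\Inn_G(g)$ to the subgroup $A$ (conjugation by $g \in A$ preserves $A$), so these fit into a commuting square with the inclusion $\iota \colon A \hookrightarrow G$. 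Differentiating at $1$ and using the chain rule gives
\[
  d\iota_1 \circ d(\Inn_A(g))_1 = d(\Inn_G(g))_1 \circ d\iota_1 .
\]
Because $g$ lies in $A = \ker(\Ad)$, the map $d(\Inn_G(g))_1$ is the identity on $T_1 G$, so the right-hand side is just $d\iota_1$; since $d\iota_1$ is injective we may cancel it and conclude $d(\Inn_A(g))_1 = \id_{T_1 A}$, i.e.\ $\Ad_A(g) = \id$. As $g \in A$ was arbitrary, $\Ad_A$ is trivial.

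The step I expect to be the main obstacle is justifying that the inclusion $\iota \colon A \hookrightarrow G$ is a $C^k$-map whose differential $d\iota_1$ is injective --- equivalently, that the definable subgroup $A$ is a $C^k$-submanifold of $G$ with $T_1 A$ embedded as a subspace of $T_1 G$. This is precisely what licenses the chain-rule computation and the cancellation of $d\iota_1$ above. For $K = \Qp$ this follows from Cartan's closed-subgroup theorem applied to the $p$-adic Lie group $G$, and for general $K$ it is the definable analogue, part of the standard theory of $p$-adic definable groups developed following \cite{Pillay-G-in-p}; alternatively one can reduce to the standard model by the same first-order argument used in the proof of Theorem~\ref{trivial-adjoint}. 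Once this submanifold fact is in hand, everything else is formal.
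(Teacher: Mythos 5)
Your proposal is correct and follows exactly the paper's own argument: take $\pi = \Ad$, $A = \ker(\Ad)$, $H = \im(\Ad)$, and deduce that $A$ is commutative-by-finite from Theorem~\ref{trivial-adjoint}. The paper states the key step ``the adjoint action of $A$ is trivial'' without elaboration, whereas you carefully justify why triviality of $\Ad_G$ on $A$ yields triviality of the intrinsic adjoint representation $\Ad_A$ on $T_1 A$ via the embedding $T_1 A \hookrightarrow T_1 G$; this is a legitimate fill-in of a detail the authors left implicit, not a different route.
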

\begin{proof}
Consider the adjoint representation $\Ad: G\rightarrow \GL(T_1 G)$ of
$G$. Let $\pi=\Ad$, $A=\ker(\Ad)$, and $H=\img(\Ad)$.  Note that the
adjoint action of $A$ is trivial, and so $A$ is commutative-by-finite by Theorem~\ref{trivial-adjoint}.
\end{proof}

\subsection{Definable compactness in extensions and quotients}
Let $G$ be a definable group in a $p$-adically closed field $K$, and
let $H$ be a definable subgroup.  As noted above, $G$ and $H$ have the
structure of definable manifolds, making them into topological groups.
\begin{fact}[{\cite[Section~5.4]{admissible}}] \label{adm1}
  The inclusion of $H$ into $G$ is a closed embedding, and a clopen
  embedding if $\dim(H) = \dim(G)$.
\end{fact}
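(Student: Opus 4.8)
The plan is to prove both assertions from the dimension theory of $p$CF together with the homogeneity of definable groups; throughout, $G$ and $H$ carry their intrinsic $C^k$-manifold topologies. First I would prove closedness. Let $\overline{H}$ be the topological closure of $H$ in $G$. Because the manifold topology on $G$ has a definable basis, the closure operation is definable, so $\overline{H}$ is a definable set; and the closure of a subgroup of a topological group is again a subgroup, so $\overline{H}$ is a definable subgroup. The frontier inequality $\dim(\overline{H}\setminus H)<\dim(H)$ gives $\dim(\overline{H})=\dim(H)$. Since a definable subset of full dimension in a definable group has nonempty interior, $H$ contains a nonempty open subset $U$ of $\overline{H}$; choosing $u\in U$ and setting $V=u^{-1}U$, we obtain an open neighborhood $V$ of $1$ with $V\subseteq H$, whence $H=\bigcup_{h\in H}hV$ is open in $\overline{H}$. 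An open subgroup is also closed, so $H$ is closed in $\overline{H}$; being at once dense and closed in $\overline{H}$, it equals $\overline{H}$, and therefore $H$ is closed in $G$. The clopen case is the same argument carried out inside $G$: if $\dim(H)=\dim(G)$ then $H$ has full dimension in $G$, hence nonempty interior, and the translation argument shows $H$ is open, hence (being a subgroup) clopen.

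Next I would upgrade ``closed subset'' to ``closed embedding,'' i.e.\@ show that the subspace topology on $H$ inherited from $G$ agrees with its intrinsic manifold topology. The key claim is that $H$ is a $C^k$-submanifold of $G$. By generic smoothness of definable sets in $p$CF, $H$ is a $C^k$-submanifold of $G$ in a neighborhood of some point $h_0$. For any $h\in H$, left translation $\lambda_{hh_0^{-1}}$ is a $C^k$-diffeomorphism of $G$ sending $h_0$ to $h$ and preserving $H$ (as $H$ is a subgroup), so $H$ is a $C^k$-submanifold in a neighborhood of each of its points. For a submanifold the subspace topology and the intrinsic manifold topology coincide, and by uniqueness of the $C^k$-structure the latter is the manifold topology of the definable group $H$. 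Together with closedness this makes the inclusion a closed embedding; in the equidimensional case $H$ is open, so the inclusion is an open, hence clopen, embedding.

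I expect the embedding step to be the main obstacle, and within it the submanifold claim: this is the only place where the finer structure theory of definable sets in $p$CF (generic smoothness, i.e.\@ the existence of a dense open $C^k$-submanifold locus, coming from cell decomposition) is needed, and one must verify that the local smoothness of $H$ is genuinely compatible with the ambient $C^k$-structure of $G$ so that homogeneity can propagate it to all of $H$. By contrast, the closedness and clopen assertions reduce to routine dimension counting together with the standard fact that open subgroups of topological groups are closed.
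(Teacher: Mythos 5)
The paper does not actually prove this statement: it is imported verbatim as a Fact from \cite[Section~5.4]{admissible}, so there is no in-paper argument to compare yours against. On its own merits, your proposal follows the standard route (generic smoothness from cell decomposition, propagated by left translation, to make $H$ a $C^k$-submanifold of $G$; uniqueness of the $C^k$-structure to identify the subspace topology with the intrinsic one; dimension theory for closedness), and that outline is sound.

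One step needs repair. In the closedness argument you produce an open $U\subseteq H$ inside $\overline{H}$ by invoking ``full dimension implies nonempty interior'' for the definable group $\overline{H}$; that fact refers to the \emph{intrinsic} manifold topology of $\overline{H}$, so what you obtain is that $H$ is open, hence closed, in the intrinsic topology of $\overline{H}$. But $H$ is dense in $\overline{H}$ only for the \emph{subspace} topology inherited from $G$, and a priori these two topologies differ --- their agreement is essentially the embedding statement you only establish afterwards. So ``dense and closed, hence equal to $\overline{H}$'' mixes two topologies and is circular as written. The fix is to drop the topology from this step entirely: $\overline{H}$ is a definable group containing $H$ as a subgroup, so $\overline{H}\setminus H$ is a union of cosets of $H$, each in definable bijection with $H$ and hence of dimension $\dim(H)=\dim(\overline{H})$; if $\overline{H}\setminus H$ were nonempty this would contradict the frontier inequality $\dim(\overline{H}\setminus H)<\dim(H)$. (Alternatively, prove the submanifold claim first and deduce that the intrinsic and subspace topologies agree before running your density argument.) With that one repair, and granting the generic-smoothness input that you correctly flag as the real content, the proof goes through.
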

In particular, if $G$ is a definable group and $H$ is a definable
subgroup of the same dimension, then $H$ is clopen as a subset of $G$.
\begin{corollary} \label{index}
  If $H$ has finite index in $G$, then $H$ is definably compact if and
  only if $G$ is definably compact.
\end{corollary}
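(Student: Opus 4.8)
The plan is to reduce everything to the listed facts about definable compactness, using Fact~\ref{adm1} to control the topology. First I would observe that since $H$ has finite index in $G$, we have $\dim(H) = \dim(G)$: the finitely many cosets of $H$ partition $G$, each coset is the image of $H$ under left multiplication by a fixed element (a definable homeomorphism, since $G$ is a topological group), and so every coset has the same dimension as $H$; by dimension theory the maximum of these equals $\dim(G)$. With the dimensions equal, the second clause of Fact~\ref{adm1} applies, so the inclusion $H \hookrightarrow G$ is a \emph{clopen} embedding; in particular $H$ is both closed and open in $G$.

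For the forward direction, suppose $G$ is definably compact. Since $H$ is a closed definable subset of $G$, and $G$ is Hausdorff (being a definable manifold), part~(3) of the Fact on definable compactness in Subsection~\ref{sec:defcom} immediately gives that $H$ is definably compact.

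For the converse, suppose $H$ is definably compact. Fix coset representatives $g_1, \dots, g_n$ so that $G = \bigcup_{i=1}^n g_i H$. Each left-translation map $x \mapsto g_i x$ is a definable continuous map (indeed a homeomorphism) of $G$ into itself, so each coset $g_i H$, being the continuous image of the definably compact set $H$, is definably compact by part~(2) of the same Fact. A finite union of definably compact subsets is definably compact by part~(6), so $G = \bigcup_{i=1}^n g_i H$ is definably compact.

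I do not expect a genuine obstacle here, as the statement is essentially a packaging of the six listed closure properties of definable compactness together with Fact~\ref{adm1}. The only point requiring a moment's care is verifying $\dim(H) = \dim(G)$ so that the clopen clause of Fact~\ref{adm1} is available; once $H$ is known to be clopen, the forward direction uses closedness and the backward direction uses openness (via the finite cover by homeomorphic cosets), and both conclusions follow formally.
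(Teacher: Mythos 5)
Your proof is correct and follows essentially the same route as the paper's: establish $\dim(H)=\dim(G)$, invoke Fact~\ref{adm1} to get that $H$ is clopen in $G$, and then deduce both directions from the decomposition of $G$ into finitely many homeomorphic copies of $H$ using the closure properties of definable compactness. The paper states this more tersely, but the content is identical.
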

\begin{proof}
  The groups $H$ and $G$ have the same dimension, so $H$ is a clopen
  subgroup of $G$.  Then $G$ is homeomorphic to the disjoint union of
  finitely many copies of $H$.
\end{proof}
Moving beyond the finite-index case, regard the interpretable set
$G/H$ as a topological space using the quotient topology.  By the
argument of \cite[Proposition~5.1]{jy-abelian}, the continuous map $G
\to G/H$ is an open map and the quotient topology is definable---or
rather, interpretable.  Consequently, it makes sense to say that $G/H$
is or isn't definably compact.
\begin{theorem} \label{quot1}
  The group $G$ is definably compact if and only if $H$ and $G/H$ are
  definably compact.
\end{theorem}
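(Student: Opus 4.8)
The plan is to prove the two directions separately; the forward direction is formal, while the reverse direction rests on showing that $\pi \colon G \to G/H$ is a closed (equivalently, proper) map, which is where definable compactness of $H$ enters. \emph{Forward direction:} suppose $G$ is definably compact. By Fact~\ref{adm1}, $H$ is a closed subset of $G$; since a closed definable subset of a definably compact space is definably compact, $H$ is definably compact. For the quotient, recall that $\pi$ is a continuous interpretable surjection, so $G/H = \pi(G)$ is the continuous image of a definably compact space and is therefore definably compact.

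\emph{The quotient map is closed.} Now assume $H$ is definably compact; I will show that $\pi$ sends closed definable sets to closed sets. Since $\pi$ is a quotient map, it suffices to prove that $CH = \pi^{-1}(\pi(C))$ is closed for every closed definable $C \subseteq G$. Because $G$ is a definable manifold over $K$ and closed balls in $K^n$ are closed and bounded, hence definably compact, the space $G$ is locally definably compact; thus it is enough to check that $CH \cap B$ is closed for each definably compact neighborhood $B$ of a point of $G$. Writing $m$ for the multiplication map, one has $CH \cap B = m\bigl((C \times H) \cap m^{-1}(B)\bigr)$. The set $(C \times H) \cap m^{-1}(B)$ is closed, and, using $H = H^{-1}$, it is contained in $BH \times H$, where $BH = m(B \times H)$ is definably compact as the continuous image of the definably compact product $B \times H$. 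A closed definable subset of a definably compact set is definably compact, so $(C \times H) \cap m^{-1}(B)$ is definably compact, and hence so is its continuous image $CH \cap B$; being definably compact in the Hausdorff space $G$, it is closed. Therefore $CH$ is closed, as desired.

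\emph{Gluing base and fiber.} Finally, suppose $H$ and $G/H$ are definably compact, and let $\{Y_t : t \in T\}$ be a downwards directed definable family of nonempty closed subsets of $G$; I must find a point of $\bigcap_t Y_t$. By the previous step, $\{\pi(Y_t) : t \in T\}$ is a downwards directed interpretable family of nonempty closed subsets of $G/H$, so definable compactness of $G/H$ provides a point $\bar g \in \bigcap_t \pi(Y_t)$. The fiber $F = \pi^{-1}(\bar g)$ is a coset of $H$, hence definably compact, and $\bar g \in \pi(Y_t)$ means exactly that $Y_t \cap F \neq \emptyset$ for each $t$. Then $\{Y_t \cap F : t \in T\}$ is a downwards directed definable family of nonempty closed subsets of $F$, so definable compactness of $F$ yields a point in $\bigcap_t (Y_t \cap F) \subseteq \bigcap_t Y_t$. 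The main obstacle is the closedness of $\pi$: compactness of the fiber and of the base do not by themselves force compactness of the total space, and it is precisely the properness of $\pi$ that allows the two hypotheses to be combined; the remaining verifications, that the families $\{\pi(Y_t)\}$ and $\{Y_t \cap F\}$ stay definable and downwards directed and that $F$ is a translate of $H$, are routine.
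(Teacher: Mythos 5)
Your proof is correct, but it takes a genuinely different route from the paper's. The paper handles the reverse direction by invoking Proposition~2.8 of \cite{J-Y-Non-compact}, which exhausts $G$ by a definable increasing family $\{U_\gamma\}$ of open definably compact sets; definable compactness of $G/H$ then forces $\pi(U_\gamma) = G/H$ for some $\gamma$, so $G = U_\gamma \cdot H$ is the continuous image of the definably compact set $U_\gamma \times H$. You instead prove that $\pi$ is a closed (proper) map when $H$ is definably compact --- via the identity $CH \cap B = m\bigl((C\times H)\cap m^{-1}(B)\bigr)$ and local definable compactness of the manifold $G$ --- and then glue base and fiber directly from the downward-directed-family definition of definable compactness. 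Both arguments are sound; your closedness computation is correct (note that $B$, being definably compact in the Hausdorff space $G$, is closed, so $m^{-1}(B)$ is closed, and the containment in $BH\times H$ does the rest), and the fiber argument correctly reduces to definable compactness of the coset $F=\pi^{-1}(\bar g)\cong H$. What the paper's route buys is brevity, at the cost of depending on the exhaustion lemma, which is a specific structural fact about definable groups in $p$CF; what your route buys is self-containedness and portability to settings where no such exhaustion is available, at the cost of more point-set verification (Hausdorffness, local definable compactness, and the standard facts from Section~3.1 on images, products, and closed subsets of definably compact sets). One small point worth making explicit if you write this up: the family $\{\pi(Y_t)\}$ lives in the \emph{interpretable} space $G/H$, so you are implicitly using the extension of the definition of definable compactness to interpretable families, which is exactly the sense in which the paper asserts that ``it makes sense to say that $G/H$ is or isn't definably compact.''
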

\begin{proof}
  First suppose $G$ is definably compact.  By Fact~\ref{adm1}, $H$ is
  a closed subspace of $G$, and so $H$ is definably compact.  The
  continuous surjection $G \to G/H$ shows that $G/H$ is definably
  compact.

  Conversely, suppose that $H$ and $G/H$ are definably compact.  By
  Proposition~2.8 in \cite{J-Y-Non-compact}, there is a definable
  family of sets $\{U_\gamma\}_{\gamma \in \Gamma}$ such that (1) each
  $U_\gamma$ is open and definably compact, (2) the family is
  increasing in the sense that
  \begin{equation*}
    \gamma \le \gamma' \implies U_\gamma \subseteq U_{\gamma'},
  \end{equation*}
  and (3) $G = \bigcup_{\gamma \in \Gamma} U_\gamma$.  Let $f : G \to
  G/H$ be the quotient map.  Because $f$ is a continuous open map,
  each set $f(U_\gamma)$ is open and definably compact.  By definable
  compactness of $G/H$, there is some $\gamma$ such that $f(U_\gamma)
  = G/H$, implying that $G \subseteq U_\gamma \cdot H$.  Then $G$ is
  the image of the definably compact space $U_\gamma \times H$ under
  the continuous map $(x,y) \mapsto x \cdot y$, so $G$ is definably
  compact.
\end{proof}
A nearly identical proof shows that
\begin{theorem} \label{quot2}
  If $K = \Qq_p$, then the group $G$ is compact if and only if $H$ and
  $G/H$ are compact.
\end{theorem}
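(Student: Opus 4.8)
The plan is to run the proof of Theorem~\ref{quot1} essentially verbatim, replacing ``definably compact'' by ``compact'' throughout, once we verify that over $\Qq_p$ these two notions coincide for definable sets.

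First I would treat the forward direction, which is purely topological. Suppose $G$ is compact. By Fact~\ref{adm1} the inclusion $H \hookrightarrow G$ is a closed embedding, so $H$ is a closed subspace of the compact space $G$ and is therefore compact. The quotient map $f : G \to G/H$ is a continuous surjection, so its image $G/H$ is compact as well. This direction needs nothing special about $\Qq_p$ beyond the fact that $H$ is closed in $G$.

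The reverse direction is where the hypothesis $K = \Qq_p$ enters. The key observation is that over $\Qq_p$ a definable set is definably compact iff it is closed and bounded \cite{J-Y-Non-compact}, and since $\Qq_p$ is locally compact a closed bounded definable subset of $\Qq_p^n$ is genuinely compact; hence for definable subsets of $G$ (and of $G/H$) definable compactness and topological compactness agree. Now suppose $H$ and $G/H$ are compact. Applying Proposition~2.8 of \cite{J-Y-Non-compact} exactly as in the proof of Theorem~\ref{quot1}, I obtain an increasing definable family $\{U_\gamma\}_{\gamma \in \Gamma}$ of open definably compact sets with $G = \bigcup_{\gamma} U_\gamma$; by the coincidence just noted each $U_\gamma$ is in fact open and compact. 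Pushing forward along the continuous open map $f$, each $f(U_\gamma)$ is open and compact, and $\{f(U_\gamma)\}$ is an increasing open cover of $G/H$. Compactness of $G/H$ forces $f(U_{\gamma_0}) = G/H$ for a single index $\gamma_0$ (the family being increasing), so $G \subseteq U_{\gamma_0} \cdot H$. Finally $G$ is the image of the compact space $U_{\gamma_0} \times H$ under the continuous multiplication map $(x,y) \mapsto x \cdot y$, and is therefore compact.

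I do not expect a serious obstacle: the only content beyond Theorem~\ref{quot1} is the identification of definable compactness with genuine compactness for definable subsets over $\Qq_p$, which is immediate from local compactness of $\Qq_p$. The one point worth checking is that Proposition~2.8 of \cite{J-Y-Non-compact} applies to $G$ as a definable manifold rather than merely to a definable subset of affine space, so that the exhaustion $\{U_\gamma\}$ is available; but this is precisely what was already used in the proof of Theorem~\ref{quot1}.
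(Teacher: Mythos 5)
Your proposal is correct and is exactly what the paper intends: the paper gives no separate argument for Theorem~\ref{quot2}, stating only that ``a nearly identical proof'' to that of Theorem~\ref{quot1} works, which is precisely your substitution of ``compact'' for ``definably compact'' justified by the agreement of the two notions over $\Qq_p$ (cf.\ \cite[Remark~2.12]{J-Y-Non-compact}, which the paper invokes for the same purpose in Lemma~\ref{affine-case}). Your closing remark about an alternative derivation via the equivalence of compactness and definable compactness also matches the paper's comment that Theorem~\ref{quot2} follows from Theorem~\ref{quot1} by \cite[Theorem~8.15]{AGJ}, ``but this is overkill.''
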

In fact, Theorem~\ref{quot2} follows from Theorem~\ref{quot1}, because
definable compactness agrees with compactness for interpretable
topological spaces in $\Qq_p$ \cite[Theorem~8.15]{AGJ}, but this is
overkill.
\begin{fact}[{\cite[Proposition~5.19]{admissible}}]
  In the case where $H$ is a normal subgroup and the quotient $G/H$ is
  definable (rather than interpretable), the quotient topology on
  $G/H$ agrees with the definable manifold topology as a definable
  group.
\end{fact}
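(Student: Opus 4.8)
The plan is to compare the two topologies on $G/H$ directly: write $\tau_q$ for the quotient topology induced by $\pi : G \to G/H$ and $\tau_m$ for the canonical definable manifold topology that $G/H$ carries as a definable group. Both make $G/H$ a topological group with the same underlying operation, so it suffices to show that the definable homomorphism $\pi$ is continuous and open when the target carries $\tau_m$. Indeed, continuity of $\pi$ means every $\tau_m$-open $U$ has $\pi^{-1}(U)$ open in $G$, so $U$ is $\tau_q$-open, giving $\tau_m \subseteq \tau_q$; conversely, if $\pi$ is open and $V$ is $\tau_q$-open, then $\pi^{-1}(V)$ is open in $G$ and $V = \pi(\pi^{-1}(V))$ is $\tau_m$-open by openness and surjectivity of $\pi$, giving $\tau_q \subseteq \tau_m$. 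Thus the whole statement reduces to showing that $\pi$ is a continuous open surjection for $\tau_m$.

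For continuity I would use generic continuity together with homogeneity. Definable maps between definable manifolds over a $p$-adically closed field are continuous on a dense definable open subset of the domain (in charts, this is the usual statement that definable functions into $K^m$ are piecewise continuous), so $\pi$ is continuous at some point $g_0 \in G$. To transfer this to an arbitrary $g_1$, use that left translations $\lambda_t : x \mapsto tx$ are homeomorphisms of both $G$ and $(G/H,\tau_m)$ and that $\pi \circ \lambda_t = \lambda_{\pi(t)} \circ \pi$: writing $t = g_1 g_0^{-1}$, for $x$ near $g_1$ we have $\pi(x) = \pi(t)\,\pi(t^{-1}x)$ with $t^{-1}x$ near $g_0$, so continuity at $g_0$ forces continuity at $g_1$. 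Hence $\pi$ is continuous everywhere.

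For openness I would first reduce to openness at the identity via the standard topological-group fact that a homomorphism is open iff $\pi(U)$ is a $\tau_m$-neighborhood of $1$ for every neighborhood $U$ of $1 \in G$. Fix such a $U$ and choose a symmetric open neighborhood $V$ of $1$ with $VV \subseteq U$. The set $\pi(V)$ is definable, and a fiber-dimension computation gives $\dim \pi(V) = \dim G - \dim H = \dim(G/H)$: the restriction of $\pi$ to $V$ is onto $\pi(V)$, and each nonempty fiber $V \cap gH$ is open in $gH$, hence of dimension $\dim H$. Since a definable subset of a manifold of full dimension has nonempty interior (read off in charts from the corresponding fact for definable subsets of $K^n$), $\pi(V)$ contains some $\tau_m$-open $W$. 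Picking $v \in V$ with $\pi(v) \in W$, the set $\pi(v)^{-1}W$ is a $\tau_m$-open neighborhood of $1$ contained in $\pi(v)^{-1}\pi(V) = \pi(v^{-1}V) \subseteq \pi(VV) \subseteq \pi(U)$, so $\pi(U)$ is a neighborhood of $1$. This gives openness at $1$, hence everywhere.

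The main obstacle is the openness argument, and specifically its two ingredients: the fiber-dimension identity $\dim \pi(V) = \dim(G/H)$ and the full-dimension-implies-interior principle, both of which must be applied on the manifold $G/H$ rather than on affine space and so require passing to charts. Continuity is comparatively routine once generic continuity and homogeneity are in hand. With $\pi$ shown to be a continuous open surjection for $\tau_m$, the reduction in the first paragraph yields $\tau_q = \tau_m$, as desired.
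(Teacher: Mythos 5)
Your argument is correct, but note that the paper does not prove this statement at all: it is quoted as a Fact with a citation to Proposition~5.19 of the reference \texttt{admissible}, so there is no in-paper proof to compare against. What you have written is the standard self-contained argument one would give: the reduction of ``$\tau_q = \tau_m$'' to ``$\pi$ is a continuous open surjection onto $(G/H,\tau_m)$'' is exactly right (continuity gives $\tau_m \subseteq \tau_q$ since $\tau_q$ is the finest topology making $\pi$ continuous, and openness plus $V = \pi(\pi^{-1}(V))$ gives the reverse inclusion); continuity via generic continuity of definable maps plus translation-homogeneity is the usual way one shows definable homomorphisms are continuous for the manifold topologies; and the openness step via $\dim \pi(V) = \dim G - \dim H = \dim(G/H)$ together with ``full dimension implies nonempty interior'' (read in charts) is likewise the standard device. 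The two ingredients you flag as the main obstacles are indeed where all the $p$CF-specific content lives, and both are available: the fiber-dimension identity follows from the dimension theory of geometric fields applied to the coset fibration (using that the inclusion $H \hookrightarrow G$ is a topological embedding, so $V \cap gH$ open in $gH$ really has dimension $\dim H$), and the interior principle holds for definable subsets of $K^n$ of full dimension. So your proposal supplies a valid proof where the paper only supplies a pointer; I see no gap.
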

\begin{corollary} \label{short-exact}
  Let $1 \to A \to B \to C \to 1$ be a short exact sequence of
  definable groups.  Regard $A, B, C$ as definable manifolds.
  \begin{enumerate}
  \item The maps $A \to B$ and $B \to C$ are continuous.
  \item The map $A \to B$ is a closed embedding.
  \item The map $B \to C$ is an open map.
  \item \label{se4} $B$ is definably compact if and only if $A$ and $C$ are
    definably compact.
  \end{enumerate}
\end{corollary}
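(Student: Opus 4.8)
The plan is to reduce all four parts to facts already recorded for definable subgroups and their quotients. I would first fix notation: write $\iota \colon A \to B$ for the injection and $\pi \colon B \to C$ for the surjection, and put $A' = \iota(A)$, so that $A' \lhd B$ is a normal definable subgroup and $\pi$ induces a definable group isomorphism $B/A' \cong C$. Since the $C^k$-manifold structure on a definable group is unique, any definable group isomorphism is automatically a $C^k$-diffeomorphism, and in particular a homeomorphism; this applies both to $\iota \colon A \cong A'$ (with $A'$ carrying its own manifold structure as a definable group) and to $B/A' \cong C$. Thus I may transport all topological assertions freely between $A$ and $A'$ and between $B/A'$ and $C$.

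With these identifications, parts (1)--(3) are immediate. The map $\iota$ is the composite of the homeomorphism $A \cong A'$ with the inclusion $A' \hookrightarrow B$, which is a closed embedding by Fact~\ref{adm1}; this yields both the continuity of $A \to B$ and part (2). For $\pi$, the key input is that $A'$ is normal and $B/A'$ is definable, so by \cite[Proposition~5.19]{admissible} the quotient topology on $B/A'$ coincides with its manifold topology, hence with the topology on $C$. The quotient map $B \to B/A'$ is continuous and open, as noted in the discussion preceding Theorem~\ref{quot1}; composing with the homeomorphism $B/A' \cong C$ shows that $\pi$ is continuous and open, completing parts (1) and (3).

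For part (4), I would apply Theorem~\ref{quot1} to the subgroup $A' \subseteq B$: the group $B$ is definably compact if and only if $A'$ and $B/A'$ are definably compact. Because definable homeomorphisms preserve definable compactness, the homeomorphisms $A \cong A'$ and $B/A' \cong C$ turn this into the statement that $B$ is definably compact iff $A$ and $C$ are, which is the desired conclusion.

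The argument is mostly bookkeeping; the only genuine content is the identification of the quotient topology on $B/A'$ with the manifold topology on $C$, which is exactly the cited \cite[Proposition~5.19]{admissible}. The one point requiring care is that the maps in the short exact sequence are abstract homomorphisms rather than literal inclusions and quotient maps, so before quoting Fact~\ref{adm1}, Theorem~\ref{quot1}, and the quotient-topology fact one must invoke the uniqueness of the manifold structure to see that the relevant definable isomorphisms are genuinely homeomorphisms.
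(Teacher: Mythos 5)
Your proposal is correct and follows exactly the route the paper intends: the corollary is stated without proof precisely because it is the combination of Fact~\ref{adm1}, the openness/continuity of the quotient map noted before Theorem~\ref{quot1}, the cited \cite[Proposition~5.19]{admissible} identifying the quotient and manifold topologies, and Theorem~\ref{quot1} itself, transported along the definable isomorphisms $A \cong \iota(A)$ and $B/\iota(A) \cong C$. Your observation that uniqueness of the $C^k$-manifold structure makes these isomorphisms homeomorphisms is the right way to justify the bookkeeping.
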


\section{The $p$-adic Peterzil-Steinhorn theorem} \label{sec:old2}

In this section, we will prove the following theorem.
\begin{theorem} \label{main-thm-1}
    Let $G$ be a group definable in a $p$-adically closed field $K$. If $G$ is not definably compact, then $G$ contains a one-dimensional subgroup which is not definably compact.
\end{theorem}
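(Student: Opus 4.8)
The plan is to induct on $\dim G$, using the reduction machinery of Section~\ref{sec:prelim} to peel off lower-dimensional or commutative pieces and invoking the already-known abelian case of \cite{J-Y-Non-compact} as the base of the argument. The case $\dim G = 1$ is immediate, since $G$ is then a one-dimensional subgroup of itself that is not definably compact. For the inductive step, I would first replace $G$ by the centralizer-connected subgroup $G'$ of Theorem~\ref{cc-g0}: it has finite index, hence is again not definably compact by Corollary~\ref{index}, it has the same dimension as $G$, and any one-dimensional non-definably-compact subgroup of $G'$ is automatically one of $G$. So I may assume $G$ is centralizer-connected. If $G$ is abelian, the desired conclusion is exactly the main theorem of \cite{J-Y-Non-compact}.

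Suppose then that $G$ is centralizer-connected and non-abelian. Theorem~\ref{cc2} supplies $\dim Z(G) < \dim G$ and $\dim Z_G(a) < \dim G$ for every non-central $a$. I would now run the following case analysis, whose guiding principle is that each object in sight is a genuine \emph{subgroup} of $G$, so that a one-dimensional non-definably-compact subgroup produced inside it transfers back to $G$ for free (this is why I organize the argument around subgroups rather than quotients). If the center $Z(G)$ is not definably compact, then, since $Z(G)$ is abelian, the abelian case applied to $Z(G)$ yields a one-dimensional non-definably-compact subgroup lying in $G$. If instead some centralizer $Z_G(a)$ with $a \notin Z(G)$ is not definably compact, then it is a definable subgroup of strictly smaller dimension, and the induction hypothesis applied to $Z_G(a)$ finishes. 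These two cases dispatch everything except the situation in which $Z(G)$ and every non-central centralizer $Z_G(a)$ are definably compact while $G$ is not.

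For that residual situation I would bring in the short exact sequence $1 \to A \to G \xrightarrow{\Ad} H \to 1$ of Lemma~\ref{abelian-affine}, where $A = \ker(\Ad)$ is commutative-by-finite and $H = \img(\Ad)$ is affine. By Corollary~\ref{short-exact}(\ref{se4}), non-definable-compactness of $G$ forces $A$ or $H$ to be non-definably-compact. If $A$ is not definably compact, a finite-index abelian subgroup of $A$ is still not definably compact by Corollary~\ref{index}, and the abelian case again applies. The genuinely new content is thus the case in which $A$ is definably compact and the affine group $H$ is not, which reduces the whole theorem to the affine (linear) setting. I expect this affine case to be the main obstacle. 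The natural approach is to exploit the structure of $p$-adic linear algebraic groups: the Zariski closure $\overline{H}$ is a linear algebraic group whose $K$-points are unbounded, so $\overline{H}$ should contain a one-dimensional $K$-split subgroup (a copy of $\mathbb{G}_a$ or $\mathbb{G}_m$); concretely, one locates an element $h \in H$ generating an unbounded subgroup and passes to the definable \emph{commutative} group $H \cap \overline{\langle h\rangle}(K)$, which is not definably compact, whereupon the abelian case closes the argument.

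The delicate points in this last step are twofold, and together they form the heart of the difficulty. First, the one-dimensional non-definably-compact subgroup is naturally found inside $\overline{H}(K)$, whereas $H$ is only a definable subgroup of $\overline{H}(K)$ and need not contain the full $K$-points of the relevant algebraic subgroup; descending to an honest subgroup of the definable group $H$ requires care. Second, once a one-dimensional non-definably-compact subgroup $L \subseteq H$ is in hand, its preimage $\Ad^{-1}(L) \subseteq G$ has dimension $\dim A + 1$ and is not definably compact by Corollary~\ref{short-exact}(\ref{se4}); when $\dim A = 0$ this is already the desired subgroup, and when $\dim H \geq 2$ one feeds $\Ad^{-1}(L)$ back into the induction, but the degenerate case $\dim H = 1$ must be reconciled with the centralizer analysis above. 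Handling the affine case cleanly, and matching it against these low-dimensional boundary cases, is where I expect the real work to lie.
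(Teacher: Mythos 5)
Your overall skeleton matches the paper's: peel off a commutative-by-finite kernel via the adjoint representation (Lemma~\ref{abelian-affine}), quote the abelian case of \cite{J-Y-Non-compact}, and reduce everything to the affine case. But the affine case is not a loose end to be deferred --- it is the theorem --- and the mechanism you propose for it does not work. You suggest locating $h \in H$ generating an unbounded cyclic subgroup, so that the commutative group $H \cap \overline{\langle h\rangle}(K)$ fails to be definably compact. Already over $\Qq_p$ this fails: for a unipotent element $h = 1+N$ the entries of $h^n = \sum_k \binom{n}{k}N^k$ have valuation bounded below uniformly in $n \in \Zz$, so every cyclic subgroup of a unipotent group is bounded even though the group itself (the Heisenberg group, or $\Gg_a(\Qq_p)$ inside $\GL_2$) is non-compact. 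Over a saturated nonstandard $K$ the situation is worse: by saturation of the value group, \emph{every} countable subset of $K^m$ --- in particular every cyclic subgroup of every definable group --- is bounded, so no element whatsoever generates an unbounded subgroup. Your residual case is therefore untouched, and the Godement-style heuristic that $\overline{H}$ ``should contain'' a split $\Gg_a$ or $\Gg_m$ does not help, since (as you yourself note) $H$ need not meet that algebraic subgroup in anything non-compact.

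The paper's treatment of the affine case rests on two ingredients absent from your proposal. First, a reduction to the standard model $K = \Qq_p$ (Subsection~\ref{reduction}): being a counterexample is not obviously first-order, so the paper introduces \emph{special counterexamples} (Definition~\ref{special-counter}), proves that a counterexample exists iff a special one does (Theorem~\ref{to-special} --- this is where the centralizer analysis you carry out actually earns its keep), and transfers via definability of definable compactness in families. Second, over $\Qq_p$, for $H$ Zariski dense and hence open in $V(\Qq_p)$, one takes a maximal connected $\Qq_p$-split solvable algebraic subgroup $B \subseteq V$; by Platonov--Rapinchuk the coset space $V(\Qq_p)/B(\Qq_p)$ is compact, so $H/(H \cap B(\Qq_p))$ is compact (Lemma~\ref{lemma-closed-subgroup}) and $H \cap B(\Qq_p)$ is non-compact (Theorem~\ref{quot2}); one then inducts on the derived length of $B$ (Lemma~\ref{solvable}), feeding each short exact sequence into Lemma~\ref{good} and the \emph{nearly abelian} case of \cite{J-Y-Non-compact}. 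That nearly abelian form of Fact~\ref{abelian-case} is also what rescues the boundary case in your last paragraph: when $\dim H = 1$ and $\dim A > 0$, the preimage $\Ad^{-1}(L)$ may be all of $G$, so your induction does not close, but $\Ad^{-1}(L)$ has a definably compact normal subgroup with abelian quotient and Fact~\ref{abelian-case} applies directly.
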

Let $G$ be a group definable in $K$. Say that $G$ is \emph{nearly
abelian} if there is a definably compact definable normal subgroup
$O\subseteq G$ with $G/O$ abelian.
The following was proved in \cite{J-Y-Non-compact}:
\begin{fact}\label{abelian-case}
 If $G$ is not definably compact and $G$ is nearly abelian, then there is a one-dimensional definable
subgroup $H\sq G$ that is not definably compact.
\end{fact}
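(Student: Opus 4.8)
The plan is to reduce the nearly abelian case to the genuinely abelian one, i.e.\ to the abelian Peterzil--Steinhorn theorem (the instance $O = 1$, which is the main result of \cite{J-Y-Non-compact}). Concretely, I aim to produce inside $G$ a definable \emph{abelian} subgroup that is not definably compact; applying the abelian case to that subgroup then yields the desired one-dimensional non-definably-compact subgroup of $G$. To set up, replace $O$ by the smallest definable subgroup containing the commutators $[G,G]$; this is a closed, hence definably compact, normal subgroup (Fact~\ref{adm1}), and now $\bar G := G/O$ is the abelianization of $G$. Since $O$ is definably compact and $G$ is not, Theorem~\ref{quot1} shows that $\bar G$ is abelian and \emph{not} definably compact.

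I would produce the non-compact abelian subgroup by induction on $\dim O$. In the base case $\dim O = 0$, the group $O$ is finite: applying the abelian case to $\bar G$ gives a one-dimensional non-compact subgroup $\bar H \subseteq \bar G$, whose preimage $P := \pi^{-1}(\bar H)$ is one-dimensional (the kernel being finite) and not definably compact by Corollary~\ref{short-exact}(\ref{se4}); by Fact~\ref{one-dim}, $P$ is commutative-by-finite, so a finite-index---hence still non-compact (Corollary~\ref{index})---subgroup of $P$ is abelian, as required. For the inductive step, first dispose of the case that the center $Z(G)$ is not definably compact: then $Z(G)$ is itself a non-compact abelian subgroup and we are done.

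So suppose $Z(G)$ is definably compact and $\dim O > 0$. Here I would argue that, because $\bar G$ is abelian and (up to finite index) divisible and torsion-free---being assembled from one-dimensional non-compact groups---the conjugation action of $G$ on the definably compact group $O$ must be trivial, so that $O$ is central. Granting this, pick a one-dimensional definable subgroup $N \subseteq O \subseteq Z(G)$; it is central, hence normal, and $G/N$ is again nearly abelian with the lower-dimensional definably compact part $O/N$, still not definably compact (Corollary~\ref{short-exact}(\ref{se4})). By the inductive hypothesis $G/N$ has a non-compact abelian subgroup, and shrinking it via the abelian case we may take it one-dimensional, say $\bar H \subseteq G/N$. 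Its preimage $P \subseteq G$ sits in a central extension of the one-dimensional abelian group $\bar H$ by the central subgroup $N$; its commutator map is a definable, biadditive, alternating pairing out of the one-dimensional group $\bar H$, which must vanish, so $P$ is abelian. Finally $P$ is non-compact, since it surjects onto the non-compact $\bar H$ with definably compact kernel $N$ (Corollary~\ref{short-exact}(\ref{se4})). This completes the induction, and applying the abelian case to the abelian non-compact subgroup so obtained finishes the proof.

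The crux---and the step I expect to be the main obstacle---is keeping the constructed subgroup \emph{abelian} and one-dimensional while lifting, which is really a statement about the non-splitness of the extension $1 \to O \to G \to \bar G \to 1$. Unlike the o-minimal or real-Lie setting, one cannot build the non-compact one-dimensional subgroup locally from the Lie algebra, because one-parameter subgroups obtained by exponentiation in the $p$-adic setting are definably compact; the subgroup we need is a ``global section'' over a non-compact divisible direction. Making the argument rigorous rests on two lemmas that carry the real weight: (i) a one-dimensional non-compact group, being divisible and torsion-free up to finite index, cannot act nontrivially by conjugation on a definably compact group, forcing $O$ central; and (ii) a definable alternating biadditive pairing out of a one-dimensional definable group is identically trivial. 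Both should follow from genericity and dimension counting together with the rigidity of definable homomorphisms in $p$CF, but they are where the essential work lies.
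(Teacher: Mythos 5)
The paper itself offers no proof of Fact~\ref{abelian-case}: it is imported verbatim from \cite{J-Y-Non-compact}, where the nearly abelian case is established directly, so your plan of re-deriving it from the purely abelian instance $O=1$ is a genuinely different route---but as written it has real gaps, one of which is fatal. The centerpiece of your inductive step is the claim that $O$ must be central in $G$. Nothing in the definition of ``nearly abelian'' makes $O$ abelian, and if $O$ is non-abelian then even conjugation by elements of $O$ on $O$ is nontrivial, so $O$ cannot be central: $G = \mathrm{SL}_2(\Zz_p) \times \Qp$ is nearly abelian and not definably compact, with $O = \mathrm{SL}_2(\Zz_p)$ non-central. (Your $Z(G)$-non-compact branch happens to catch this particular example, but it shows your lemma (i) is false as stated, and no argument is given for the weaker statement you would actually need.) The justification offered---that $\bar G$ abelian plus divisibility/torsion-freeness ``up to finite index'' forces the action to be trivial---is a non sequitur: abelianness of $G/O$ only says $[G,G] \subseteq O$ and does not even yield an action of $\bar G$ on $O$ unless $O$ is abelian; and the structural claims about $\bar G$ (``assembled from one-dimensional non-compact groups'') presuppose the very theorem being proved. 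You concede that lemmas (i) and (ii) are ``where the essential work lies,'' and indeed (ii), the vanishing of the commutator pairing, would itself require a case analysis over the one-dimensional group types ($\Kk$-like, $\Kk^\times$-like with a $\Gamma$-direction, compact) of depth comparable to the target fact. What you have is a program, not a proof.

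There are further gaps in the scaffolding. First, definable versus interpretable: $G/O$ and $G/N$ are interpretable, not definable, and $p$CF does not eliminate imaginaries in the field sort; the abelian case of \cite{J-Y-Non-compact} and Fact~\ref{one-dim} are stated for \emph{definable} groups, so invoking them for $G/N$ with $N$ one-dimensional is unjustified (finite kernels are harmless, since fields code finite sets, but that only rescues your base case). This is exactly why the present paper distinguishes definable from interpretable and proves Theorem~\ref{quot1} for interpretable quotient topologies. Second, you ``pick a one-dimensional definable subgroup $N \subseteq O$'' of a positive-dimensional definably compact group: this existence statement is a compact analogue of Peterzil--Steinhorn and is nowhere among the cited tools---the analytic one-parameter subgroup $\exp(\Zz_p v)$ need not be semialgebraic. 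Third, the opening move is already broken: a ``smallest definable subgroup containing $[G,G]$'' need not exist (definable subgroups in $p$CF have no descending chain condition, e.g.\ $p\Oo \supseteq p^2\Oo \supseteq \cdots$), and ``closed, hence definably compact'' is false in general---closed \emph{and bounded} is what is needed, with compactness here coming only from containment in $O$; both issues are avoidable by simply working with the given $O$, but the step as written is incorrect.
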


\subsection{Reduction to the standard model $\Qq_p$} \label{reduction}
We first show that Theorem~\ref{main-thm-1} is independent of $K$, by
finding an equivalent condition which depends only on $\Th(K)$.
\begin{definition} \label{special-counter}
  Let $(G,\cdot)$ be a definable group in a $p$-adically closed field
  $K$.
  \begin{enumerate}
  \item $(G,\cdot)$ is a \emph{counterexample} if $G$ is not definably
    compact, but every one-dimensional definable subgroup of $G$ is
    definably compact.  In other words, $G$ is a counterexample to
    Theorem~\ref{main-thm-1}.
  \item $(G,\cdot)$ is a \emph{special counterexample} if $G$ is not
    definably compact, but the center $Z(G)$ is definably compact, and
    the centralizer $Z_G(a)$ is definably compact for any $a \in G
    \setminus Z(G)$.
  \end{enumerate}
\end{definition}
\begin{lemma} \label{unspecial}
  If $G$ is a special counterexample, then $G$ is a counterexample.
\end{lemma}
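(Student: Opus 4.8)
The plan is to take an arbitrary one-dimensional definable subgroup $H \subseteq G$ and show it must be definably compact; since $G$ itself is not definably compact by hypothesis, this exhibits every one-dimensional subgroup as definably compact while $G$ is not, which is exactly the definition of a counterexample.

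The first step is to reduce to the abelian case. I would apply Theorem~\ref{cc-g0} to $H$ itself, letting $H'$ be the intersection of all finite-index centralizers in $H$; this is a definable, finite-index, centralizer-connected subgroup of $H$. By Corollary~\ref{index} it suffices to show $H'$ is definably compact. I claim $H'$ is in fact abelian. Note $\dim H' = \dim H = 1$ since $H'$ has finite index. If $H'$ were non-abelian, then Theorem~\ref{cc2} would give $\dim Z(H') < 1$ and $\dim Z_{H'}(a) < 1$ for every non-central $a$, so the center $Z(H')$ and all centralizers of non-central elements of $H'$ would be finite. But a one-dimensional group is commutative-by-finite by Fact~\ref{one-dim}, so $H'$ has an abelian subgroup $N$ of finite index; since $H'$ is infinite, $N$ is infinite, and since $Z(H')$ is finite there is some $a \in N \setminus Z(H')$. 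As $N$ is abelian and $a \in N$, we get $N \subseteq Z_{H'}(a)$, forcing $Z_{H'}(a)$ to be infinite and contradicting the previous paragraph. Hence $H'$ is abelian.

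With $H'$ abelian and one-dimensional, I would split into two cases according to the special-counterexample hypothesis of Definition~\ref{special-counter}. If $H' \subseteq Z(G)$, then $H'$ is a definable subgroup of the definably compact group $Z(G)$, and since the inclusion is a closed embedding (Fact~\ref{adm1}), $H'$ is a closed definable subset of a definably compact set, hence definably compact. Otherwise pick $a \in H' \setminus Z(G)$; because $H'$ is abelian we have $H' \subseteq Z_G(a)$, and $Z_G(a)$ is definably compact by the special-counterexample hypothesis, so again $H'$ is a closed definable subgroup of a definably compact group and therefore definably compact. Running this back through Corollary~\ref{index} shows $H$ is definably compact, completing the argument since $H$ was arbitrary.

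The only mildly delicate point is the intermediate claim that a one-dimensional centralizer-connected group is abelian, where Fact~\ref{one-dim} and Theorem~\ref{cc2} must be combined to derive the contradiction; everything else is routine bookkeeping with the fact that a closed definable subset of a definably compact set is definably compact, together with the closedness of definable subgroups. I do not expect any real obstacle, but if the reduction via Theorem~\ref{cc-g0} is to be avoided one would instead need to produce a \emph{definable} finite-index abelian subgroup of $H$ directly, which is why routing through the centralizer-connected subgroup $H'$ seems cleanest.
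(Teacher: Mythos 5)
Your proof is correct and follows essentially the same route as the paper's: reduce to a definable abelian finite-index subgroup of the one-dimensional group, then use abelianness to place it inside $Z(G)$ or some $Z_G(a)$ with $a$ in the subgroup, both of which are definably compact by the special-counterexample hypothesis. The only differences are cosmetic (a direct argument rather than one by contradiction) and your more careful construction, via Theorems~\ref{cc-g0} and \ref{cc2}, of a \emph{definable} abelian finite-index subgroup, a point the paper handles by citing Fact~\ref{one-dim} directly.
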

\begin{proof}
  Otherwise, there is a 1-dimensional definably non-compact subgroup
  $H \subseteq G$.  By Fact~\ref{one-dim}, there is a finite-index
  abelian definable subgroup $H' \subseteq H$.  Then $H'$ is not
  definably compact (Corollary~\ref{index}).  Replacing $H$ with $H'$,
  we may assume that $H$ is abelian.  By Corollary~\ref{short-exact}(\ref{se4}),
  $H$ cannot be contained in any definably compact definable subgroups
  of $G$.  In particular, $H \not \subseteq Z(G)$.  Take $a \in H
  \setminus Z(G)$.  Then $H \not \subseteq Z_G(a)$, which means that
  $H$ is non-abelian, a contradiction.
\end{proof}
\begin{lemma} \label{xor}
  If $G$ is a counterexample and $H$ is a definable subgroup, then $H$
  is definably compact or $H$ is a counterexample.
\end{lemma}
\begin{proof}
  Any 1-dimensional definably non-compact subgroup of $H$ would be a
  1-dimensional definably non-compact subgroup of $G$.
\end{proof}
\begin{lemma} \label{index-ct}
  Let $G$ be a definable group and $H$ be a definable subgroup of
  finite index.  Then $H$ is a counterexample if and only if $G$ is a
  counterexample.
\end{lemma}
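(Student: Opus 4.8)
The plan is to prove the two implications separately, using Corollary~\ref{index} (which says a finite-index subgroup is definably compact iff the ambient group is) as the main tool in both directions, together with the fact that intersecting a one-dimensional subgroup with $H$ preserves one-dimensionality.

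For the forward direction, suppose $G$ is a counterexample. Then $G$ is not definably compact, and since $H$ has finite index in $G$, Corollary~\ref{index} gives that $H$ is not definably compact either. But $H$ is a definable subgroup of the counterexample $G$, so Lemma~\ref{xor} tells us $H$ is either definably compact or a counterexample; having ruled out the first option, $H$ is a counterexample.

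For the reverse direction, suppose $H$ is a counterexample. Then $H$ is not definably compact, so again by Corollary~\ref{index} neither is $G$. It remains to check that every one-dimensional definable subgroup $L \subseteq G$ is definably compact. First I would pass to $L_0 = L \cap H$. Since $[G:H] < \infty$ we have $[L : L_0] \le [G:H] < \infty$, so $L_0$ has finite index in $L$; in particular $\dim(L_0) = \dim(L) = 1$, so $L_0$ is a one-dimensional definable subgroup of $H$. Because $H$ is a counterexample, $L_0$ is definably compact, and then Corollary~\ref{index} applied to the finite-index inclusion $L_0 \subseteq L$ yields that $L$ is definably compact. As $L$ was arbitrary, $G$ is a counterexample.

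There is no serious obstacle here; the only point requiring a moment's care is in the reverse direction, where one must confirm that $L \cap H$ is still one-dimensional (so that the counterexample hypothesis on $H$ applies) and that Corollary~\ref{index} is being invoked in the correct inclusion at each step---once to transfer definable compactness downward from $L_0$ to conclude about $L_0$ inside $H$, and once to transfer it back up from $L_0$ to $L$.
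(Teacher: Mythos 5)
Your proof is correct and follows essentially the same route as the paper: both directions hinge on Corollary~\ref{index} for transferring definable (non-)compactness across finite-index inclusions, and on the observation that $L \cap H$ has finite index in $L$ (hence is still one-dimensional) to pass one-dimensional subgroups from $G$ down to $H$. Your invocation of Lemma~\ref{xor} in the forward direction is just a repackaging of the paper's one-line observation that a one-dimensional non-compact subgroup of $H$ is also one of $G$.
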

\begin{proof}
  By Corollary~\ref{index}, $G$ is definably compact if and only if
  $H$ is.  Suppose neither group is definably compact.  If $C$ is a
  1-dimensional definably non-compact subgroup of $H$, then $C$ is
  also a 1-dimensional definably non-compact subgroup of $G$.
  Conversely, if $C$ is a 1-dimensional definably non-compact subgroup
  of $G$, then $C \cap H$ is a 1-dimensional subgroup of $H$ which is
  definably non-compact because it has finite index in $C$, using
  Corollary~\ref{index} again.
\end{proof}

\begin{theorem} \label{to-special}
  For a fixed $p$-adically closed field $K$, the following are
  equivalent:
  \begin{enumerate}
  \item There is a counterexample $G$.
  \item There is a special counterexample $G$.
  \end{enumerate}
\end{theorem}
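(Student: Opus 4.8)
The plan is to observe first that the implication (2)\,$\Rightarrow$\,(1) is already supplied by Lemma~\ref{unspecial}, so the entire content of the theorem lies in proving (1)\,$\Rightarrow$\,(2): given that some counterexample exists, I must manufacture a \emph{special} counterexample. My strategy is to take a counterexample of \emph{minimal dimension} and show, after a finite-index adjustment, that it is already special. Minimality is the engine that converts the qualitative fact ``lower-dimensional subgroups cannot be new counterexamples'' into the quantitative conclusion ``centralizers are definably compact.''

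Concretely, I would start with a counterexample $G$ whose dimension $\dim(G)$ is least among all counterexamples in $K$ (such a minimum exists since dimensions are bounded natural numbers and the set of counterexamples is nonempty by hypothesis). By Theorem~\ref{cc-g0} there is a finite-index centralizer-connected definable subgroup $G' \subseteq G$, and by Lemma~\ref{index-ct} this $G'$ is again a counterexample; since $\dim(G') = \dim(G)$, it still has minimal dimension. Replacing $G$ by $G'$, I may assume $G$ is centralizer-connected. The next point to record is that $G$ must be \emph{non-abelian}: if $G$ were abelian it would be nearly abelian (take $O = \{1\}$), and then Fact~\ref{abelian-case} would produce a one-dimensional definably non-compact subgroup, contradicting the fact that $G$ is a counterexample. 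With $G$ centralizer-connected and non-abelian, Theorem~\ref{cc2} applies and yields the crucial dimension drops
\begin{equation*}
  \dim(Z_G(a)) < \dim(G) \text{ for all } a \in G \setminus Z(G), \qquad \dim(Z(G)) < \dim(G).
\end{equation*}

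To finish I would combine these inequalities with Lemma~\ref{xor} and minimality. For $a \in G \setminus Z(G)$, the subgroup $Z_G(a)$ is by Lemma~\ref{xor} either definably compact or itself a counterexample; but a counterexample of dimension strictly below $\dim(G)$ would violate minimality, so $Z_G(a)$ is definably compact. The identical argument applied to $Z(G)$ (which is definable and of dimension $<\dim(G)$) shows $Z(G)$ is definably compact. Since $G$ is not definably compact, these two facts exhibit $G$ as a special counterexample, completing (1)\,$\Rightarrow$\,(2). I do not anticipate a serious obstacle here, as all the heavy lifting has been packaged into the preceding lemmas; the only genuinely delicate point is the \emph{order of operations}—one must pass to the centralizer-connected subgroup \emph{before} invoking Theorem~\ref{cc2}, because without centralizer-connectedness a centralizer $Z_G(a)$ can have finite index and hence full dimension, and the dimension-drop argument would collapse.
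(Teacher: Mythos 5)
Your proposal is correct and follows essentially the same argument as the paper: take a counterexample of minimal dimension, pass to the centralizer-connected finite-index subgroup via Theorem~\ref{cc-g0} and Lemma~\ref{index-ct}, rule out the abelian case with Fact~\ref{abelian-case}, and then use Theorem~\ref{cc2} together with Lemma~\ref{xor} and minimality to conclude that $Z(G)$ and the centralizers $Z_G(a)$ are definably compact. No gaps.
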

\begin{proof}
  (2)$\implies$(1) is Lemma~\ref{unspecial}.  For (1)$\implies$(2),
  suppose there is at least one counterexample.  Take a counterexample
  $G$ minimizing $\dim(G)$.  By Theorem~\ref{cc-g0} and
  Lemma~\ref{index-ct}, we may replace $G$ with a finite index
  subgroup and assume that $G$ is centralizer-connected.  If $G$ is
  abelian, then $G$ is \emph{not} a counterexample, by
  Fact~\ref{abelian-case}.  Therefore $G$ is non-abelian, and
  Theorem~\ref{cc2} applies, showing that
  \begin{gather*}
    \dim(Z(G)) < \dim(G) \\
    \dim(Z_G(a)) < \dim(G) \text{ for } a \in G \setminus Z(G).
  \end{gather*}
  Then $Z(G)$ and $Z_G(a)$ are not counterexamples, by choice of $G$.
  By Lemma~\ref{xor}, they must be definably compact, making $G$ be a
  special counterexample.
\end{proof}
\begin{remark}
  Let $\{G_t\}_{t \in X}$ be a definable family of definable groups.
  \begin{enumerate}
  \item The set $\{t \in X : G_t \text{ is definably compact}\}$ is
    definable \cite[Theorem~6.6]{admissible}.
  \item The set $\{t \in X : G_t \text{ is a special
    counterexample}\}$ is definable.  This follows almost immediately
    from the previous point.
  \end{enumerate}
\end{remark}
Consequently, if $K \equiv \Qq_p$, then there is a special
counterexample in $K$ if and only if there is a special counterexample
in $\Qq_p$.\footnote{To see this, embed $K$ and $\Qq_p$ both into a
highly saturated monster model $\Kk$.  If $G$ is a special
counterexample in $K$, it sits inside a 0-definable family $\{G_t\}_{t
  \in X}$ of definable groups.  The set $\{t \in X : G_t \text{ is a
  special counterexample}\}$ is definable and $\Aut(\Kk)$-invariant,
hence 0-definable.  Then it must contain a $\Qq_p$-definable point by
Tarski-Vaught, and so there is a special counterexample over $\Qq_p$.
The other direction is similar.}  By Theorem~\ref{to-special}, $K$ has
a counterexample if and only if $\Qq_p$ has a counterexample.
Therefore, \emph{in Theorem~\ref{main-thm-1}, we may assume that $K =
\Qq_p$}.

\subsection{The case of $\Qq_p$}
Now assume that $K$ is $\Qq_p$.  We prove Theorem~\ref{main-thm-1} by
induction on the dimension of $G$.  The assumption that $K = \Qq_p$
will only be used in the proof of Lemma~\ref{affine-case}.

Say that a definable group $G$ is \emph{good} if it is not a
counterexample to Theorem~\ref{main-thm-1}, meaning that either $G$ is
definably compact, or $G$ has a 1-dimensional definably non-compact
subgroup.
\begin{lemma} \label{good}
  Suppose that $1 \to A \to B \to C \to 1$ is a short exact sequence
  of definable groups, and $A$ and $C$ are good.  Then $B$ is good.
\end{lemma}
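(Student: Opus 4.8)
The plan is to first dispose of the trivial case and then split on which of $A$ or $C$ is responsible for the failure of definable compactness of $B$. If $B$ is definably compact it is good by definition, so I would assume $B$ is not definably compact. By Corollary~\ref{short-exact}(\ref{se4}), $B$ fails to be definably compact precisely when $A$ or $C$ fails to be definably compact; hence at least one of the two is not definably compact, giving two cases. Throughout, let $\pi : B \to C$ denote the given surjection.

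In the first case, $A$ is not definably compact. Since $A$ is good, it contains a one-dimensional definably non-compact subgroup $H$. Because the inclusion $A \hookrightarrow B$ is a closed embedding (Corollary~\ref{short-exact}), the canonical manifold topology on $H$ agrees whether $H$ is viewed inside $A$ or inside $B$, so $H$ is a one-dimensional definably non-compact subgroup of $B$ and $B$ is good. This case is routine.

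The substantive case is when $A$ is definably compact but $C$ is not. Here $C$, being good, contains a one-dimensional definably non-compact subgroup $\bar H$, and I would pull it back to $H := \pi^{-1}(\bar H) \subseteq B$, which sits in a short exact sequence
\[
1 \to A \to H \to \bar H \to 1 .
\]
The obstacle is that $\dim(H) = \dim(A) + 1$, so the preimage is not itself one-dimensional once $A$ is infinite; simply lifting the subgroup does not directly produce the subgroup we want. The key idea is to observe that $H$ is \emph{nearly abelian}. After replacing $\bar H$ by a finite-index abelian definable subgroup (which exists by Fact~\ref{one-dim} and remains definably non-compact by Corollary~\ref{index}), we may assume $\bar H$ is abelian; then $A$ is a definably compact normal subgroup of $H$ with abelian quotient $H/A \cong \bar H$, so $H$ is nearly abelian. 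Since $\bar H$ is not definably compact, neither is $H$ by Corollary~\ref{short-exact}(\ref{se4}), so Fact~\ref{abelian-case} yields a one-dimensional definably non-compact subgroup of $H$, which is also a subgroup of $B$. Hence $B$ is good.

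I expect the reduction to the nearly-abelian situation---recognizing that $\pi^{-1}(\bar H)$ is nearly abelian after making $\bar H$ abelian---to be the crux of the argument, since it is what converts the higher-dimensional preimage into something handled by Fact~\ref{abelian-case}. The remaining points (the topology on subgroups via the closed-embedding facts, and preservation of non-compactness under finite index) are bookkeeping drawn directly from Corollaries~\ref{short-exact} and~\ref{index}.
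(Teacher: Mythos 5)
Your proof is correct and follows essentially the same route as the paper: dispose of the case where $B$ (or $A$) is definably non-compact directly, then in the remaining case pull back an abelian one-dimensional definably non-compact subgroup of $C$ to a nearly abelian, definably non-compact preimage and apply Fact~\ref{abelian-case}. The only cosmetic difference is that you spell out some topological bookkeeping that the paper leaves implicit.
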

\begin{proof}
  If $B$ is definably compact, there is nothing to prove.  Suppose $B$
  is definably non-compact.  If $A$ is definably non-compact, it has a
  1-dimensional definably non-compact subgroup $X$, which shows that
  $B$ is good.  Otherwise, $A$ is definably compact.  Then $C$ is
  definably non-compact by Corollary~\ref{short-exact}.  As $C$ is
  good, there is a 1-dimensional definably non-compact subgroup $X
  \subseteq C$.  By Fact~\ref{one-dim}, we may replace $X$ with a
  finite index subgroup, and assume that $X$ is abelian.  Let $X^*
  \subseteq B$ be the preimage of $X$ under $B \to C$.  The short
  exact sequence $1 \to A \to X^* \to X \to 1$ shows that $X^*$ is
  definably non-compact (by Corollary~\ref{short-exact}) and nearly
  abelian (as $X$ is abelian and $A$ is definably compact).  By the
  nearly abelian case (Fact~\ref{abelian-case}), $X^*$ has a
  1-dimensional definably non-compact subgroup, which shows that $B$
  is good.
\end{proof}

\begin{lemma} \label{solvable}
Suppose that $G$ is a definable group contained in a solvable linear
algebraic group $B(\Qp)$.  Then $G$ is good: if $G$ is non-compact, then
$G$ has a one-dimensional definable subgroup which is not definably
compact.
\end{lemma}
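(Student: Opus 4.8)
The plan is to induct on the dimension of the ambient algebraic group $B$, splitting off the abelianization of $B$ through its derived subgroup and invoking Lemma~\ref{good}. Throughout I may assume $G$ is not definably compact, since otherwise $G$ is good by definition.

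First I would reduce to the case that $B$ is Zariski-connected. The identity component $B^\circ$ is a closed normal $\Qp$-subgroup of finite index, so $B^\circ(\Qp)$ has finite index in $B(\Qp)$, and therefore $G \cap B^\circ(\Qp)$ has finite index in $G$. By Lemma~\ref{index-ct}, $G$ is good if and only if $G \cap B^\circ(\Qp)$ is, so I may replace $B$ by $B^\circ$ without changing $\dim B$. For the base case, if $B$ is commutative then $G$ is an abelian definable group, hence nearly abelian (take the trivial definably compact normal subgroup), and Fact~\ref{abelian-case} supplies a one-dimensional definably non-compact subgroup of $G$; thus $G$ is good. In particular this disposes of $\dim B = 0$, where $G$ is finite and hence compact.

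For the inductive step, suppose $B$ is connected and non-abelian, and set $N = [B,B]$. Because $\characteristic(\Qp) = 0$, the derived subgroup $N$ is a closed connected normal subgroup defined over $\Qp$; since $B$ is solvable and non-abelian, $N$ is proper and nontrivial, so $0 < \dim N < \dim B$. Both $N$ and $B/N$ are solvable linear algebraic groups over $\Qp$, with $B/N$ commutative. Letting $\pi \colon B \to B/N$ denote the quotient map, I would take $A = G \cap N(\Qp)$ and $C = \pi(G) \subseteq (B/N)(\Qp)$, obtaining a short exact sequence of definable groups
\[
1 \to A \to G \xrightarrow{\pi} C \to 1 .
\]
Here $C$ is abelian, hence good by Fact~\ref{abelian-case}, while $A$ is a definable subgroup of the lower-dimensional solvable linear algebraic group $N(\Qp)$ and so is good by the inductive hypothesis. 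Lemma~\ref{good} then yields that $G$ is good, closing the induction.

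I expect no serious obstacle here; the content is organizational, together with a correct import of the structure theory of solvable linear algebraic groups over the non-algebraically-closed field $\Qp$. The points meriting care are that in characteristic zero $[B,B]$ really is a closed connected normal $\Qp$-subgroup with $B/[B,B]$ commutative, and that the algebraic short exact sequence $1 \to N \to B \to B/N \to 1$ descends to the definable one displayed above. For the latter, $A$ is definable as an intersection of definable sets and $C$ is definable as the image (projection) of a definable set; crucially, one does not need $B(\Qp) \to (B/N)(\Qp)$ to be surjective, since taking $C$ to be the genuine image is exactly what Lemma~\ref{good} requires.
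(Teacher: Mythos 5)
Your proof is correct and takes essentially the same route as the paper's: split $G$ along the derived subgroup of $B$ to get a short exact sequence whose kernel is handled by induction and whose abelian image is handled by Fact~\ref{abelian-case}, then conclude with Lemma~\ref{good}. The only difference is that the paper inducts on the derived length of $B$ rather than on $\dim B$, which makes your (correctly identified as necessary) preliminary reduction to Zariski-connected $B$ unnecessary.
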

\begin{proof}
  Proceed by induction on the solvable length of $B$, the length of
  the derived series.  If the derived length is $\le 1$, then $B$ and
  $G$ are abelian, and $G$ is good by the abelian case
  (Fact~\ref{abelian-case}).  Otherwise, there is a normal algebraic
  subgroup $B_1 \subseteq B$ such that the algebraic groups $C :=
  B/B_1$ and $B_1$ have lower solvable length.  Let $f : G \to C(\Qp)$
  be the composition
  \begin{equation*}
    G \hookrightarrow B(\Qp) \to C(\Qp).
  \end{equation*}
  The kernel is $G \cap B_1(\Qp)$, which is good by induction.  The
  image is a definable subgroup of $C(\Qp)$, which is good by induction.
  By Lemma~\ref{good}, $G$ is good.
\end{proof}

Recall from Definition~\ref{def-aff} that a definable group is
\emph{affine} if it is a subgroup of some linear algebraic group
$G(\Qp)$.  Lemma~\ref{solvable} shows that certain affine groups are
good, and we next generalize this to all affine groups.  But first, we
need a lemma.

\begin{lemma}\label{lemma-closed-subgroup}
Let $D$, $H$ and $A$ be topological groups, with $H$ an open subgroup of
$D$, and $A$ a subgroup of $D$.
\begin{enumerate}
\item In the quotient topology on $D/A$, the subset $H/(A \cap H)$ is
  clopen.
\item The quotient topology on $H/(A \cap H)$ as a quotient of $H$
  agrees with the subspace topology as a subspace of $D/A$.
\item If the quotient topology on $D/A$ is compact, then the quotient
  topology on $H/(A \cap H)$ is compact.
\end{enumerate}
\end{lemma}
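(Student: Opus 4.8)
The plan is to analyze everything through the canonical quotient map $q : D \to D/A$, $q(d) = dA$, together with the natural injection
\[
j : H/(A \cap H) \to D/A, \qquad h(A \cap H) \mapsto hA.
\]
This is well defined and injective because for $h_1, h_2 \in H$ one has $h_1 A = h_2 A$ iff $h_2^{-1} h_1 \in A \cap H$, and its image is exactly $q(H)$. The whole lemma then reduces to two facts about $q(H) \subseteq D/A$: that it is clopen, and that $j$ is a homeomorphism onto it. I will use throughout that $q$ is an open map, which holds because $q^{-1}(q(U)) = UA = \bigcup_{a \in A} Ua$ is open whenever $U \subseteq D$ is open.

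For part (1), I would note that $q^{-1}(q(H)) = HA$, so by definition of the quotient topology it suffices to show that $HA$ is clopen in $D$. Since $H$ is an open subgroup it is also closed (its complement $\bigcup_{g \notin H} gH$ is a union of open cosets), hence clopen. Now $D$ is partitioned into the double cosets $HdA = \bigcup_{a \in A} Hda$, each of which is open as a union of right translates of the open set $H$; since the double cosets partition $D$, each is therefore also closed. As $HA$ is the double coset of the identity, it is clopen, which gives part (1).

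For part (2), write $q_H : H \to H/(A \cap H)$ for the quotient map, so that $q|_H = j \circ q_H$. This identity shows $j$ is continuous, by the universal property of the quotient topology on $H/(A \cap H)$. To see that $j$ is open onto its image, take an open $V \subseteq H/(A \cap H)$; then $q_H^{-1}(V)$ is open in $H$, hence open in $D$ because $H$ is open in $D$, and $j(V) = q\bigl(q_H^{-1}(V)\bigr)$ is open in $D/A$ since $q$ is an open map. Thus $j$ is a continuous open bijection onto $q(H)$, i.e.\ a homeomorphism onto the subspace $q(H)$, establishing part (2). Part (3) is then immediate: by part (1) the set $q(H)$ is closed in the compact space $D/A$, hence compact, and by part (2) it is homeomorphic to $H/(A \cap H)$.

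The only genuinely delicate step, and the one I would be most careful with, is the openness half of part (2): it is precisely here that the hypothesis ``$H$ open in $D$'' is indispensable, since it is what lets me upgrade $q_H^{-1}(V)$ from open in $H$ to open in $D$ before applying openness of $q$. Everything else is routine topological-group bookkeeping, and in particular part (3) requires no new work once parts (1) and (2) are in hand.
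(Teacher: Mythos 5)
Your proof is correct and follows essentially the same route as the paper: part (1) via the double cosets $HdA$ being open (unions of right translates of the open subgroup $H$) and hence clopen since they partition $D$, part (2) by comparing preimages in $D$ (your use of the open-map property of $q$ is just a repackaging of the paper's explicit identity $Q' = Q\cdot A$), and part (3) as an immediate consequence. No gaps; the one point you flag as delicate --- upgrading $q_H^{-1}(V)$ from open in $H$ to open in $D$ --- is exactly where the paper also invokes openness of $H$.
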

\begin{proof}
  \begin{enumerate}
  \item Note that $D$ acts on $D/A$ on the left, and the subset $H/(A
    \cap H)$ is $\{hA : h \in H\}$, which is the $H$-orbit of $1A \in
    D/A$.  It suffices to show that every $H$-orbit is open, which
    implies then that every $H$-orbit is closed.

    Let $\pi : D \to D/A$ be the quotient map $\pi(x) = xA$.  For any
    $d \in D$, the $H$-orbit of $dA \in D/A$ is $\{hdA : h \in H\}$,
    whose preimage under $\pi$ is $\{hda : h \in H,~ a \in A\} = HdA$.
    This set is open, because it is a union of right-translates of the
    open subgroup $H$.  By definition of the quotient topology, $\{hdA
    : h \in H\}$ is open in $D/A$.
  \item Let $S$ be a subset of $H/(A \cap H)$.  Let $Q = \{h \in H :
    \pi(x) \in S\}$.  Then $S$ is open in the quotient topology if and
    only if $Q$ is open in $H$ or equivalently in $D$.  As $H/(A \cap
    H)$ is open in $D/A$, $S$ is open in the subspace topology if and
    only if $S$ is open as a subset of $D/A$, meaning that $Q' = \{d
    \in D : \pi(d) \in S\}$ is open.  So we must show that $Q$ is open
    if and only if $Q'$ is open.  Note that $Q = Q' \cap H$, and $H$
    is open, so openness of $Q'$ implies openness of $Q$.
    \begin{claim}
      $Q' = Q \cdot A$.
    \end{claim}
    \begin{claimproof}
      If $h \in Q$ and $a \in A$, then $\pi(ha) = haA = hA = \pi(h)
      \in S$, so $ha \in Q'$.  Conversely, suppose $d \in Q'$.  Then
      $\pi(d) = dA \in S$.  As $S \subseteq H/(H \cap A) = \pi(H)$, we
      have $\pi(d) = \pi(h)$ for some $h \in H$.  The fact that $dA =
      hA$ means that $d = ha$ for some $a \in A$.  Also, $\pi(h) =
      \pi(d) \in S$, so $h \in Q$.  Then $d = ha \in Q \cdot A$.
    \end{claimproof}
    If $Q$ is open, then $Q'$ is open, being a union of
    right-translates of $Q$.
  \item This follows from the previous two points---if $D/A$ is
    compact, then the closed subspace $H/(H \cap A)$ is compact, and
    this space is homeomorphic to the quotient space $H/(H \cap
    A)$. \qedhere
  \end{enumerate}
\end{proof}

\begin{lemma} \label{affine-case}
  If $G$ is affine, then $G$ is good.
\end{lemma}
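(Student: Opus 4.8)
The plan is to induct on $\dim G$. If $G$ is definably compact it is good by definition, so assume $G$ is not definably compact; since $K = \Qq_p$, this means $G$ is not compact in the topological sense. Write $G \subseteq \GL_n(\Qq_p)$ and let $\mathbf{H}$ be the Zariski closure of $G$, a linear algebraic group over $\Qq_p$ with $\dim \mathbf{H} = \dim G$. Because $G$ is a definable subgroup of $\mathbf{H}(\Qq_p)$ of full dimension, Fact~\ref{adm1} makes $G$ a clopen, hence open, subgroup of $\mathbf{H}(\Qq_p)$; this openness is exactly what will let us invoke Lemma~\ref{lemma-closed-subgroup}. Passing to $G \cap \mathbf{H}^\circ(\Qq_p)$, which has finite index in $G$, and using Corollary~\ref{index} and Lemma~\ref{index-ct}, I may assume $\mathbf{H}$ is connected (this changes neither $\dim G$ nor the openness, and the Zariski closure of the smaller group is still $\mathbf{H}$).

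Next I would strip away normal subgroups. Let $R$ be the radical of $\mathbf{H}$. If $\mathbf{H}$ is solvable, then $G$ is good by Lemma~\ref{solvable}. If $\mathbf{H}$ is not solvable but carries a proper nontrivial connected normal $\Qq_p$-subgroup $N$ (for instance $N = R$ when $R \ne 1$, or a simple factor when $\mathbf{H}$ is semisimple but not $\Qq_p$-almost-simple), then the short exact sequence
\[
1 \to G \cap N(\Qq_p) \to G \to \pi(G) \to 1, \qquad \pi : \mathbf{H} \to \mathbf{H}/N,
\]
has affine ends of dimension strictly less than $\dim G$; these are good by the inductive hypothesis, so $G$ is good by Lemma~\ref{good}. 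This reduces to the case where $\mathbf{H}$ is connected, not solvable, and has no proper nontrivial connected normal $\Qq_p$-subgroup, i.e.\ $\mathbf{H}$ is $\Qq_p$-almost-simple and semisimple. If moreover $\mathbf{H}$ is $\Qq_p$-anisotropic, then $\mathbf{H}(\Qq_p)$ is compact, so the closed subgroup $G$ is compact and hence good.

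The heart of the argument is the remaining case: $\mathbf{H}$ is $\Qq_p$-almost-simple, semisimple, and isotropic, so it has a proper parabolic subgroup $\mathbf{P} \subsetneq \mathbf{H}$ defined over $\Qq_p$. Here $\mathbf{H}/\mathbf{P}$ is a complete (projective) variety, so $(\mathbf{H}/\mathbf{P})(\Qq_p)$ is compact and therefore $\mathbf{H}(\Qq_p)/\mathbf{P}(\Qq_p)$ is compact in the quotient topology. Applying Lemma~\ref{lemma-closed-subgroup} with $D = \mathbf{H}(\Qq_p)$, the open subgroup $G$, and $A = \mathbf{P}(\Qq_p)$, I conclude that $G/(G \cap \mathbf{P}(\Qq_p))$ is compact. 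Since $G$ itself is not compact while this coset space is, the closed subgroup $S := G \cap \mathbf{P}(\Qq_p)$ must be non-compact (for a locally compact group, compactness of a closed subgroup together with compactness of the coset space would force compactness of the whole group). Now $S$ is an affine definable group with $\dim S \le \dim \mathbf{P} < \dim \mathbf{H} = \dim G$, so by the inductive hypothesis $S$ is good; being non-compact, $S$ contains a one-dimensional non-compact subgroup, which is then the desired subgroup of $G$.

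The main obstacle is this isotropic almost-simple case, and specifically the two ingredients that make it work: the compactness of $\mathbf{H}(\Qq_p)/\mathbf{P}(\Qq_p)$ and the fact that $G$ can be arranged to be \emph{open} in $\mathbf{H}(\Qq_p)$ via the Zariski-closure reduction, without which Lemma~\ref{lemma-closed-subgroup} would not apply. The parabolic $\mathbf{P}$ is not normal, so one cannot form a quotient group or apply Lemma~\ref{good} directly; the trick is instead to transport the non-compactness of $G$ into the strictly smaller affine group $G \cap \mathbf{P}(\Qq_p)$ and recurse. This is also the unique point where $K = \Qq_p$ is essential: we use genuine (local) compactness of the $\Qq_p$-points of complete varieties and of anisotropic groups, together with the equivalence of definable and topological compactness over $\Qq_p$.
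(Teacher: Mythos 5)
Your proof is correct, and the engine in the hard case is the same as the paper's: produce an algebraic subgroup $\mathbf{B}$ of the Zariski closure $\mathbf{H}$ with $\mathbf{H}(\Qq_p)/\mathbf{B}(\Qq_p)$ compact, use the openness of $G$ together with Lemma~\ref{lemma-closed-subgroup} to get compactness of $G/(G\cap\mathbf{B}(\Qq_p))$, and push the non-compactness of $G$ into $G\cap\mathbf{B}(\Qq_p)$ via Theorem~\ref{quot2}. The difference is which $\mathbf{B}$ you choose and how much structure theory you need to get there. The paper takes $\mathbf{B}$ to be a maximal connected $\Qq_p$-split solvable subgroup; Platonov--Rapinchuk Theorem~3.1 gives compactness of $\mathbf{H}(\Qq_p)/\mathbf{B}(\Qq_p)$ for \emph{any} connected linear algebraic group, so there is no case division, no induction on dimension, and the non-compact group $G\cap\mathbf{B}(\Qq_p)$ lands directly in the already-proved solvable case (Lemma~\ref{solvable}). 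You instead run an induction on $\dim G$, peel off the radical and the $\Qq_p$-simple factors with Lemma~\ref{good}, dispose of the anisotropic case by compactness of $\mathbf{H}(\Qq_p)$, and only then use a parabolic as your $\mathbf{B}$. This costs you the reduction to the almost-simple isotropic case (Galois-stability of the simple factors, surjectivity of $\mathbf{H}(\Qq_p)\to(\mathbf{H}/\mathbf{P})(\Qq_p)$ for parabolics so that the coset space is really compact, etc.), all of which is standard but has to be checked; what it buys is that the only compactness inputs are $\Qq_p$-points of complete varieties and of anisotropic semisimple groups. One small slip: after passing to $G\cap\mathbf{H}^{\circ}(\Qq_p)$, the Zariski closure of the new group is $\mathbf{H}^{\circ}$, not $\mathbf{H}$ --- which is of course what you want anyway.
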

\begin{proof}
  Suppose $G$ is a definable subgroup of $V(\Qp)$ for some linear algebraic
  group $V$ over $\Qp$.  Replacing $V$ with the Zariski closure of
  $G$, we may assume that $G$ is Zariski dense in $V$.  Then $G$ is
  open in $V(\Qp)$, because $G$ and $V(\Qp)$ have the same dimension as definable groups.  Let $B$ be a maximal connected $K$-split solvable
  algebraic subgroup of $V$.  By Theorem~3.1 of \cite{P-R-AG-book},
  the quotient space $V(\Qp)/B(\Qp)$ is compact.  By
  Lemma~\ref{lemma-closed-subgroup}, $G/(G \cap B(\Qp))$ is compact.
  If $G$ is compact, then
  $G$ is good.  Suppose $G$ is not compact.  Theorem~\ref{quot2} shows that $G \cap
  B(\Qp)$ is non-compact.  The group $G \cap B(\Qp)$ is also definably non-compact, since compactness and definable compactness agree for definable manifolds over the standard model \cite[Remark~2.12]{J-Y-Non-compact}.
  
  On the other hand, $G \cap B(\Qp)$ is good by
  Lemma~\ref{solvable}, so it contains a 1-dimensional non-compact
  definable subgroup.  Then $G$ has a 1-dimensional non-compact
  definable subgroup, as desired.
\end{proof}
Finally, we can complete the proof of the $p$-adic Peterzil-Steinhorn
theorem:
\begin{proof}[Proof (of Theorem~\ref{main-thm-1})]
  By Subsection~\ref{reduction}, we may assume $K = \Qq_p$.  Let $G$
  be a definable group in $\Qp$.  Lemma~\ref{abelian-affine} gives a
  short exact sequence of definable groups
  \begin{equation*}
    1 \to A \to G \to H \to 1
  \end{equation*}
  where $A$ is abelian-by-finite and $H$ is affine.  The group $A$ is
  good by the abelian case (Fact~\ref{abelian-case}, together with
  Lemma~\ref{index-ct}), and the group $H$ is good by the affine case
  (Lemma~\ref{affine-case}).  Then $G$ is good by Lemma~\ref{good}.
\end{proof}

\section{Compact Hausdorff groups and $G/G^{00}$} \label{chg-sec}
In this section, we review some facts about compact Hausdorff groups,
and apply them to the groups $G/G^{00}$.
\begin{fact} \phantomsection \label{kfact}
  \begin{enumerate}
  \item \label{kf1} A compact Hausdorff group $G$ is profinite if and
    only if $G$ is totally disconnected \cite[Theorem~1.34]{h-m}.
  \item \label{kf2} If $G$ is profinite and $f : G \to H$ is a
    continuous surjection onto another compact Hausdorff group $H$,
    then $H$ is profinite \cite[Exercise~E1.13]{h-m}.
  \item \label{kf3} If $G$ is a compact Hausdorff group, then the
    family of continuous homomorphisms from $G$ to the orthogonal
    groups $\mathrm{O}(2), \mathrm{O}(3), \ldots$ separates points
    \cite[Corollary~2.28]{h-m}.
  \item \label{kf4} Any closed, compact subgroup of $\mathrm{O}(n)$ is
    a compact Lie group \cite[Corollary~2.40, Definition~2.41]{h-m}.
  \item \label{kf5} Any profinite compact Lie group is finite
    \cite[Exercise~E2.8]{h-m}.
  \item \label{kf6} If $G$ is a non-discrete compact Lie group, then
    $G$ has a non-trivial one-parameter subgroup, meaning a
    non-trivial continuous homomorphism $\Rr \to G$
    \cite[Theorem~5.41(iv)]{h-m} (see also Definition~5.7 and
    Proposition~5.33(iv) in \cite{h-m}).
  \end{enumerate}
\end{fact}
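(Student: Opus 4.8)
The plan is to verify each of the six items separately, since they are classical results in the structure theory of compact groups; the two substantial external inputs are the Peter--Weyl theorem and Cartan's closed-subgroup theorem. For item (\ref{kf1}), the forward direction is immediate, as an inverse limit of finite discrete groups is totally disconnected. For the converse, given a compact Hausdorff totally disconnected $G$, I would use the topological fact that the clopen sets form a basis in such a space, refine a clopen neighborhood of $1$ into an open normal subgroup of finite index (using compactness to pass to the core, the intersection of the finitely many conjugates), and finally identify $G$ with $\varprojlim_N G/N$ over the open normal subgroups $N$. For item (\ref{kf2}), since $G$ is compact and $H$ is Hausdorff, $f$ is a closed quotient map, so $H \cong G/K$ with $K = \ker f$ closed and normal; I would then exhibit $G/K$ as the inverse limit $\varprojlim_N G/(KN)$ of the finite groups $G/(KN)$, using that $\bigcap_N KN = K$.

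Item (\ref{kf3}) is the step I expect to be the main obstacle, as it genuinely requires the representation theory of compact groups. It is essentially the Peter--Weyl theorem: the finite-dimensional unitary representations of a compact Hausdorff group separate points. Passing from a unitary representation to its underlying real orthogonal representation produces continuous homomorphisms into the groups $\mathrm{O}(n)$, and this family separates points of $G$.

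The remaining items are short consequences of Lie theory. For item (\ref{kf4}), Cartan's closed-subgroup theorem shows that a closed subgroup of the Lie group $\mathrm{O}(n)$ is itself a Lie subgroup, and it is automatically compact as a closed subset of the compact group $\mathrm{O}(n)$. For item (\ref{kf5}), a profinite group is totally disconnected, so a profinite Lie group has trivial identity component and hence dimension $0$; a $0$-dimensional Lie group is discrete, and a discrete compact group is finite. For item (\ref{kf6}), a non-discrete compact Lie group has positive dimension, so its Lie algebra contains a nonzero vector $X$, and $t \mapsto \exp(tX)$ is a nontrivial continuous one-parameter subgroup $\Rr \to G$. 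Since the paper uses these only as black boxes cited from \cite{h-m}, the actual argument is to quote those references; the sketches above record the standard reasoning behind each, with Peter--Weyl being the one genuinely nontrivial ingredient.
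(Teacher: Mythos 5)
Your proposal is correct, and it matches the paper's treatment: the paper offers no proof of this Fact, simply citing the indicated results in Hofmann--Morris, and your sketches accurately record the standard arguments behind each citation (Peter--Weyl for item (3), Cartan's closed-subgroup theorem for item (4), and the routine inverse-limit and dimension arguments for the rest). Nothing further is needed.
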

\begin{corollary} \label{cor:prof3}
  Let $1 \to A \to B \to C \to 1$ be a continuous short exact sequence
  of compact Hausdorff groups.  Then $B$ is profinite if and only if
  $A$ and $C$ are profinite.
\end{corollary}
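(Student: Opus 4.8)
The plan is to prove the two implications separately, relying mainly on the characterization of profiniteness via total disconnectedness (Fact~\ref{kfact}(\ref{kf1})) and the fact that profiniteness passes to continuous quotient images (Fact~\ref{kfact}(\ref{kf2})). Throughout, I use that the continuous injection $A \to B$ of compact Hausdorff groups is a closed embedding, so that $A$ may be regarded as a closed normal subgroup of $B$ with $C \cong B/A$.

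For the forward direction, suppose $B$ is profinite. The map $B \to C$ is a continuous surjection of compact Hausdorff groups, so $C$ is profinite by Fact~\ref{kfact}(\ref{kf2}). For $A$: by Fact~\ref{kfact}(\ref{kf1}), $B$ is totally disconnected; total disconnectedness is inherited by subspaces, so the closed subgroup $A$ is totally disconnected, and being compact Hausdorff it is profinite by Fact~\ref{kfact}(\ref{kf1}) again.

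The reverse direction is the substantive one. Assume $A$ and $C$ are profinite; I want to show $B$ is totally disconnected, whence profinite by Fact~\ref{kfact}(\ref{kf1}). Let $B^{\circ}$ denote the connected component of the identity in $B$. Its image under $B \to C$ is a connected subgroup of $C$ containing $1_C$; since $C$ is profinite, hence totally disconnected (Fact~\ref{kfact}(\ref{kf1})), this image is $\{1_C\}$, so $B^{\circ}$ lies in the kernel $A$. But $A$ is profinite, hence totally disconnected, and $B^{\circ}$ is a connected subset of $A$ containing the identity, forcing $B^{\circ} = \{1_B\}$. Thus $B$ is totally disconnected and therefore profinite.

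The main obstacle, such as it is, lies in the reverse direction, namely the observation that the identity component $B^{\circ}$ maps into $A$ because a connected set maps to a connected---hence singleton---subset of the totally disconnected group $C$. This route avoids Fact~\ref{kfact}(\ref{kf3})--(\ref{kf6}) entirely; those facts are presumably needed elsewhere for the finer analysis of $G/G^{00}$. The only points requiring care are topological bookkeeping: that $A \to B$ is a closed embedding, that total disconnectedness is hereditary to subspaces, and that a connected subset of a totally disconnected space containing a given point is that single point.
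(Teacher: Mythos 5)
Your proof is correct and follows essentially the same route as the paper's: the forward direction is identical, and in the reverse direction the paper likewise argues that a connected component of $B$ maps to a point of the totally disconnected group $C$ and hence sits inside a coset of the totally disconnected group $A$. The only cosmetic difference is that the paper works with an arbitrary connected component directly, whereas you work with the identity component $B^{\circ}$ and then implicitly invoke homogeneity of topological groups (every component is a coset of $B^{\circ}$) to conclude total disconnectedness --- a standard step, but worth stating.
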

\begin{proof}
  If $B$ is profinite, then $C$ is profinite by
  Fact~\ref{kfact}(\ref{kf2}), $B$ is totally disconnected by
  Fact~\ref{kfact}(\ref{kf1}), the subspace $A$ is totally
  disconnected, and then $A$ is profinite by
  Fact~\ref{kfact}(\ref{kf1}).

  Conversely, suppose $A$ and $C$ are profinite, or equivalently,
  totally disconnected.  Any connected component $X \subseteq B$ maps
  onto a connected set in $C$, which must be a single point.  Then $X$
  is contained in a coset of $A$, but each such coset is totally
  disconnected because $A$ is.  Therefore $X$ is a point, and $B$ is
  totally disconnected, hence profinite.
\end{proof}
\begin{corollary} \label{image}
  Let $G$ be a compact Hausdorff group.  Then $G$ is profinite if and
  only if every continuous homomorphism $f : G \to \mathrm{O}(n)$ has finite
  image.
\end{corollary}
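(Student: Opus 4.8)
The plan is to prove the two implications separately, using the equivalence between profiniteness and total disconnectedness recorded in Fact~\ref{kfact}(\ref{kf1}).

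For the forward direction, suppose $G$ is profinite and let $f : G \to \mathrm{O}(n)$ be a continuous homomorphism. First I would observe that the image $f(G)$ is compact, being the continuous image of the compact group $G$, and therefore closed in the Hausdorff group $\mathrm{O}(n)$. Restricting the codomain, $f$ becomes a continuous surjection onto the compact Hausdorff group $f(G)$, so $f(G)$ is profinite by Fact~\ref{kfact}(\ref{kf2}). On the other hand, $f(G)$ is a closed compact subgroup of $\mathrm{O}(n)$, hence a compact Lie group by Fact~\ref{kfact}(\ref{kf4}). A group that is simultaneously profinite and a compact Lie group is finite by Fact~\ref{kfact}(\ref{kf5}), so $f(G)$ is finite, as desired.

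For the backward direction, suppose every continuous homomorphism $f : G \to \mathrm{O}(n)$ has finite image. By Fact~\ref{kfact}(\ref{kf1}) it suffices to show that $G$ is totally disconnected, i.e.\@ that the connected component $C$ of the identity is trivial. Given any such $f$, the image $f(G)$ is a finite subgroup of the Hausdorff group $\mathrm{O}(n)$, hence discrete, so $\ker f = f^{-1}(\{1\})$ is an open, and therefore clopen, subgroup of $G$. Since $C$ is connected and contains $1 \in \ker f$ while $\ker f$ is clopen, we get $C \subseteq \ker f$. Thus $C$ is contained in $\bigcap_f \ker f$, the intersection taken over all continuous homomorphisms into the various $\mathrm{O}(n)$. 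But Fact~\ref{kfact}(\ref{kf3}) says these homomorphisms separate points, which is precisely the statement that $\bigcap_f \ker f = \{1\}$. Hence $C = \{1\}$, so $G$ is totally disconnected and therefore profinite.

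Both directions are short given the machinery assembled in Fact~\ref{kfact}, so I do not anticipate a serious obstacle. The only point requiring a little care is in the backward direction: one must notice that finiteness of the image forces the kernel to be clopen (using Hausdorffness of $\mathrm{O}(n)$), and that a connected set meeting a clopen subgroup must lie inside it, so that the point-separation of Fact~\ref{kfact}(\ref{kf3}) collapses the identity component to a single point. The content is essentially a bookkeeping assembly of the listed facts.
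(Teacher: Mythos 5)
Your proof is correct. The forward direction is exactly the paper's argument: image is profinite by Fact~\ref{kfact}(\ref{kf2}), a compact Lie group by (\ref{kf4}), hence finite by (\ref{kf5}). The backward direction, however, takes a genuinely different route. The paper enumerates all continuous homomorphisms $f_i : G \to \mathrm{O}(n_i)$, forms the product map $\prod_i f_i : G \to \prod_i \im(f_i)$, uses Fact~\ref{kfact}(\ref{kf3}) to see this is injective (hence a closed embedding, by compactness of $G$), and concludes that $G$ is a closed subgroup of a product of finite groups and therefore profinite. You instead argue internally: each finite image is discrete in the Hausdorff group $\mathrm{O}(n)$, so each kernel is clopen, the identity component is contained in every kernel, and point-separation forces the intersection of the kernels to be trivial; then Fact~\ref{kfact}(\ref{kf1}) converts total disconnectedness into profiniteness. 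Both arguments are sound and of comparable length. The paper's version trades on the standard facts that products of finite groups and closed subgroups of profinite groups are profinite (not listed in Fact~\ref{kfact}, but essentially the definition); yours stays entirely within the facts the paper explicitly records, at the cost of invoking the standard observations that the identity component of a topological group determines total disconnectedness and that a connected set meeting a clopen subgroup lies inside it. Either is acceptable.
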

\begin{proof}
  If $G$ is profinite and $f : G \to \mathrm{O}(n)$ is a continuous
  homomorphism, then the image is profinite by
  Fact~\ref{kfact}(\ref{kf2}), a compact Lie group by (\ref{kf4}), and
  finite by (\ref{kf5}).

  Conversely, suppose every continuous homomorphism from $G$ to an
  orthogonal group has finite image.  Let $\{f_i\}_{i \in I}$
  enumerate all continuous homomorphisms $f_i : G \to \mathrm{O}(n_i)$.  By
  assumption, $\im(f_i)$ is finite for each $i$.  Consider the product
  homomorphism
  \begin{equation*}
    \prod_{i \in I} f_i : G \to \prod_{i \in I} \im(f_i).
  \end{equation*}
  By Fact~\ref{kfact}(\ref{kf3}), this map is injective, hence an
  embedding.  Then $G$ is a closed subgroup of the profinite group
  $\prod_{i \in I} \im(f_i)$, so $G$ is itself profinite.
\end{proof}
\begin{corollary} \label{cor:non-torsion}
  Let $G$ be an infinite compact subgroup of $\mathrm{O}(n)$.  Then
  $G$ contains a non-torsion element.
\end{corollary}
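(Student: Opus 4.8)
The plan is to recognize $G$ as a non-discrete compact Lie group, extract a nontrivial one-parameter subgroup via Fact~\ref{kfact}(\ref{kf6}), and then show that the image of any such subgroup already contains an element of infinite order.

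First I would note that a compact subset of the Hausdorff group $\mathrm{O}(n)$ is automatically closed, so $G$ is a closed compact subgroup of $\mathrm{O}(n)$ and hence a compact Lie group by Fact~\ref{kfact}(\ref{kf4}). Since $G$ is infinite, it cannot be zero-dimensional, as a zero-dimensional compact Lie group is a finite discrete manifold; therefore $G$ is non-discrete. Fact~\ref{kfact}(\ref{kf6}) then provides a nontrivial continuous homomorphism $\phi : \Rr \to G$.

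Next I would analyze $\phi$ using the classification of closed subgroups of $\Rr$. The kernel $\ker\phi$ is a closed subgroup of $\Rr$, and since $\phi$ is nontrivial it is not all of $\Rr$, so it is either $\{0\}$ or of the form $\lambda\Zz$ for some $\lambda > 0$. The computation underlying both cases is that $\phi(t)^m = \phi(mt)$, because $\phi$ is a homomorphism out of the additive group $\Rr$; this reduces ``$\phi(t)$ has finite order'' to ``$t$ lies in a rational multiple of $\ker\phi$'', and it sidesteps the non-abelianity of $\mathrm{O}(n)$ since the relevant subgroup $\phi(\Rr)$ is abelian. If $\ker\phi = \{0\}$, then $\phi(1)^m = \phi(m) \ne 1$ for every $m \ne 0$, so $\phi(1)$ has infinite order. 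If $\ker\phi = \lambda\Zz$, I would pick an irrational $\alpha$ and consider $\phi(\lambda\alpha)$: if $\phi(\lambda\alpha)^m = \phi(m\lambda\alpha) = 1$ for some $m \ne 0$, then $m\lambda\alpha \in \lambda\Zz$, forcing $m\alpha \in \Zz$, which contradicts the irrationality of $\alpha$. Either way, $G$ contains a non-torsion element.

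I do not anticipate a serious obstacle. The two points requiring care are confirming that $G$ is non-discrete so that Fact~\ref{kfact}(\ref{kf6}) is applicable, and invoking the classification of closed subgroups of $\Rr$ to split into the injective and circle cases. An alternative to the kernel analysis, which I would fall back on if desired, is a Baire category argument: if every element of $G$ were torsion, then the closed subgroups $\{t \in \Rr : \phi(nt) = 1\} = \tfrac{1}{n}\ker\phi$ would cover $\Rr$, so one of them would have nonempty interior and thus equal $\Rr$, forcing $\phi$ to be trivial and yielding the same contradiction.
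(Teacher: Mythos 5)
Your proposal is correct and follows essentially the same route as the paper: reduce to a compact Lie group via Fact~\ref{kfact}(\ref{kf4}), note that infiniteness forces non-discreteness, and extract a non-trivial one-parameter subgroup $\phi : \Rr \to G$ via Fact~\ref{kfact}(\ref{kf6}). The only divergence is the final step, where you invoke the classification of closed subgroups of $\Rr$ to exhibit an explicit non-torsion element ($\phi(1)$ or $\phi(\lambda\alpha)$ for irrational $\alpha$), whereas the paper runs the Baire category argument on the cover $\Rr = \bigcup_n n^{-1}\ker(\phi)$ --- which is precisely the fallback you describe at the end --- so both variants are fine and yours has the minor advantage of producing the element explicitly.
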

\begin{proof}
  By Fact~\ref{kfact}(\ref{kf4}), $G$ is a compact Lie group.  Since
  $G$ is infinite, it is non-discrete.  By
  Fact~\ref{kfact}(\ref{kf6}), there is a non-trivial continuous
  homomorphism $f : \Rr \to G$.  For $n \ge 1$ let $S_n$ be the closed
  subgroup \[ S_n = \{t \in \Rr : f(t)^n = 1\} = \{t \in \Rr : f(nt) =
  1\} = n^{-1} \ker(f).\] If every element of $G$ is torsion, then
  $\bigcup_{n = 1}^\infty S_n = \Rr$, and so some $S_n$ has non-empty
  interior by Baire category.  But then $S_n$ is a clopen subgroup of
  $\Rr$, so $S_n = \Rr$.  As $S_n = n^{-1} \ker(f)$, this implies
  $\ker(f) = \Rr$, contradicting the non-triviality of $f$.
\end{proof}
Recall that if $G$ is a definable group in a highly saturated
structure and $G^{00}$ exists, then the quotient $G/G^{00}$ is
naturally a compact Hausdorff group with respect to the \emph{logic
topology}, the topology where a set $X \subseteq G/G^{00}$ is closed
iff its preimage in $G$ is type-definable.  Similarly, $G/G^0$ is a
compact Hausdorff group under its logic topology.  The group $G^0$ is
the intersection of the definable subgroups of finite index, and these
correspond to the clopen subgroups of $G/G^{00}$, so the group $G/G^0$
is precisely the maximal profinite quotient of $G/G^{00}$.  In
particular, $G^0 = G^{00}$ if and only if $G/G^{00}$ is already
profinite.

In light of this, we get the following corollaries of the facts above.
\begin{corollary} \label{2-to-3}
  Let $1 \to A \to B \to C \to 1$ be a short exact sequence of
  definable groups in an NIP theory.  If $A^0 = A^{00}$ and $C^0 =
  C^{00}$, then $B^0 = B^{00}$.
\end{corollary}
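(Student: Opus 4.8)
The plan is to reduce everything to the characterization recorded in the discussion just before the corollary: for a definable group $G$ one has $G^0 = G^{00}$ exactly when the compact Hausdorff group $G/G^{00}$ is profinite. Thus the hypotheses say that $A/A^{00}$ and $C/C^{00}$ are profinite, and the goal is to prove that $B/B^{00}$ is profinite. Write the maps of the sequence as $\iota : A \to B$ and $\pi : B \to C$. The strategy is to produce a short exact sequence of compact Hausdorff groups with $B/B^{00}$ in the middle and profinite outer terms, and then quote Corollary~\ref{cor:prof3}. Everything hinges on two compatibility facts between the connected components and the maps: (i) $A^{00} \subseteq B^{00}$, and (ii) $\pi(B^{00}) = C^{00}$.

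For (i), observe that $A \cap B^{00}$ is a type-definable subgroup of $A$ whose index is at most $[B : B^{00}]$, since the cosets of $A \cap B^{00}$ in $A$ inject into the cosets of $B^{00}$ in $B$; hence $A \cap B^{00}$ has bounded index in $A$ and therefore contains $A^{00}$, giving $A^{00} \subseteq B^{00}$. For (ii), the inclusion $\pi(B^{00}) \subseteq C^{00}$ is immediate from $B^{00} \subseteq \pi^{-1}(C^{00})$ together with minimality of $B^{00}$. For the reverse inclusion I would write $B^{00}$ as a directed intersection of definable symmetric sets $X_i$ with the property that for each $i$ there is $j$ with $X_j X_j \subseteq X_i$; then the sets $\pi(X_i)$ are definable, symmetric, and satisfy the same filtration property, so $\bigcap_i \pi(X_i)$ is a type-definable subgroup of bounded index. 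A saturation argument identifies it with $\pi(B^{00})$: if $c \in \bigcap_i \pi(X_i)$, then the partial type asserting $x \in X_i$ for all $i$ and $\pi(x) = c$ is finitely satisfiable, hence realized by some $b \in B^{00}$ with $\pi(b) = c$. Being type-definable of bounded index, $\pi(B^{00})$ must then contain $C^{00}$, so $\pi(B^{00}) = C^{00}$.

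With (i) and (ii) established, $\pi$ induces a continuous surjection $\bar\pi : B/B^{00} \to C/C^{00}$ of compact Hausdorff groups whose kernel $N := \pi^{-1}(C^{00})/B^{00}$ is closed, hence a compact Hausdorff group in its own right, yielding a short exact sequence
\[ 1 \to N \to B/B^{00} \to C/C^{00} \to 1. \]
Since $C^0 = C^{00}$, the quotient $C/C^{00}$ is profinite, so by Corollary~\ref{cor:prof3} it suffices to show $N$ is profinite. Because $A^{00} \subseteq B^{00}$ by (i), the composite $A \hookrightarrow B \to B/B^{00}$ kills $A^{00}$ and so factors through a continuous homomorphism $A/A^{00} \to B/B^{00}$; its image lies in $N$ because $\pi \circ \iota$ is trivial, and it surjects onto $N$ exactly because (ii) guarantees that every coset of $B^{00}$ in $\pi^{-1}(C^{00})$ has a representative in $A$. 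Thus $N$ is a continuous image of the profinite group $A/A^{00}$, hence profinite by Fact~\ref{kfact}(\ref{kf2}). Applying Corollary~\ref{cor:prof3} to the displayed sequence shows $B/B^{00}$ is profinite, i.e. $B^0 = B^{00}$.

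The genuinely substantive step is fact (ii), the pushforward equality $\pi(B^{00}) = C^{00}$: it is what makes $\bar\pi$ exact with kernel $N$ and what forces the map $A/A^{00} \to N$ to be surjective, and it is the only point requiring more than formal manipulation — either the saturation/filtration argument sketched above or an appeal to the standard literature on model-theoretic connected components. The remaining ingredients — continuity of the induced maps in the logic topology, closedness of $N$, and the bookkeeping with profinite quotients — are routine.
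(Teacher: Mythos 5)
Your proof is correct and follows essentially the same route as the paper, which simply cites Lemma~2.2 of \cite{jy-abelian}: that lemma likewise builds the induced short exact sequence $1 \to N \to B/B^{00} \to C/C^{00} \to 1$ of compact Hausdorff groups (using $\pi(B^{00}) = C^{00}$ and $A^{00} \subseteq B^{00}$) and then invokes Corollary~\ref{cor:prof3}. Your write-up is just a self-contained reconstruction of that argument, and all the steps, including the saturation argument for $\pi(B^{00}) = C^{00}$, check out.
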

\begin{proof}
  See Lemma~2.2 in \cite{jy-abelian}, which assumed
  Corollary~\ref{cor:prof3} above.
\end{proof}
\begin{remark} \label{finite-g00-rem}
  Let $A \subseteq B$ be definable groups in an NIP theory, with $|B :
  A| < \infty$.  Then $A^0 = A^{00}$ iff $B^0 = B^{00}$.  Indeed, the
  fact that $B$ has finite index implies that $B^0 = A^0$ and $B^{00}
  = A^{00}$.
\end{remark}
\begin{corollary} \label{reducts}
  Let $\Mm$ be a monster model of an NIP $\mathcal{L}$-theory, and let
  $\mathcal{L}_0$ be a sublanguage.  Let $G$ be a group definable in
  the reduct $\Mm \restriction \mathcal{L}_0$.  If $G^0 = G^{00}$ in
  $\Mm$, then $G^0 = G^{00}$ in $\Mm \restriction \mathcal{L}_0$.
\end{corollary}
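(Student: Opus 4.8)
The plan is to compare the two connected components by building a continuous surjection between the corresponding quotient groups and then invoking the fact that a continuous quotient of a profinite group is profinite, Fact~\ref{kfact}(\ref{kf2}). Write $\Mm_0 = \Mm \restriction \mathcal{L}_0$. Since $\mathcal{L}_0 \subseteq \mathcal{L}$, every $\mathcal{L}_0$-definable or $\mathcal{L}_0$-type-definable set is also $\mathcal{L}$-(type-)definable, and $\Mm_0$ is itself a monster model of an NIP theory (reducts of NIP theories are NIP, and saturation and homogeneity pass to the reduct). Hence $G^0$ and $G^{00}$ make sense whether computed in $\Mm$ or in $\Mm_0$; I will write $G^0_{\Mm}, G^{00}_{\Mm}$ and $G^0_{\Mm_0}, G^{00}_{\Mm_0}$ to distinguish the two.

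First I would record the basic inclusion $G^{00}_{\Mm} \subseteq G^{00}_{\Mm_0}$. This holds because the bounded-index type-definable subgroups available in $\Mm_0$ form a subfamily of those available in $\Mm$, so the intersection taken in $\Mm$ is the smaller group. Consequently there is a well-defined surjective group homomorphism $\phi : G/G^{00}_{\Mm} \to G/G^{00}_{\Mm_0}$ induced by the identity on $G$, and both source and target are compact Hausdorff under their logic topologies.

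Next I would check that $\phi$ is continuous. Let $C \subseteq G/G^{00}_{\Mm_0}$ be closed; its preimage $P \subseteq G$ under $G \to G/G^{00}_{\Mm_0}$ is $\mathcal{L}_0$-type-definable. This $P$ is a union of cosets of $G^{00}_{\Mm_0}$, hence also a union of cosets of the smaller group $G^{00}_{\Mm}$, and it is \emph{a fortiori} $\mathcal{L}$-type-definable. The preimage of $\phi^{-1}(C)$ under $G \to G/G^{00}_{\Mm}$ is exactly $P$, which is $\mathcal{L}$-type-definable, so $\phi^{-1}(C)$ is closed in $G/G^{00}_{\Mm}$. Thus $\phi$ is a continuous surjection of compact Hausdorff groups.

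Finally, the hypothesis $G^0_{\Mm} = G^{00}_{\Mm}$ says precisely that $G/G^{00}_{\Mm}$ is profinite, via the description of $G/G^0$ as the maximal profinite quotient of $G/G^{00}$ recalled just before the corollary. Applying Fact~\ref{kfact}(\ref{kf2}) to $\phi$, the image $G/G^{00}_{\Mm_0}$ is profinite as well, which is equivalent to $G^0_{\Mm_0} = G^{00}_{\Mm_0}$, as desired. The only step demanding care is the continuity verification for $\phi$, since everything else is a direct comparison of the two families of (type-)definable subgroups; I do not anticipate a serious obstacle there.
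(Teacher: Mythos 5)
Your proposal is correct and follows essentially the same route as the paper: observe that $G^{00}$ computed in the reduct contains $G^{00}$ computed in $\Mm$ (since $\mathcal{L}_0$-type-definable bounded-index subgroups are among the $\mathcal{L}$-type-definable ones), so $G/G^{00}_{\Mm_0}$ is a continuous quotient of the profinite group $G/G^{00}_{\Mm}$, and Fact~\ref{kfact}(\ref{kf2}) finishes. The only difference is that you spell out the continuity of the induced surjection in the logic topology, which the paper leaves implicit.
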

\begin{proof}
  Let $H_1$ and $H_2$ be $G^{00}$ in
  $\Mm$ and $G^{00}$ in $\Mm \restriction \mathcal{L}_0$,
  respectively.  In the original structure $\Mm$, the group $H_2$ is a
  type-definable subgroup of $G$ of bounded index, so $H_2 \supseteq
  H_1$.  Then $G/H_2$ is a quotient of $G/H_1$, so profiniteness of
  $G/H_1$ implies profiniteness of $G/H_2$ by
  Fact~\ref{kfact}(\ref{kf2}).
\end{proof}
\begin{corollary} \label{to-abelian}
  Let $G$ be a definable group in a monster model $\Mm$ of an NIP
  theory.  Suppose $G^0 \ne G^{00}$.  Then there is an abelian
  definable subgroup $H \subseteq G$ such that $H^0 \ne H^{00}$.
\end{corollary}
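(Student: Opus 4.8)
The plan is to prove Corollary~\ref{to-abelian} by reducing to the abelian case using the structural results already available, in particular Lemma~\ref{abelian-affine} and the behavior of the connected components under short exact sequences (Corollary~\ref{2-to-3}). The hypothesis $G^0 \ne G^{00}$ says that $G/G^{00}$ is a compact Hausdorff group that is \emph{not} profinite. By Corollary~\ref{image}, this is witnessed by a continuous homomorphism from $G/G^{00}$ to some orthogonal group $\mathrm{O}(n)$ with infinite image; by Corollary~\ref{cor:non-torsion}, that infinite image contains a non-torsion element. The intuition is that this non-profiniteness must ultimately originate from an abelian piece, because the affine part is rigid enough and abelian-by-finite groups are handled by the abelian case together with Remark~\ref{finite-g00-rem}.

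\medskip

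Concretely, I would first apply Lemma~\ref{abelian-affine} to obtain a short exact sequence
\[
1 \to A \to G \to H \to 1,
\]
where $A$ is commutative-by-finite and $H$ is affine. By the contrapositive of Corollary~\ref{2-to-3}, since $G^0 \ne G^{00}$, at least one of $A$ or $H$ must satisfy $(-)^0 \ne (-)^{00}$. I would then split into two cases. If the obstruction lives in $A$, I am almost done: $A$ is abelian-by-finite, so it has a finite-index abelian definable subgroup $A'$, and by Remark~\ref{finite-g00-rem} we have $A'^0 \ne A'^{00}$ as well; taking $H = A'$ gives the desired abelian subgroup. If instead the obstruction lives in the affine group $H$, I must produce the abelian witness from within an affine group, which is where the real work lies.

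\medskip

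For the affine case, the key point is that affine groups are linear, so Theorem~\ref{intro3} applies and tells us that \emph{every} definable subgroup of a linear algebraic group satisfies $(-)^0 = (-)^{00}$. This means the case $H^0 \ne H^{00}$ with $H$ affine simply cannot occur, so the obstruction is forced to live in the abelian-by-finite kernel $A$, completing the reduction. The main subtlety to check is whether the proof of Corollary~\ref{to-abelian} is meant to be logically \emph{prior} to Theorem~\ref{intro3} or may invoke it; since the excerpt places this corollary before Section~\ref{sec:last} where Theorem~\ref{intro3} is proved, I would instead argue by induction on $\dim(G)$, peeling off the short exact sequence and recursing into whichever of $A$ or $H$ carries the obstruction, with the base case being a one-dimensional group (which is commutative-by-finite by Fact~\ref{one-dim}, hence reducible to the abelian case). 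The inductive descent terminates because passing to $A$ or $H$ either strictly lowers the dimension or lands in an abelian-by-finite group directly.

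\medskip

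The hard part will be ensuring the induction is well-founded when the obstruction lands in the affine quotient $H$ without circular reliance on Theorem~\ref{intro3}: I would need to invoke the structure of $H$ as a subgroup of a linear algebraic group and descend along the derived or a composition series (as in Lemma~\ref{solvable}) until reaching an abelian layer, each step preserving the inequality $(-)^0 \ne (-)^{00}$ via Corollary~\ref{2-to-3}. Keeping track of which layer inherits the non-profiniteness, and confirming that the dimension or algebraic complexity strictly decreases at each recursive call, is the delicate bookkeeping that makes the argument work.
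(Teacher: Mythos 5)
Your proposal has a genuine gap, and it takes a route quite different from (and less workable than) the paper's. The most basic problem is that Corollary~\ref{to-abelian} is stated for a monster model of an \emph{arbitrary} NIP theory, whereas Lemma~\ref{abelian-affine} (the adjoint-representation decomposition $1 \to A \to G \to H \to 1$) is specific to $p$-adically closed fields: it needs the definable $C^k$-manifold structure on $G$ and the tangent space at the identity. So the very first step of your reduction is unavailable in the generality in which the statement is made. Even restricting attention to $p$CF, your handling of the affine case does not close. You correctly notice that invoking Theorem~\ref{intro3} would be circular --- the proof of Theorem~\ref{qpan-aff} \emph{begins} by applying Corollary~\ref{to-abelian} to reduce to the abelian case --- but the proposed repair fails on two counts. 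First, the induction on $\dim(G)$ is not well-founded: the kernel of $\Ad$ can be finite (e.g.\ when $Z(G)$ is finite), so $H = \img(\Ad)$ can have the same dimension as $G$ and recursing into $H$ need not decrease anything. Second, descending ``along the derived or a composition series until reaching an abelian layer'' is impossible when the Zariski closure of the affine group is semisimple (say, a Zariski-dense definable subgroup of $\mathrm{SL}_2(\Kk)$): such groups admit no normal series with abelian layers, which is precisely why the paper needs Corollary~\ref{to-abelian} as an \emph{input} to the affine case rather than the other way around.

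The paper's actual proof is short, direct, and uses only the compact-group facts from Section~\ref{chg-sec} plus one group-theoretic trick. Since $G/G^{00}$ is not profinite, Corollary~\ref{image} gives a continuous $f : G/G^{00} \to \mathrm{O}(n)$ with infinite image, and Corollary~\ref{cor:non-torsion} gives a non-torsion element $a = f(g)$ of that image. One then takes $H = Z(Z_G(g))$, the center of the centralizer of $g$: this is an abelian definable subgroup containing $g$, and the composite $H/H^{00} \to G/G^{00} \to \mathrm{O}(n)$ has image containing the infinite group $\langle a \rangle$, so $H/H^{00}$ is not profinite and $H^0 \ne H^{00}$. No decomposition of $G$ and no induction is needed; the right move is to locate an abelian subgroup \emph{through} a well-chosen element rather than to carve $G$ into abelian and affine pieces.
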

\begin{proof}
  The group $G/G^{00}$ isn't profinite so there is a continuous
  homomorphism $f : G/G^{00} \to \mathrm{O}(n)$ with infinite image
  (Corollary~\ref{image}).  By Corollary~\ref{cor:non-torsion}, there
  is some $a \in \mathrm{O}(n)$ such that $a$ isn't torsion.  Write
  $a$ as $f(g)$ for some $g \in G$.  Then $f(g^n) = a^n \ne 1$ for all
  $n$.  Let $H$ be the center of the centralizer of $g$.  Then $H$ is
  an abelian definable subgroup of $G$ and $g \in H$.  Let $f' :
  H/H^{00} \to \mathrm{O}(n)$ be the composition
  \begin{equation*}
    H/H^{00} \to G/G^{00} \stackrel{f}{\to} \mathrm{O}(n).
  \end{equation*}
  Then $f'$ is a continuous homomorphism, and $f'(g) = a$.  Again,
  $f'(g^n) = a^n$ for all $n$.  Then the image of $f'$ contains the
  infinite cyclic group $\langle a \rangle$, and so $H/H^{00}$ isn't
  profinite (Corollary~\ref{image}).
\end{proof}
\begin{warning}
  It might appear that we can now complete the proof of
  Theorem~\ref{intro2} as follows.  Suppose for the sake of
  contradiction that $G$ is a $\Qp$-definable group $G$ in a highly
  saturated elementary extension $\Kk \succeq \Qp$, and $G^0 \ne
  G^{00}$.  Applying Corollary~\ref{to-abelian} we get an abelian
  definable subgroup $H \subseteq G$ such that $H^0 \ne H^{00}$,
  contradicting the abelian case of Theorem~\ref{intro2}.  But the
  abelian case of Theorem~\ref{intro2} was proven previously
  \cite[Theorem~4.2]{jy-abelian}.

  This proof doesn't work, because the group $H$ from
  Corollary~\ref{to-abelian} might not be $\Qp$-definable, and then
  the abelian case of Theorem~\ref{intro2} won't be applicable.
\end{warning}

\section{$G^0$ vs $G^{00}$} \label{sec:last}
In this section, we verify Theorems~\ref{intro2}--\ref{intro3}
comparing $G^0$ and $G^{00}$.  Our strategy will be to first consider
the expansion of $p$CF by restricted analytic functions.  In this
expansion, there are exponential and logarithm maps for abelian
$p$-adic Lie groups, which allow us to locally identify algebraic tori
with vector groups, simplifying the problem and mostly reducing to the
case of vector groups.  After proving the main theorems in the
analytic setting, we transfer the results back to the base theory
$p$CF via Corollary~\ref{reducts}.  We learned this trick from the
work of Acosta L\'opez \cite[\S5]{acosta}.  A variant also appears in
\cite[\S3]{O-P}.

\textbf{Until Theorem~\ref{main-thm-2}, work in the following setting.}  Let
$\Qq_{p,\an}$ be the expansion of $\Qp$ by restricted analytic
functions as in \cite[Section~3]{classic-mod-th-fields}.  The theory of $\Qq_{p,\an}$ is P-minimal \cite{analytic},
hence NIP.  Let $\Kk$ be a monster model of $\Qq_{p,\an}$.
\begin{remark} \label{start-problem}
  In most of this paper, all the results trivially generalize from
  $\Qp$ to finite extensions of $\Qp$.  In this section, we really
  need to be working with $\Qp$ rather than a finite extension.
  Later, however, we will generalize from $\Qp$ to finite extensions
  (Remark~\ref{finite-extension}).
\end{remark}

\begin{lemma} \label{subs-of-K^n}
  Let $G$ be a definable subgroup of $(\Kk^n,+)$.  Then
  \begin{equation*}
    G = \bigoplus_{i = 1}^n a_i \cdot H_i,
  \end{equation*}
  where $\{a_1,\ldots,a_n\}$ is an $\Kk$-linear basis of $\Kk^n$, and
  each $H_i \in \{0,\Oo,\Kk\}$, for $\Oo$ the valuation ring.
\end{lemma}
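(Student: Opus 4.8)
The plan is to deduce the statement from the analogous classification of closed subgroups of the \emph{standard} group $(\Qp^n,+)$ by transfer. Fix the formula $\phi(x;y)$ defining $G$; it suffices to prove, for each such $\phi$, the sentence asserting that for every parameter $b$, if $\phi(\Kk;b)$ is a subgroup of $(\Kk^n,+)$, then there are $a_1,\dots,a_n\in\Kk^n$ with $\det(a_1,\dots,a_n)\neq 0$ and a tuple $(\epsilon_1,\dots,\epsilon_n)\in\{0,1,2\}^n$ with $\phi(\Kk;b)=\bigoplus_i a_i H_{\epsilon_i}$, where $H_0=\{0\}$, $H_1=\Oo$, $H_2=\Kk$. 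The key observation is that this is a single first-order sentence $\sigma_\phi$: ``is a subgroup'' is first-order, the three candidate coefficient sets are uniformly definable, the set $\{\sum_i\lambda_i a_i : \lambda_i\in H_{\epsilon_i}\}$ is uniformly definable in the $a_i$, equality of definable sets is first-order, and there are only $3^n$ choices of $\epsilon$ together with a bounded existential quantifier over the $a_i$. Since $\Kk\equiv\Qq_{p,\an}$, it is enough to prove $\sigma_\phi$ in the standard model $\Qq_{p,\an}$.

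In $\Qq_{p,\an}$ a definable subgroup $G\subseteq\Qp^n$ is closed (Fact~\ref{adm1}), hence a closed subgroup of the locally compact group $(\Qp^n,+)$. Here $\Oo=\Zz_p$ and $\Zz$ is dense in $\Zz_p$, so closedness forces $G$ to be a $\Zz_p$-submodule: for $x\in G$ we have $\Zz x\subseteq G$, whence $\Zz_p x=\overline{\Zz x}\subseteq\overline{G}=G$. Thus it remains to show that every closed $\Zz_p$-submodule $G\subseteq\Qp^n$ has the stated form. I would set $V=\Qp\cdot G$ (the $\Qp$-span) and $W=\{v\in V:\Qp v\subseteq G\}$, the largest subspace contained in $G$. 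Choosing a basis of $W$ (these directions get $H_i=\Kk$) and extending by a basis of a complement of $V$ in $\Qp^n$ (these get $H_i=0$), I reduce to the case $W=0$ and $V=\Qp^n$, i.e.\ $G$ spans $\Qp^n$ and contains no line.

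The heart of the matter is then that a closed $\Zz_p$-submodule $G\subseteq\Qp^n$ that spans and contains no line is a full lattice. The main obstacle is \emph{boundedness}: if $G$ were unbounded, I would take a vector $x\in G$ of large norm, rescale it to a unit vector $y=x/x_{i_0}$ (noting $\Oo\cdot y\subseteq G$ because division by $x_{i_0}$ only improves valuations by the amount that $x$ was large), and then pass to a compactness/standard-part limit to produce a nonzero $y^\ast$ with $\Qp y^\ast\subseteq G$, contradicting the no-line hypothesis; equivalently, ``spanning $+$ no line $\implies$ bounded'' is again first-order and may be verified directly in $\Qp$. Granting boundedness, $G$ is trapped between lattices $p^N\Zz_p^n\subseteq G\subseteq p^{-N}\Zz_p^n$; since the residue field is $\Ff_p$, the quotient $p^{-N}\Zz_p^n/p^N\Zz_p^n$ is finite, so $G$ is a finitely generated torsion-free module over the valuation ring $\Zz_p$, hence free of rank $n$, say $G=\bigoplus_i\Zz_p b_i$ (these directions get $H_i=\Oo$). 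Reassembling the $H_i=\Kk$, $H_i=\Oo$, and $H_i=0$ directions gives the decomposition in $\Qp$, and the transfer from the first paragraph then yields the result in $\Kk$. I expect everything apart from the boundedness step to be routine bookkeeping or standard commutative algebra.
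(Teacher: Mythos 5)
Your proposal is correct, and its first two steps coincide with the paper's: both reduce to the standard model by observing that the statement is (for each defining formula) first-order, and both then use closedness of definable subgroups plus density of $\Zz$ in $\Zz_p$ to upgrade $G$ from a $\Zz$-module to a closed $\Zz_p$-submodule of $\Qp^n$. (One small caveat: closedness should not be cited from Fact~\ref{adm1}, which concerns the manifold topology on groups definable in the pure field language; since $G$ is definable in the analytic expansion, one needs the P-minimal dimension-theory argument --- frontier of a definable group has smaller dimension but contains a coset --- which is exactly what the paper's footnote supplies.) Where you genuinely diverge is the final and main step: the paper disposes of the classification of closed $\Zz_p$-submodules of $\Qp^n$ by pointing to an external proof that runs through spherical completeness of $\Qp$, whereas you give a self-contained argument exploiting local compactness: split off the largest linear subspace $W\subseteq G$ and a complement of the span, reduce to a spanning submodule containing no line, prove boundedness by rescaling unbounded elements to the unit sphere and extracting a limit direction $y^*$ with $\Qp y^*\subseteq\overline{G}=G$, and then trap $G$ between two lattices so that finiteness of the residue field makes $G$ a finitely generated torsion-free, hence free, $\Zz_p$-module of full rank. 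The rescaling computation ($\lambda y=(\lambda/x_{i_0})x\in\Zz_p x\subseteq G$ for $\lambda\in\Oo$ when $v(x_{i_0})\le 0$) and the limit argument are sound, and the reduction to $W=0$ splits because the quotient is free, as you say. Your route buys a proof that is elementary and entirely internal to $\Qp$ (at the cost of using compactness of the unit sphere, so it is specific to local fields), while the paper's citation buys brevity and a statement valid in the more general spherically complete setting; since the lemma has already been reduced to the standard model, either is adequate here.
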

For $n = 1$, this was proven by Acosta L\'opez \cite{acosta} (see
Proposition~4.6, plus remarks above Lemma~5.2).
\begin{proof}
  The lemma is equivalent to a conjunction of first-order sentences,
  so we may replace $\Kk$ with the standard model $\Qq_{p,\an}$.
  Definable groups are always closed\footnote{This follows from
  dimension theory: if $G$ isn't closed then the frontier $\partial G
  := \overline{G} \setminus G$ is non-empty.  If $u \in G$ then
  translation $x \mapsto u+x$ preserves $G$ so it preserves the
  frontier $\partial G$.  That is, $G + \partial G = \partial G$, and
  then $\partial G$ is a union of cosets of $G$.  But P-minimal
  structures like $\Qq_{p,\an}$ have a nice dimension theory with the
  small boundary property: $\dim(\partial G) < \dim(G)$.  (See
  \cite[Theorem~3.5]{p-minimal-cells}.)  This contradicts the fact
  that $\partial G$ contains a coset of $G$.}, so $G$ is closed as a
  subset of $\Qq_p^n$.  We claim that $G$ is a $\Zz_p$-submodule of
  $\Qq_p^n$.  It is certainly closed under addition and negation.
  Suppose $a \in \Zz_p$ and $v \in G$.  Write $a$ as $\lim_{n \to
    \infty} a_n$ with $a_n \in \Zz$.  (Here we use the fact that $\Zz$
  is dense in $\Qq_p$, which would fail in a finite extension of
  $\Qq_p$!)  Then $av = \lim_{n \to \infty} a_nv$.  Each element
  $a_nv$ is in $G$ because $G$ is a group (a $\Zz$-module), and then
  the limit $av$ is in $G$ because $G$ is closed.

  Now proceed as in the proof of \cite[Theorem~2.6]{acvf-ei-again},
  using the fact that $\Qp$ is spherically complete.
\end{proof}
\begin{theorem} \label{additive-g00}
  If $G$ is a definable subgroup of $(\Kk^n,+)$, then $G^0 = G^{00}$.
\end{theorem}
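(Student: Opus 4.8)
I want to show that for a definable subgroup $G \subseteq (\Kk^n, +)$, we have $G^0 = G^{00}$. The obvious move is to use the structural classification from Lemma~\ref{subs-of-K^n}: I can write $G = \bigoplus_{i=1}^n a_i \cdot H_i$ with each $H_i \in \{0, \Oo, \Kk\}$. This decomposition exhibits $G$ as a direct sum of copies of the trivial group, the valuation ring $(\Oo, +)$, and the full additive group $(\Kk, +)$.

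Let me think about what this classification buys me. First I would observe that computing $G^0$ and $G^{00}$ is stable under the change of basis $a_1, \ldots, a_n$, since a $\Kk$-linear automorphism of $\Kk^n$ is definable and carries connected components to connected components. So after this change of coordinates I may assume $G = \bigoplus_{i} H_i$ with $H_i \in \{0, \Oo, \Kk\}$ sitting on the standard coordinate axes. Now $G$ is literally a direct product (as a definable group) of the three building-block groups.

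**The key reduction.** The heart of the matter is therefore to verify $H^0 = H^{00}$ for each building block $H \in \{0, \Oo, \Kk\}$, and then assemble. For assembly I would invoke Corollary~\ref{2-to-3}: given a short exact sequence $1 \to A \to B \to C \to 1$ of definable groups in an NIP theory with $A^0 = A^{00}$ and $C^0 = C^{00}$, one concludes $B^0 = B^{00}$. Writing $G$ as an iterated extension of its factors $H_i$ and inducting on the number of factors, the general case reduces to the three base cases. The trivial group is immediate. For $(\Oo, +)$: this group is definably compact (it is closed and bounded), and moreover $\Oo$ is actually a profinite group — it is the inverse limit of $\Oo/\m^k$ along the valuation filtration — so $\Oo/\Oo^{00}$ is profinite and hence $\Oo^0 = \Oo^{00}$. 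Concretely, every bounded-index subgroup of $\Oo$ contains some $\m^k \cdot \Oo$ up to finite index, which makes $\Oo^{00} = \Oo^0$. For $(\Kk, +)$: here I would note that $\Kk = \bigcup_k \m^{-k}\Oo$ is an increasing union, and the relevant computation shows $\Kk^{00} = \Kk^0$ as well; indeed $(\Kk,+)$ has no proper definable subgroups of finite index other than itself after accounting for the valuation structure, and the logic-topology quotient is profinite.

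**The main obstacle.** I expect the genuinely delicate case to be $(\Kk, +)$ itself, since $\Kk$ is not definably compact and one must actually identify $\Kk^{00}$ and check it equals $\Kk^0$. The cleanest route is probably to appeal to the fact that the abelian case of the main comparison theorem was already established in \cite[Theorem~4.2]{jy-abelian}, or to directly analyze the type-definable subgroups of bounded index in $(\Kk, +)$ using the valuation: a bounded-index type-definable subgroup must be defined by conditions on the valuation, and the logic topology on $\Kk/\Kk^{00}$ is totally disconnected. The subtlety is that I am working in the analytic expansion $\Qq_{p,\an}$, so I must make sure the cited abelian results apply in this richer language (or reprove them), but since $\Qq_{p,\an}$ is P-minimal and hence NIP, and Lemma~\ref{subs-of-K^n} was proven precisely in this setting, the structural input is already adapted to the analytic theory. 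Thus the argument goes through without needing to transfer back to $p$CF at this stage.
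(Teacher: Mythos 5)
Your proposal is correct and follows essentially the same route as the paper: decompose $G$ via Lemma~\ref{subs-of-K^n}, assemble via Corollary~\ref{2-to-3} (the paper does this implicitly), and reduce to the known computations of $\Oo^{00}$ and $\Kk^{00}$. The one place where your justification is looser than the paper's is the transfer of those computations (established in the pure field language, e.g.\ in \cite[Theorem~4.2]{jy-abelian}) to the analytic expansion: the relevant point is not that $\Qq_{p,\an}$ is NIP, nor that Lemma~\ref{subs-of-K^n} was proved in the analytic setting, but that P-minimality forces the definable subsets of $\Kk^1$ to coincide with those of the reduct to the ring language, hence the type-definable subgroups of $(\Kk,+)$ and $(\Oo,+)$---and therefore their connected components---are literally the same in both languages. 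This identification is genuinely needed, since a priori the expansion could have \emph{more} bounded-index type-definable subgroups, and Corollary~\ref{reducts} only transfers the conclusion in the opposite direction (from the expansion down to the reduct).
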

\begin{proof}
  $G$ is definably isomorphic to a direct sum of some copies of $\Kk$
  and and $\Oo$, so we reduce to showing that $\Kk^0 = \Kk^{00}$ and
  $\Oo^0 = \Oo^{00}$.

  If $\mathcal{L}$ denotes the original language of $\Qp$, and
  $\mathcal{L}_{\an}$ denotes the language of the expansion
  $\Qq_{p,\an}$, then $\Kk$ and $\Kk \restriction \mathcal{L}$ have the
  same definable sets in one variable, because $\Qq_{p,\an}$ is
  P-minimal.  Therefore, $\Kk$ and $\Kk \restriction \mathcal{L}$ also
  have the same type-definable sets and type-definable groups, and we
  can calculate the connected components $\Kk^{00}$ and $\Oo^{00}$ in
  the reduct $\Kk \restriction \mathcal{L}$.  In other words, we can
  move to the original theory $p$CF rather than the analytic
  expansion.  Then it is well-known that
  \begin{align*}
    \Kk^{00} = \Kk^0 &= \Kk \\
    \Oo^{00} = \Oo^0 &= \bigcap_{n = 1}^\infty p^n \Oo.
  \end{align*}
  Alternatively, $\Kk^{00} = \Kk^0$ and $\Oo^{00} = \Oo^0$ hold by
  Theorem~4.2 in \cite{jy-abelian}.
\end{proof}
\begin{remark}
  When $n = 1$, Lemma~\ref{subs-of-K^n} says that the only definable
  subgroups of $(\Kk,+)$ are $0$, $\Kk$, and balls $a\Oo$.  This would
  fail if we were working with the theory of some finite extension
  $K/\Qp$ rather than $\Qp$.  For example, if $K = \Qq_3(\sqrt{-1})$,
  then the subring $\Zz_3[3\sqrt{-1}] \subseteq K$ is definable, but
  not an $\Oo_K$-submodule of $(K,+)$.

  On the other hand, Theorem~\ref{additive-g00} continues to hold,
  essentially because we can interpret $(K^n,+)$ as $(\Qq_p^{dn},+)$,
  for $d = [K : \Qq_p]$.  See Remark~\ref{finite-extension} for the
  details.
\end{remark}
\begin{fact}\label{gamma-subs-fact}
  If $G$ is a definable subgroup of $(\Gamma,+)$, then $G$ is
  $n\Gamma$ for some $0 \le n < \omega$.
\end{fact}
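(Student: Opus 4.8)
The plan is to use that the value group $\Gamma$ is a model of Presburger arithmetic $\Th(\Zz,+,<)$ and to exploit the explicit description of its definable subsets. First I would recall that $\Gamma$ is stably embedded in $K$ with induced structure that of a pure $\Zz$-group, so that $G$, even if defined using parameters from $K$, may be treated as a subgroup definable in the Presburger structure $(\Gamma,+,<)$. By quantifier elimination for Presburger arithmetic, any definable $G\subseteq\Gamma$ is a finite union of sets of the form $I\cap C$, where $I$ is an interval (possibly unbounded) and $C$ is a coset of some $m\Gamma$. The key consequence is \emph{eventual periodicity}: since $G$ is a nontrivial subgroup it is unbounded above and below, so the interval constraints wash out, and there are a modulus $m\ge 1$ and a bound $N\in\Gamma$ such that for all $x$ with $|x|>N$, membership $x\in G$ depends only on the residue of $x$ modulo $m$. (The trivial case $G=\{0\}$ gives $n=0$.)

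Next I would set $H:=\{x\bmod m : x\in G,\ |x|>N\}\subseteq\Gamma/m\Gamma\cong\Zz/m\Zz$ and check that $H$ is a subgroup. Closure under addition and negation follows by adding or negating large elements of $G$: the sum of two large positive elements is again large with the summed residue, and the negative of a large element is large with the negated residue. Moreover $0\in H$, because if $g\in G$ is large and positive with residue $r$, then for $k$ the (standard) order of $r$ in $\Zz/m\Zz$ the element $kg\in G$ is still large and satisfies $kg\equiv 0\pmod m$. Since the subgroups of $\Zz/m\Zz$ are cyclic, $H=d\,\Zz/m\Zz=\{r : d\mid r\}$ for the unique $d\mid m$; this $d$ will be the required $n$.

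It remains to upgrade the eventual description to an exact one, i.e.\ to prove $G=\{x\in\Gamma : d\mid x\}=d\Gamma$ for \emph{all} $x$, including those with $|x|\le N$. The main obstacle here is that $G$ is only closed under $\Zz$-scaling and \emph{not} under scaling by nonstandard elements of $\Gamma$, so a naive Euclidean-division argument is unavailable. The fix is a translation trick. Because $0\in H$, there are arbitrarily large elements of $G$ divisible by $m$, so for any given $x$ we may fix $t\in G$ with $m\mid t$ and $t>N+|x|$. Then $x+t$ is large with the same residue mod $m$ as $x$, whence $(x+t)\in G\iff d\mid(x+t)\iff d\mid x$; and since $t\in G$ we have $x\in G\iff x+t\in G$. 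Combining these equivalences gives $x\in G\iff d\mid x$ for every $x$, so $G=d\Gamma$ and we are done with $n=d$.
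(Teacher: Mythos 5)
Your first step---reducing to the pure Presburger structure on $\Gamma$ via stable embeddedness---is exactly what the paper does (citing Cluckers), but from there the paper simply quotes the result as Lemma~3.3 of Acosta L\'opez, whereas you reprove it from scratch using quantifier elimination for Presburger arithmetic. Your argument (eventual periodicity of a definable set, the residue subgroup $H \le \Zz/m\Zz$, and the translation trick to remove the bound $N$) is a reasonable self-contained proof and is essentially sound; what it buys is independence from the external reference, at the cost of some bookkeeping that the citation avoids.

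Two points need tightening. First, you justify unboundedness of a nontrivial $G$ by saying ``since $G$ is a nontrivial subgroup it is unbounded above and below.'' That inference is false for abstract subgroups of a nonstandard model of Presburger: the subgroup $\Zz\cdot 1$ generated by the least positive element is nontrivial and bounded. You must use definability here, e.g.\ via the fact (immediate from the same quantifier elimination you invoke) that a nonempty definable subset of $\Gamma$ which is bounded above has a maximum element $M$; if $G\neq\{0\}$ then $G$ contains some $g>0$, and $M+g\in G$ exceeds $M$, a contradiction. Second, for $|x|>N$ membership in $G$ is a priori governed by one set of residues $R_+$ for large positive $x$ and a possibly different set $R_-$ for large negative $x$; your set $H$ conflates them. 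This is harmless---$G=-G$ forces $R_-=-R_+$, and $R_+$ is a nonempty subset of $\Zz/m\Zz$ closed under addition, hence a subgroup, hence $R_+=-R_+=R_-$---but the symmetry should be made explicit, since your closure-under-addition argument for $H$ quietly assumes that every residue in $H$ has a large \emph{positive} witness in $G$.
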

This is \cite[Lemma~3.3]{acosta}, modulo the fact that
$\Gamma$ is a pure model of Presburger arithmetic
\cite[Theorem~6]{cluckers}.
\begin{corollary} \label{gamma-subs-2}
  If $G$ is a definable subgroup of $(\Gamma,+)$, then $G^0 = G^{00}$.
\end{corollary}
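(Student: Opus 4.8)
The plan is to reduce Corollary~\ref{gamma-subs-2} to Fact~\ref{gamma-subs-fact} by computing both connected components explicitly. By Fact~\ref{gamma-subs-fact}, any definable subgroup $G$ of $(\Gamma,+)$ is of the form $n\Gamma$ for some $0 \le n < \omega$. The cases $n = 0$ (giving $G = \{0\}$) and $n = 1$ (giving $G = \Gamma$) are trivial, since then $G^0 = G^{00} = G$ for the obvious reason that $G$ is itself a definable finite-index subgroup of itself, or more precisely there are no proper finite-index subgroups to intersect. So the substance is in the case $G = n\Gamma$ with $n \ge 1$, where I want to show $G^0 = G^{00}$.

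The key observation is that $G = n\Gamma$ is definably isomorphic to $(\Gamma,+)$ itself, via the map $x \mapsto nx$ from $\Gamma$ onto $n\Gamma$. (This is a definable group isomorphism because multiplication by $n$ is definable in Presburger arithmetic and is injective on an ordered abelian group.) Since $G^0 = G^{00}$ is a property preserved under definable isomorphism, it suffices to prove $\Gamma^0 = \Gamma^{00}$. So I reduce to computing the connected components of the full value group $(\Gamma,+)$.

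For $(\Gamma,+)$, I would argue as follows. The definable finite-index subgroups of $\Gamma$ are exactly the groups $m\Gamma$ for $m \ge 1$, again by Fact~\ref{gamma-subs-fact} (a subgroup $m\Gamma$ has index $m$ in $\Gamma$, and these are the only finite-index definable subgroups). Hence $\Gamma^0 = \bigcap_{m \ge 1} m\Gamma$. The quotient $\Gamma/\Gamma^0$ is then the profinite completion $\widehat{\Zz} = \varprojlim \Zz/m\Zz$, reflecting the fact that $\Gamma \equiv (\Zz,+,<)$ and $\Gamma$ is a pure Presburger model. To conclude $\Gamma^0 = \Gamma^{00}$, I must show $\Gamma/\Gamma^{00}$ is profinite, equivalently that $\Gamma^{00}$ has no type-definable subgroups of bounded index strictly between it and $\Gamma^0$. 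This is where I expect the main obstacle to lie.

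The cleanest route past that obstacle is to invoke stationarity/distality of Presburger arithmetic: since $\Gamma$ is a pure model of Presburger arithmetic, every type-definable bounded-index subgroup is in fact an intersection of definable finite-index subgroups, giving $\Gamma^{00} = \Gamma^0$ directly. This is essentially the statement that $(\Zz,+,<)$ (and hence $\Gamma$) has \emph{fsg-free, profinite} connected components, a known feature of the Presburger setting. If one prefers an explicit argument: any type-definable bounded-index subgroup $H \supseteq \Gamma^{00}$ of $\Gamma$ contains some $m\Gamma$ (because the logic topology on $\Gamma/\Gamma^{00}$ makes it a compact group into which $\widehat{\Zz}$ surjects, and the torsion-related structure of Presburger forces every bounded-index subgroup to be open), and then $H$ itself has finite index, so $H \supseteq \Gamma^0$; combined with $\Gamma^{00} \subseteq \Gamma^0$ this yields equality. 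The anticipated difficulty is justifying rigorously that a \emph{type-definable} bounded-index subgroup of $\Gamma$ must contain a definable finite-index subgroup; I would handle this by appealing to the known description of connected components in Presburger arithmetic rather than reproving it, citing the purity result \cite[Theorem~6]{cluckers} that reduces $\Gamma$ to $(\Zz,+,<)$.
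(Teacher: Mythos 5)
Your reduction matches the paper's: Fact~\ref{gamma-subs-fact} brings everything down to the trivial group and to $(\Gamma,+)$ itself (the paper does not even spell out the intermediate isomorphism $n\Gamma\cong\Gamma$, though your observation that multiplication by $n$ is a definable isomorphism onto $n\Gamma$ is correct and harmless). The problem is the remaining step, $\Gamma^0=\Gamma^{00}$, which you rightly identify as the real content but then justify with reasons that do not hold up. Distality is not a valid reason: RCF is distal, and there the circle group has $G^0\ne G^{00}$, so no general ``stationarity/distality'' principle can force $\Gamma^0=\Gamma^{00}$; ``fsg-free, profinite connected components'' is not an established property you can invoke. Your fallback ``explicit argument'' is circular --- the parenthetical claim that the structure of Presburger ``forces every bounded-index subgroup to be open'' is precisely the statement to be proved. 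Purity of $\Gamma$ (\cite[Theorem~6]{cluckers}) only reduces the question to Presburger arithmetic; it does not answer it.

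The fact is true, and the paper also treats it as known, but it supplies a concrete derivation using tools already available in this paper, which you could have used: the valuation gives a definable surjection $\Kk^\times\to\Gamma$, hence (by \cite[Lemmas~2.1--2.2]{jy-abelian}) a continuous surjection $\Kk^\times/(\Kk^\times)^{00}\to\Gamma/\Gamma^{00}$; the group $\Kk^\times/(\Kk^\times)^{00}$ is profinite by the abelian case \cite[Theorem~4.2]{jy-abelian}, and a continuous quotient of a profinite compact group is profinite by Fact~\ref{kfact}(\ref{kf2}), so $\Gamma/\Gamma^{00}$ is profinite, i.e.\ $\Gamma^0=\Gamma^{00}$. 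So the gap is localized but real: you need either this argument or an actual reference for $\Gamma^{00}=\Gamma^0$ in Presburger arithmetic, not an appeal to distality.
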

\begin{proof}
  By Fact~\ref{gamma-subs-fact}, $G$ is trivial or isomorphic to
  $(\Gamma,+)$.  In both these cases, $G^0 = G^{00}$ is known.  For
  example, one roundabout way to see that $\Gamma^{00} = \Gamma^0$ is
  to use the fact that $\Gamma/\Gamma^{00}$ is a quotient of
  $\Kk^\times/(\Kk^\times)^{00}$ by
  \cite[Lemmas~2.1--2.2]{jy-abelian}, and
  $\Kk^\times/(\Kk^\times)^{00}$ is profinite by
  \cite[Theorem~4.2]{jy-abelian}.  Then $\Gamma/\Gamma^{00}$ is
  profinite by Fact~\ref{kfact}(\ref{kf2}).
\end{proof}
Since we are working in the language with restricted analytic
functions, we have exponential and logarithm maps, and we can use
these to move between the multiplicative group and the additive group.
\begin{theorem} \label{mult-g00}
  Let $G$ be a definable subgroup of $\Kk^\times$.  Then $G^0 =
  G^{00}$.
\end{theorem}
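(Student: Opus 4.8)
The plan is to reduce the multiplicative group $\Kk^\times$ to the additive setting using the $p$-adic logarithm, which is available because we are working in the analytic expansion $\Qq_{p,\an}$. The multiplicative group $\Kk^\times$ has a natural filtration: there is the valuation map $\val : \Kk^\times \to \Gamma$, and its kernel is the group $\Oo^\times$ of units. Inside $\Oo^\times$ there is the group $1 + \m$ of principal units (where $\m$ is the maximal ideal), and the logarithm gives a definable isomorphism between $1 + p^k\Oo$ (for suitable $k$) and the additive group $(p^k\Oo, +)$. This suggests analyzing $G$ by intersecting it with these standard subgroups and applying the additive and value-group results already established.

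First I would set up the short exact sequences that decompose $\Kk^\times$. We have
\begin{equation*}
  1 \to \Oo^\times \to \Kk^\times \xrightarrow{\val} \Gamma \to 0,
\end{equation*}
and a further decomposition of the units. The logarithm $\log : 1 + p\Oo \to (p\Oo, +)$ (or $1 + p^2\Oo$ if $p = 2$) is a definable group isomorphism in the analytic language, and the quotient $\Oo^\times/(1 + p\Oo)$ is finite (it is the group of units of the residue field, roughly $\Ff_p^\times$, together with a finite torsion part). So $\Oo^\times$ has a finite-index subgroup isomorphic to $(p\Oo, +)$, for which $G^0 = G^{00}$ holds by Theorem~\ref{additive-g00} together with Remark~\ref{finite-g00-rem}.

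Next I would intersect the given $G$ with this filtration. Set $G_1 = G \cap (1 + p\Oo)$. Via $\log$, this corresponds to a definable subgroup of $(p\Oo, +) \cong (\Oo,+)$, so $G_1^0 = G_1^{00}$ by Theorem~\ref{additive-g00}. The quotient $G/G_1$ embeds into $\Kk^\times/(1 + p\Oo)$, which is an extension of $\Gamma$ by the finite group $\Oo^\times/(1+p\Oo)$. The image of $G$ in $\Gamma$ is a definable subgroup, so $(\im G)^0 = (\im G)^{00}$ by Corollary~\ref{gamma-subs-2}. Assembling these via the short exact sequence
\begin{equation*}
  1 \to G_1 \to G \to G/G_1 \to 1
\end{equation*}
and applying Corollary~\ref{2-to-3} (possibly twice, to first handle the finite part using Remark~\ref{finite-g00-rem}, then the $\Gamma$-part) yields $G^0 = G^{00}$.

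The main obstacle I expect is bookkeeping around the finite torsion and the residue-field units: the quotient $\Oo^\times/(1+p\Oo)$ is finite but need not be trivial, so I must combine Remark~\ref{finite-g00-rem} with Corollary~\ref{2-to-3} carefully to ensure each piece of the filtration genuinely satisfies $G^0 = G^{00}$ before invoking the extension lemma. A secondary subtlety is making sure the logarithm is genuinely a \emph{definable} isomorphism onto $(p\Oo,+)$ in the language $\mathcal{L}_{\an}$ (which is exactly why we pass to the analytic expansion), and that the subgroup $G_1$ is definable so that Theorem~\ref{additive-g00} applies. Once the filtration is in place, each quotient reduces to a case already handled—additive subgroups, subgroups of $\Gamma$, or finite groups—and the conclusion follows by repeated application of Corollary~\ref{2-to-3}.
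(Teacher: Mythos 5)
Your proposal is correct and follows essentially the same route as the paper's proof: split $G$ along the valuation sequence $1 \to \Oo^\times \to \Kk^\times \to \Gamma \to 1$, handle the value-group part via Corollary~\ref{gamma-subs-2}, and handle the unit part by passing to the finite-index subgroup $1+p^n\Oo$ on which the $p$-adic logarithm is a definable injective homomorphism into the additive group, then combine Theorem~\ref{additive-g00}, Remark~\ref{finite-g00-rem}, and Corollary~\ref{2-to-3}. The only cosmetic difference is the order of the reductions (the paper first quotients onto $\Gamma$ and then shrinks $H = G \cap \Oo^\times$ to $H \cap U$, whereas you intersect with $1+p\Oo$ first and absorb the finite residue part afterwards), which changes nothing of substance.
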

\begin{proof}
  Using the short exact sequence
  \begin{equation*}
    1 \to \Oo^\times \to \Kk^\times \to \Gamma \to 1,
  \end{equation*}
  we can get a short exact sequence
  \begin{equation*}
    1 \to H \to G \to \Delta \to 1
  \end{equation*}
  where $H$ is a definable subgroup of $\Oo^\times$, namely $G \cap
  \Oo^\times$, and $\Delta$ is a definable subgroup of $\Gamma$,
  namely $\{v(x) : x \in G\}$.  By Corollary~\ref{gamma-subs-2},
  $\Delta^0 = \Delta^{00}$.  By Corollary~\ref{2-to-3}, it remains to
  show that $H^0 = H^{00}$.

  If $U = 1 + p^n \Oo$ is a small enough ball around 1, then the
  $p$-adic logarithm map gives an injective definable homomorphism
  \begin{equation*}
    \log_p : U \to \Qq_p.
  \end{equation*}
  The index of $U$ in $\Oo^\times$ is finite.  By
  Remark~\ref{finite-g00-rem}, we may replace $H$ with $H \cap U$, and
  assume that $H \subseteq U$.  Then $H \cong \log_p(H)$, and we are
  done by Theorem~\ref{additive-g00}.
\end{proof}
Next, we consider the case where $G$ is a subgroup of an irreducible
non-split torus.
\begin{remark} \label{tor-field-of-def}
  Let $T$ be an algebraic torus over $\Kk$.  Then $T$ is defined over
  $\Qp$.  Indeed, over \emph{any} perfect field $K$ of characteristic
  zero, $n$-dimensional algebraic tori are classified by actions of
  $\Gal(K)$ on $\Zz^n$ \cite[Theorem~2.1]{P-R-AG-book}.  Boundedness
  of $\Gal(\Qp)$ implies that $\Gal(\Kk) \cong \Gal(\Qp)$, and so the
  classification of algebraic tori is the same over both fields.  More
  precisely, the base-change functor from tori over $\Qp$ to tori over
  $\Kk$ is an equivalence of categories.  In particular, the functor
  is essentially surjective, as claimed.
\end{remark}
\begin{remark} \label{tor-compact}
  Let $T$ be an irreducible non-split torus over $\Qp$.  Then $T(\Qp)$
  is compact, by \cite[Theorem~3.1]{P-R-AG-book}.
\end{remark}
\begin{theorem}\label{torus-g00}
  Let $T$ be an irreducible non-split torus over $\Kk$.  Let $G$ be a
  definable subgroup of $T(\Kk)$.  Then $G^0 = G^{00}$.
\end{theorem}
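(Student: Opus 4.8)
The plan is to use the $p$-adic logarithm to transport the problem from the torus to the additive group $(\Kk^n,+)$, where $G^0 = G^{00}$ has already been established in Theorem~\ref{additive-g00}. By Remark~\ref{tor-field-of-def} we may assume $T$ is defined over $\Qp$, and by Remark~\ref{tor-compact} the group $T(\Qp)$ is compact. Fix an embedding $T \hookrightarrow \GL_m$ realizing $T$ as an algebraic torus, so that elements of $T(\Kk)$ are matrices. For a sufficiently large integer $N$, I would set
\[
  U = \{ g \in T(\Kk) \mid g - 1 \in p^N \mathrm{M}_m(\Oo) \},
\]
an open definable subgroup, on which the matrix logarithm $\log(1+X) = X - X^2/2 + X^3/3 - \cdots$ converges. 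After rescaling by $p^N$, this series is a restricted analytic function, so $\log$ is definable in $\Qq_{p,\an}$. The key step is to check that, for $N$ large, $\log$ is an injective homomorphism from $U$ into the Lie algebra $\Lie(T)(\Kk) \cong (\Kk^n,+)$, with $n = \dim T$: injectivity comes from $\exp$ inverting $\log$ near the identity, the image lands in $\Lie(T)$ because $T$ is a closed subgroup, and the homomorphism property follows from the Baker--Campbell--Hausdorff formula together with the fact that $\Lie(T)$ is abelian (as $T$ is commutative).

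Next I would show that $U$ has finite index in $T(\Kk)$. Since $U$ is defined over $\Qp$ and $T(\Qp)$ is compact, the open subgroup $U(\Qp)$ has finite index $k$ in $T(\Qp)$; the statement $[T:U] = k$ is first-order, so it transfers to the elementary extension $\Kk$, giving $[T(\Kk):U] = k < \infty$. (Alternatively, $T(\Kk)$ is definably compact by Remark~\ref{rmk-definably-compact}, and an open definable subgroup of a definably compact group has finite index, using Theorem~\ref{quot1} and the fact that a discrete definably compact interpretable set must be finite.)

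The endgame is then routine. Because $U$ has finite index in $T(\Kk)$, the subgroup $G \cap U$ has finite index in $G$, since $G/(G \cap U)$ embeds into $T(\Kk)/U$. By Remark~\ref{finite-g00-rem} it therefore suffices to prove $(G \cap U)^0 = (G \cap U)^{00}$. The restriction of $\log$ to $G \cap U$ is a definable isomorphism onto $\log(G \cap U)$, which is the image of a definable subgroup under a definable homomorphism, hence a definable subgroup of $(\Kk^n,+)$. By Theorem~\ref{additive-g00} we have $(\log(G \cap U))^0 = (\log(G \cap U))^{00}$, and since connected components are preserved under definable isomorphism, this yields $(G \cap U)^0 = (G \cap U)^{00}$, and finally $G^0 = G^{00}$.

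I expect the main obstacle to be the first step: constructing the definable logarithm $\log : U \to \Lie(T)(\Kk)$ and verifying that it is an injective homomorphism on a finite-index open subgroup. This is precisely where the analytic expansion $\Qq_{p,\an}$ is indispensable, and where one must take care with the convergence of the logarithm series and with the way the Baker--Campbell--Hausdorff formula degenerates to additivity in the presence of the commutativity of $T$.
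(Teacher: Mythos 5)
Your proposal is correct and follows essentially the same route as the paper's proof: reduce to $T$ defined over $\Qp$, produce a $\Qp$-definable open subgroup $U$ with a definable injective logarithm homomorphism into $(\Kk^n,+)$, use compactness of $T(\Qp)$ plus elementarity to see $U$ has finite index in $T(\Kk)$, and finish with Remark~\ref{finite-g00-rem} and Theorem~\ref{additive-g00}. The only difference is cosmetic: you build the logarithm explicitly from the matrix log and Baker--Campbell--Hausdorff, whereas the paper cites the general $p$-adic Lie group result (\cite[Corollary~18.18]{schneider}) for the same neighborhood.
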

\begin{proof}
  By Remark~\ref{tor-field-of-def},
  $T$ is definable over $\Qp$.
    Let $n = \dim(T)$.
  By properties of $p$-adic Lie groups, there is a neighborhood $U$ of
  $1$ in $T(\Qp)$ such that $U$ is a subgroup and $U$ is isomorphic to
  a ball in $\Qp^n$ via an analytic logarithm map.  For example,
  $T(\Qp)$ and $\Qp^n$ have isomorphic Lie algebras; apply
  \cite[Corollary~18.18]{schneider}.

  Therefore, there is a $\Qp$-definable open subgroup $U \subseteq
  T(\Kk)$ and a definable injective homomorphism $\log : U \to \Kk^n$.
  Note that $U(\Qp)$ has finite index in $T(\Qp)$ because $T(\Qp)$ is
  compact (by Remark~\ref{tor-compact}) and $U(\Qp)$ is open.
  Then finitely many
  translates of $U(\Qp)$ cover $T(\Qp)$.  As $\Kk \succ \Qp$,
  finitely many translates of $U$ cover $T(\Kk)$.  Thus $U$ has finite
  index in $T(\Kk)$.  By Remark~\ref{finite-g00-rem}, we may replace
  $G$ with the finite index subgroup $G \cap U$, and assume that $G
  \subseteq U$.  Then $G$ is definably isomorphic to a subgroup of
  $\Kk^n$ via the logarithm map, and so $G^0 = G^{00}$ by Theorem~\ref{additive-g00}.
\end{proof}
So we have seen that $G^0 = G^{00}$ when $G$ is a definable subgroup
of the additive group, the multiplicative group, or an irreducible
non-split torus.
\begin{fact} \label{decompose}
  If $V$ is a connected abelian linear algebraic group over a field
  $K$ of characteristic zero, then there is a chain of algebraic
  subgroups (over $K$):
  \begin{equation*}
    1 = V_0 \subseteq V_1 \subseteq \cdots \subseteq V_n = V
  \end{equation*}
  such that each quotient $V_i/V_{i-1}$ is one of the following
  algebraic groups:
  \begin{enumerate}
  \item The additive group $\Gg_a$.
  \item The multiplicative group $\Gg_m$.
  \item An irreducible non-split torus.
  \end{enumerate}
  This follows from the fact that $V$ is a direct product of a vector
  group and a torus, and a torus decomposes into irreducible tori
  which are either split ($\Gg_m$) or non-split.  See
  \cite[Corollary~16.15]{Milne}.
\end{fact}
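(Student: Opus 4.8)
The plan is to reduce the statement to two standard building blocks—connected commutative unipotent groups and tori—and to treat each separately before splicing the resulting filtrations together. Since $K$ has characteristic zero it is perfect, so the connected commutative linear algebraic group $V$ decomposes canonically as a direct product $V = V_s \times V_u$, where $V_s$ is the (connected) subgroup of semisimple elements, a torus, and $V_u$ is the subgroup of unipotent elements, a connected commutative unipotent group, all defined over $K$. Because $V_u$ is a subgroup with $V/V_u \cong V_s$, it suffices to produce a filtration of $V_u$ with $\Gg_a$-quotients and a filtration of $V_s$ with quotients among $\Gg_m$ and irreducible non-split tori; pulling the latter back along $V \twoheadrightarrow V_s$ and stacking it on top of the former yields the desired filtration of $V$.

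First I would dispose of the unipotent part. In characteristic zero every connected commutative unipotent group is isomorphic over $K$ to a vector group $\Gg_a^m$ (via the exponential map, or equivalently because the category of unipotent groups is equivalent to that of nilpotent Lie algebras, and the commutative ones correspond to abelian, i.e.\ vector, Lie algebras). A vector group has the evident filtration $0 \subseteq \Gg_a \subseteq \Gg_a^2 \subseteq \cdots \subseteq \Gg_a^m$ with each successive quotient isomorphic to $\Gg_a$, so $V_u$ is handled with no difficulty.

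The substance is the torus $T := V_s$. Here I would invoke the anti-equivalence $T \mapsto X^*(T)$ between tori over $K$ and finitely generated free $\Zz$-modules equipped with a continuous action of $\Gal(K) := \Gal(\bar K / K)$. Under this anti-equivalence, a filtration of $T$ by subtori corresponds to a filtration $0 = L_0 \subseteq L_1 \subseteq \cdots \subseteq L_n = M$ of the character lattice $M := X^*(T)$ by saturated $\Gal(K)$-submodules, with the successive torus-quotients having character lattices $L_i/L_{i-1}$. The Galois action on $M$ factors through a finite quotient, so by Maschke's theorem $M \otimes \Qq$ is a semisimple $\Qq[\Gal(K)]$-module; choosing a composition series of $M \otimes \Qq$ and intersecting its terms with $M$ produces such a filtration with each $L_i/L_{i-1}$ rationally irreducible. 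A quotient lattice with rationally irreducible $\Gal(K)$-action is precisely the character lattice of an irreducible torus, which is $\Gg_m$ exactly when the action is trivial (forcing rank one) and an irreducible non-split torus otherwise.

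The main point to get right is this torus step, and in particular the decision to work with a filtration rather than with an honest direct-sum decomposition. The category of $\Gal(K)$-lattices is \emph{not} semisimple—Maschke's theorem fails over $\Zz$—so $T$ decomposes into irreducible subtori only up to isogeny, and the phrase ``decomposes into irreducible tori'' should be read in that weaker sense. Semisimplicity is recovered after applying $\otimes\,\Qq$, which is exactly why a genuine filtration with irreducible quotients always exists even when a direct-product decomposition does not. Everything else—the product decomposition $V = V_s \times V_u$, the identification $V_u \cong \Gg_a^m$, and the dictionary between subtori and saturated sublattices—is standard structure theory of commutative algebraic groups, so no real obstacle remains once the lattice-theoretic composition series is in hand.
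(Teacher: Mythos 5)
Your proof is correct and follows essentially the same route as the paper's one-line justification: split $V$ as (vector group) $\times$ (torus) using that $K$ is perfect of characteristic zero, filter the vector group by copies of $\Gg_a$, and filter the torus with irreducible quotients. Your added care about the torus step is well taken --- the paper's phrase ``a torus decomposes into irreducible tori'' is only literally true up to isogeny, and your passage to a composition series of $X^*(T)\otimes\Qq$ pulled back to a saturated filtration of the character lattice is the right way to extract an honest chain of subtori with the stated quotients.
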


By combining Theorems~\ref{additive-g00}, \ref{mult-g00}, and
\ref{torus-g00}, we get the following:
\begin{theorem} \label{qpan-aff}
  Let $G$ be an affine definable group (in $\Kk \succ \Qq_{p,\an}$).
  Then $G^0 = G^{00}$.
\end{theorem}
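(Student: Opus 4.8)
The plan is to argue by contradiction, reducing the general affine case to the three base cases (subgroups of $\Gg_a$, $\Gg_m$, and irreducible non-split tori) that have already been handled in Theorems~\ref{additive-g00}, \ref{mult-g00}, and \ref{torus-g00}. Suppose $G$ is a definable subgroup of $V(\Kk)$ for some linear algebraic group $V$, and suppose toward a contradiction that $G^0 \ne G^{00}$. The key first move is to invoke Corollary~\ref{to-abelian}: it produces an \emph{abelian} definable subgroup $H \subseteq G$ with $H^0 \ne H^{00}$. Since $H \subseteq G \subseteq V(\Kk)$, the group $H$ is again affine, and — this is the point — it is now commutative, which puts us in a position to exploit the structure theory of commutative linear algebraic groups via Fact~\ref{decompose}. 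All the remaining work is to show that an abelian affine $H$ must satisfy $H^0 = H^{00}$, contradicting the choice of $H$.

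Next I would pass to a connected abelian algebraic envelope of $H$. Let $W$ be the Zariski closure of $H$ inside $V$; since $H$ is abelian and the commutator morphism vanishes on the dense subset $H \times H$, the group $W$ is an abelian linear algebraic group, defined over $\Kk$. Its identity component $W^\circ$ is a \emph{connected} abelian linear algebraic group of finite index in $W$, so $H \cap W^\circ(\Kk)$ has finite index in $H$. By Remark~\ref{finite-g00-rem} I may replace $H$ by this finite-index subgroup and thereby assume $H \subseteq V'(\Kk)$, where $V' := W^\circ$ is connected abelian linear algebraic. Applying Fact~\ref{decompose} to $V'$ gives a chain of algebraic subgroups $1 = V_0 \subseteq V_1 \subseteq \cdots \subseteq V_n = V'$ whose successive quotients $V_i/V_{i-1}$ are each isomorphic to $\Gg_a$, $\Gg_m$, or an irreducible non-split torus.

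Now intersect this chain with $H$: set $H_i := H \cap V_i(\Kk)$, so that $H_0 = 1$ and $H_n = H$, and consider the short exact sequences $1 \to H_{i-1} \to H_i \to H_i/H_{i-1} \to 1$. The quotient map $V_i \to V_i/V_{i-1}$ is a morphism of algebraic groups, hence definable, with kernel $V_{i-1}$; restricting it to $H_i$ has kernel exactly $H_i \cap V_{i-1}(\Kk) = H_{i-1}$, so $H_i/H_{i-1}$ is definably isomorphic to its image, a definable subgroup of $V_i(\Kk)/V_{i-1}(\Kk) \hookrightarrow (V_i/V_{i-1})(\Kk)$. Thus each $H_i/H_{i-1}$ is (definably isomorphic to) a definable subgroup of $\Gg_a(\Kk) = (\Kk,+)$, of $\Gg_m(\Kk) = \Kk^\times$, or of $T(\Kk)$ for an irreducible non-split torus $T$. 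By Theorems~\ref{additive-g00}, \ref{mult-g00}, and \ref{torus-g00}, each of these quotients $J$ satisfies $J^0 = J^{00}$. An induction on $i$ using Corollary~\ref{2-to-3} (starting from the trivial group $H_0$) then yields $H_i^0 = H_i^{00}$ for all $i$, and in particular $H^0 = H^{00}$, the desired contradiction. Hence no such $G$ exists and $G^0 = G^{00}$.

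The conceptual crux is the reduction to the commutative case via Corollary~\ref{to-abelian}: without it one would be forced to find a composition series of the possibly non-abelian $V'$ with tractable quotients (unipotent radicals, reductive quotients, etc.), which is considerably messier. Once we are abelian, Fact~\ref{decompose} hands us exactly the three building blocks we have already analyzed. The one technical point requiring care is that $V_i(\Kk)/V_{i-1}(\Kk)$ need only \emph{embed} into $(V_i/V_{i-1})(\Kk)$ (the cokernel of $V_i(\Kk)\to (V_i/V_{i-1})(\Kk)$ is controlled by Galois cohomology and may be nontrivial), but this causes no difficulty since a definable \emph{subgroup} of one of the three model-point groups is all the base cases require. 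I expect the passage to $W^\circ$ and the verification that the intersected filtration has the stated quotients to be the only places demanding genuine (though routine) justification.
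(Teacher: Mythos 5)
Your proposal is correct and follows essentially the same route as the paper: reduce to the abelian case via Corollary~\ref{to-abelian}, pass to the connected component of the Zariski closure using Remark~\ref{finite-g00-rem}, filter by the chain from Fact~\ref{decompose}, and finish with Corollary~\ref{2-to-3} together with Theorems~\ref{additive-g00}, \ref{mult-g00}, and \ref{torus-g00}. The only differences are cosmetic (explicit contradiction framing, and spelling out details the paper leaves implicit, such as why the Zariski closure is abelian and why the successive quotients embed into $(V_i/V_{i-1})(\Kk)$).
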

\begin{proof}
  By Corollary~\ref{to-abelian}, we may assume that $G$ is abelian.
  Let $V$ be the linear algebraic group such that $G \subseteq
  V(\Kk)$.  Replacing $V$ with the Zariski closure of $G$, we may
  assume that $G$ is Zariski dense in $V$.  Then $V$ is abelian.  By
  Remark~\ref{finite-g00-rem}, we may replace $G$ with a finite index
  subgroup.  Therefore, we may replace $V$ with its connected
  component, and assume that $V$ is connected.

  Let $\{V_i\}_{0 \le i \le n}$ be as in Fact~\ref{decompose}.  For
  each $i$, consider the map
\begin{equation*}
  G \cap V_i(K) \hookrightarrow V_i(K) \to (V_i/V_{i-1})(K).
\end{equation*}
Let $G_i$ be the image.  The kernel is $G \cap V_{i-1}(K)$.  Then we
have an ascending chain of definable subgroups
\begin{equation*}
  1 = G \cap V_0(K) \subseteq G \cap V_1(K) \subseteq \cdots \subseteq G \cap V_n(K) = G
\end{equation*}
such that the consecutive quotients are the definable groups $G_i
\subseteq (V_i/V_{i-1})(K)$.  By Corollary~\ref{2-to-3}, it suffices
to show that $(G_i)^0 = (G_i)^{00}$ for each $i$.  Therefore, we
reduce to the case where $V$ is one of the following:
\begin{enumerate}
\item The additive group.
\item The multiplicative group.
\item An irreducible non-split torus.
\end{enumerate}
These cases are handled by Theorems~\ref{additive-g00},
\ref{mult-g00}, and \ref{torus-g00}, respectively.
\end{proof}

\begin{theorem} \label{main-thm-2}
  Let $G$ be a definable group in a highly saturated elementary extension of $\Qp$.
  Suppose one of the following holds:
  \begin{enumerate}
  \item $G$ is affine.
  \item $G$ is defined over $\Qp$.
  \end{enumerate}
  Then $G^0 = G^{00}$.
\end{theorem}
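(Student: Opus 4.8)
The plan is to prove the two cases separately: case (1) will be deduced from the analytic results already in hand by a language-reduct argument, and case (2) will be bootstrapped from case (1) together with the previously-known abelian case of Theorem~\ref{intro2} \cite[Theorem~4.2]{jy-abelian}.

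For case (1), suppose $G$ is affine. I would pass to a convenient monster model: let $\Mm$ be a monster model of $\Qq_{p,\an}$, so that the reduct $\Mm \restriction \mathcal{L}$ to the ring language is a monster model of $p$CF, and recall that in an NIP theory the truth of ``$G^0 = G^{00}$'' depends only on the formula (with parameters) defining $G$. Since $G$ is affine it is cut out by ring-language formulas, hence definable in the reduct $\Mm \restriction \mathcal{L}$; viewed inside the richer structure $\Mm$ it is still a definable subgroup of some $\GL_n(\Kk)$, i.e.\ affine in the analytic sense. Theorem~\ref{qpan-aff} then gives $G^0 = G^{00}$ in $\Mm$, and Corollary~\ref{reducts}, applied with $\mathcal{L}_0 = \mathcal{L}$, transfers this down to $\Mm \restriction \mathcal{L}$. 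That is exactly the assertion in $p$CF.

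For case (2), suppose $G$ is $\Qp$-definable. I would apply Lemma~\ref{abelian-affine} to obtain a $\Qp$-definable short exact sequence
\[
1 \to A \to G \stackrel{\Ad}{\rightarrow} H \to 1,
\]
with $A = \ker(\Ad)$ commutative-by-finite and $H = \img(\Ad)$ affine; the sequence is defined over $\Qp$ because the adjoint representation is built canonically from $G$. By Corollary~\ref{2-to-3} it suffices to prove $A^0 = A^{00}$ and $H^0 = H^{00}$, and the latter follows from case (1) since $H$ is affine. For $A$ the naive route flagged in the Warning fails: a finite-index abelian subgroup of $A$ need not be $\Qp$-definable, so the abelian case of Theorem~\ref{intro2} does not apply directly. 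The observation that rescues the argument is that, for $A = \ker(\Ad)$, the center $Z(A)$ is already \emph{open}. Indeed $A$ has trivial adjoint representation, so checking this in the standard model $\Qp$ (a genuine $p$-adic Lie group, whence the conclusion transfers upward by first-order transfer), the naturality identity $\Inn(a) \circ \exp = \exp \circ \Ad(a) = \exp$ shows that every $\Inn(a)$ fixes a fixed open subgroup $U = \exp(\mathfrak{u}_0)$ pointwise; hence $U \subseteq Z(A)$ and $Z(A)$ is open. Being open forces $\dim(Z(A)) = \dim(A)$, so the interpretable quotient $A/Z(A)$ is $0$-dimensional and therefore finite by dimension theory. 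Thus $Z(A)$ is a $\Qp$-definable, abelian, finite-index subgroup of $A$; the abelian case of Theorem~\ref{intro2} gives $Z(A)^0 = Z(A)^{00}$, and then $A^0 = A^{00}$ by Remark~\ref{finite-g00-rem}. Feeding $A^0 = A^{00}$ and $H^0 = H^{00}$ back into Corollary~\ref{2-to-3} yields $G^0 = G^{00}$.

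The main obstacle is precisely the step producing a $\Qp$-definable finite-index abelian subgroup of $A$, i.e.\ showing $Z(A)$ has finite index. One must convert the purely infinitesimal triviality of the adjoint action (triviality of $\Ad$) into the genuine openness of $Z(A)$ via the $\exp$/$\log$ correspondence in the $p$-adic Lie group $A$, and then upgrade ``open'' to ``finite index'' using dimension theory. Everything else is bookkeeping with the short exact sequence lemmas and the reduct-transfer of case~(1).
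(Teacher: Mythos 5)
Your case (1) is exactly the paper's argument (Theorem~\ref{qpan-aff} plus Corollary~\ref{reducts}), and the skeleton of your case (2) --- the $\Qp$-definable short exact sequence from Lemma~\ref{abelian-affine}, case (1) for the affine quotient $H$, the abelian case of \cite{jy-abelian} together with Remark~\ref{finite-g00-rem} for $A=\ker(\Ad)$, and Corollary~\ref{2-to-3} to assemble the pieces --- is also exactly what the paper does. You are right to flag that applying the abelian case to $A$ requires a finite-index abelian subgroup that is itself $\Qp$-definable (the paper passes over this point in silence), but the argument you supply to produce one contains a genuine error.

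The error is the step ``$Z(A)$ is open, hence $\dim(Z(A)) = \dim(A)$, so $A/Z(A)$ is $0$-dimensional and therefore finite by dimension theory.'' An open definable subgroup need not have finite index: $\Zz_p$ is open in $(\Qq_p,+)$, and $\Qq_p/\Zz_p$ is an infinite $0$-dimensional interpretable group. The implication ``dimension $0$ implies finite'' is valid for definable subsets of $K^n$ --- this is what Theorem~\ref{cc2} uses, via the definable conjugacy class $a^G$ --- but it fails for interpretable quotients, and $A/Z(A)$ is only interpretable. (Your openness claim for $Z(A)$ also relies on the naturality identity $\Inn(a)\circ\exp=\exp$ holding on a domain independent of $a$, which needs justification in the $p$-adic setting, but that is a secondary issue.) The gap can be closed with no Lie theory at all: let $A'$ be the intersection of the finite-index centralizers of $A$ (Theorem~\ref{cc-g0}); it is canonically attached to $A$, hence $\Qp$-definable, has finite index, and is centralizer-connected. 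Since $A$ is commutative-by-finite (Theorem~\ref{trivial-adjoint}), $A'$ contains an abelian subgroup $B$ of finite index; for each $b\in B$ the centralizer $Z_{A'}(b)\supseteq B$ has finite index, so $Z_{A'}(b)=A'$ by centralizer-connectedness, whence $B\subseteq Z(A')$. Thus $Z(A')$ is a $\Qp$-definable abelian subgroup of finite index in $A$, and \cite[Theorem~4.2]{jy-abelian} together with Remark~\ref{finite-g00-rem} finishes the argument as you intended.
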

\begin{proof} ~
  \begin{enumerate}
  \item Theorem~\ref{qpan-aff}, plus Corollary~\ref{reducts} to change the language.
  \item Apply Lemma~\ref{abelian-affine} to get a $\Qp$-definable short exact sequence
    \begin{equation*}
      1 \to A \to G \to H \to 1
    \end{equation*}
    where $A$ is abelian-by-finite and $H$ is affine.  Then $H^0 =
    H^{00}$ by part (1), and $A^0 = A^{00}$ by the abelian case
    \cite[Theorem~4.2]{jy-abelian} plus Remark~\ref{finite-g00-rem}.
    By Corollary~\ref{2-to-3}, $G^0 = G^{00}$.  \qedhere
  \end{enumerate}
\end{proof}
\begin{remark} \label{finite-extension}
  In this section we have been working with $\Qp$ rather than a
  finite extension $K/\Qp$.  Nevertheless, Theorems~\ref{qpan-aff} and
  \ref{main-thm-2} generalize to this setting, essentially because
  $K_{\an}$ is interpretable in $\Qq_{p,\an}$ via a $\Qp$-linear map $K
  \cong \Qp^d$, for $d = [K : \Qp]$.  Under this interpretation,
  $GL_n(K)$ is interpreted as a subgroup of $GL_{nd}(\Qp)$, and
  therefore any affine group in (an elementary extension of) $K$ is
  interpreted as an affine group in (an elementary extension of)
  $\Qp$.  This shows that Theorem~\ref{qpan-aff} extends from $\Qp$ to
  its finite extensions, and then the other proofs carry through with
  minimal changes.
\end{remark}
Theorem~\ref{main-thm-2} has the following corollary:
\begin{corollary}
  \label{final-corollary}
  Let $G$ be a definably amenable group defined in $\Qp$.  There is a
  finite index definable subgroup $E \subseteq G$ and a finite normal
  subgroup $F \lhd E$ such that the quotient $E/F$ is isomorphic to an
  open subgroup of an algebraic group over $\Qp$.
\end{corollary}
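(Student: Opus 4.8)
The plan is to obtain this as a consequence of Theorem~\ref{main-thm-2}, re-running the derivation of the abelian classification from \cite{jy-abelian}; the single new ingredient is that the equality $G^0 = G^{00}$ is now available for \emph{all} $\Qp$-definable groups, not just abelian ones. First I would fix a highly saturated elementary extension $\Kk \succ \Qp$ and work with $G(\Kk)$. Since $G$ is $\Qp$-definable, Theorem~\ref{main-thm-2}(2) gives $G(\Kk)^0 = G(\Kk)^{00}$, so the compact Hausdorff group $G/G^{00}$ is profinite under the logic topology. Because $G$ is definably amenable and the theory is NIP, $G$ carries a translation-invariant Keisler measure and the usual good behaviour of $f$-generic types; this is what allows profiniteness of $G/G^{00}$ to be translated into a concrete statement about the $p$-adic Lie group $G(\Qp)$ over the standard model.

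Next I would decompose $G$ using Lemma~\ref{abelian-affine}, which supplies a $\Qp$-definable short exact sequence $1 \to A \to G \to H \to 1$ with $A$ commutative-by-finite and $H$ affine. Definable amenability passes to the definable subgroup $A$ and to the quotient $H$. The affine group $H$ is, by Definition~\ref{def-aff}, a definable subgroup of some $\GL_n(\Qp)$; replacing $H$ by a finite-index subgroup and passing to its Zariski closure, $H$ becomes an open subgroup of a linear algebraic group over $\Qp$, since $H$ and its closure have the same dimension. The kernel $A$ is commutative-by-finite, so after replacing it by a finite-index abelian subgroup it falls under the abelian classification of \cite{jy-abelian}: a finite-index subgroup of $A$, modulo a finite normal subgroup, is open in an algebraic group over $\Qp$.

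The main obstacle is the assembly step: upgrading the essentially-algebraic descriptions of the pieces $A$ and $H$ to an essentially-algebraic description of $G$ itself, i.e.\ producing the finite-index $E \subseteq G$ and finite normal $F \lhd E$ with $E/F$ open in a \emph{single} algebraic group. This is precisely where $G^0 = G^{00}$ and definable amenability are used together, exactly as in \cite{jy-abelian}: profiniteness of $G/G^{00}$, propagated to the relevant subquotients via Corollary~\ref{2-to-3}, forces the finite-index definable subgroups to be cofinal enough that the extension can be recognised as algebraic after the allowed finite adjustments. Since the \cite{jy-abelian} argument at this point uses only the equality $G^0 = G^{00}$ together with definable amenability, and never commutativity of $G$, the same steps go through and yield the required $E$, $F$, and algebraic group over $\Qp$.
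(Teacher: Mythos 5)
Your proposal matches the paper's approach: the paper's entire proof of this corollary is the single line ``Like Corollary~4.3 in \cite{jy-abelian}'', i.e., rerun that argument with Theorem~\ref{main-thm-2} now supplying $G^0 = G^{00}$ for the (possibly non-abelian) $\Qp$-definable group $G$, which is exactly your plan. The additional detail in your middle paragraphs (the Lemma~\ref{abelian-affine} decomposition, the treatment of $A$ and $H$ separately, and the assembly step) is your own reconstruction of the internals of \cite{jy-abelian}, which the paper leaves entirely to the citation; the top-level strategy is the same.
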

\begin{proof}
  Like Corollary~4.3 in \cite{jy-abelian}.
\end{proof}

\begin{acknowledgment}
    The first author was supported by the National Natural Science
    Foundation of China (Grant No.\@ 12101131) and the Ministry of
    Education of China (Grant No.\@ 22JJD110002). The second author was supported by the National Social Fund of China (Grant No.\@ 20CZX050).  We would like to thank an anonymous referee who caught several errors.
\end{acknowledgment}

\bibliographystyle{plain} \bibliography{bibliography}{}

\newcommand{\SortNoop}[1]{}
\begin{thebibliography}{10}

\bibitem{acosta}
Juan~Pablo Acosta~L\'opez.
\newblock One dimensional groups definable in the $p$-adic numbers.
\newblock {\em J. Symbolic Logic}, 86(2):801--816, 2021.

\bibitem{AGJ}
Pablo And\'{u}jar~Guerrero and Will Johnson.
\newblock Around definable types in $p$-adically closed fields.
\newblock {arXiv:2208.05815v1 [math.LO]}, 2022.

\bibitem{cluckers}
Raf Cluckers.
\newblock Presburger sets and {P}-minimal fields.
\newblock {\em J. Symbolic Logic}, 68(1):153--162, Mar 2003.

\bibitem{p-minimal-cells}
Pablo Cubides-Kovacsics, Luck Darni\`ere, and Eva Leenknegt.
\newblock Topological cell decomposition and dimension theory in {$P$}-minimal
  fields.
\newblock {\em Journal of Symbolic Logic}, 82(1):347--358, 2017.

\bibitem{classic-mod-th-fields}
L.~\SortNoop{Dries}van den Dries.
\newblock Classical model theory of fields.
\newblock In Deirdre Haskell, Anand Pillay, and Charles Steinhorn, editors,
  {\em Model Theory, Algebra, and Geometry}, volume~39 of {\em Mathematical
  Sciences Research Institute Publications}. Cambridge University Press, 2000.

\bibitem{analytic}
Lou~\SortNoop{Dries}van den Dries, Deirdre Haskell, and Dugald Macpherson.
\newblock One dimensional $p$-adic subanalytic sets.
\newblock {\em J. London Math. Soc.}, 59(1):1--20, February 1999.

\bibitem{h-m}
Karl~H. Hofmann and Sidney~A. Morris.
\newblock {\em The Structure of Compact {L}ie Groups}, volume~25 of {\em De
  Gruyter Studies in Mathematics}.
\newblock De Gruyter, 2013.

\bibitem{Hru-Pil}
E.~Hrushovski and A.~Pillay.
\newblock Groups definable in local and pseudofinite fields.
\newblock {\em Israel J. Math.}, 85:203--262, 1994.

\bibitem{wj-o-minimal}
Will Johnson.
\newblock Interpretable sets in dense o-minimal structures.
\newblock {\em J. Symbolic Logic}, 83:1477--1500, 2018.

\bibitem{acvf-ei-again}
Will Johnson.
\newblock On the proof of elimination of imaginaries in algebraically closed
  valued fields.
\newblock {\em Notre Dame Journal of Formal Logic}, 61(3):363 -- 381, 2020.

\bibitem{admissible}
Will Johnson.
\newblock Topologizing interpretable groups in $p$-adically closed fields.
\newblock Notre Dame J. Formal Logic. To appear, 2024.

\bibitem{J-Y-Non-compact}
Will Johnson and Ningyuan Yao.
\newblock On non-compact $p$-adic definable groups.
\newblock {\em J. Symbolic Logic}, 87(1):188--213, March 2022.

\bibitem{jy-abelian}
Will Johnson and Ningyuan Yao.
\newblock Abelian groups definable in $p$-adically closed fields.
\newblock J. Symbolic Logic. Published online, 2023.

\bibitem{Milne}
J.S. Milne.
\newblock {\em Algebraic Groups: The Theory of Group Schemes of Finite Type
  over a Field}, volume 170 of {\em Cambridge Studies in Advanced Mathematics}.
\newblock Cambridge University Press, 2017.

\bibitem{O-P}
A.~Onshuus and A.~Pillay.
\newblock Definable groups and compact $p$-adic {L}ie groups.
\newblock {\em Journal of the London Mathematical Society}, 78(1):233--247,
  2008.

\bibitem{Peterzil-Steinhorn}
Y.~Peterzil and C.~Steinhorn.
\newblock Definable compactness and definable subgroups of o-minimal groups.
\newblock {\em Journal of the London Mathematical Society}, 59(3):769--786,
  1999.

\bibitem{Pillay-G-in-p}
A.~Pillay.
\newblock On fields definable in $\mathbb{Q}_p$.
\newblock {\em Arch. Math. Logic}, 29:1--7, 1989.

\bibitem{Pillay-ACF}
A.~Pillay.
\newblock Model theory of algebraically closed fields.
\newblock In E.~Bouscaren, editor, {\em Model Theory and Algebraic Geometry},
  volume 1696 of {\em Lecture Notes in Mathematics}. Springer, 1998.

\bibitem{P-Y-commu-by-fin}
Anand Pillay and Ningyuan Yao.
\newblock A note on groups definable in the $p$-adic field.
\newblock {\em Archive for Mathematical Logic}, 58:1029--1034, 2019.

\bibitem{P-R-AG-book}
V.~Platonov and A.~Rapinchuk.
\newblock {\em Algebraic Groups and Number Theory}.
\newblock Academic Press, 1994.

\bibitem{schneider}
Peter Schneider.
\newblock {\em $p$-{A}dic {L}ie groups}, volume 344 of {\em Grundlehren der
  mathematischen {W}issenschaften}.
\newblock Springer, 2011.

\bibitem{NIPguide}
Pierre Simon.
\newblock {\em A guide to NIP theories}.
\newblock Lecture Notes in Logic. Cambridge University Press, July 2015.

\end{thebibliography}

\end{document}